\newtheorem{theorem}{Theorem}[section]
\newtheorem{lemma}[theorem]{Lemma}
\newtheorem{proposition}[theorem]{Proposition}
\newtheorem{lettertheorem}{Theorem}
\newtheorem{letterlemma}[lettertheorem]{Lemma}
\newtheorem{letterproposition}[lettertheorem]{Proposition}
\theoremstyle{definition}
\theoremstyle{remark}
\numberwithin{equation}{section}
\newcommand{\set}[1]{\left\{#1\right\}}
\newcommand{\abs}[1]{\left | #1\right |}
\newcommand{\nm}[1]{\left \| #1 \right \|}
\newcommand{\B}{\mathcal{B}}
\newcommand{\D}{\mathbb{D}}
\newcommand{\DD}{\widehat{\mathcal{D}}}
\newcommand{\Dd}{\widecheck{\mathcal{D}}}
\newcommand{\DDD}{\mathcal{D}}
\newcommand{\N}{\mathbb{N}}
\newcommand{\C}{\mathbb{C}}
\renewcommand{\phi}{\varphi}
\def\BMOA{\mathord{\rm BMOA}}
\def\a{\alpha}       \def\b{\beta}        \def\g{\gamma}
     \def\om{\omega}      
       \def\t{\theta}       
                  \def\z{\zeta}
\def\omg{\widehat{\omega}}
\renewcommand{\H}{\mathcal{H}}
\newenvironment{Prf}{\noindent{\emph{Proof of}}}
{\hfill$\Box$ }
\newcommand{\Apq}[2]{A^{#1,#2}_\om}
\begin{document}
	\title[Duality of  mixed norm spaces induced by radial  one-sided doubling weight]{Duality of  mixed norm spaces induced by on radial  one-sided  doubling weight}

	\author[A. moreno]{\'Alvaro Miguel Moreno}
	\address{Departamento de Analisis Matem\'atico, Universidad de M\'alaga, Campus de Teatinos, 
		29071 Malaga, Spain}
	\email{alvarommorenolopez@uma.es}
	
	\author[J. A. Pel\'aez]{Jos\'e \'Angel Pel\'aez}
	\address{Departamento de Analisis Matem\'atico, Universidad de M\'alaga, Campus de Teatinos, 
		29071 Malaga, Spain}
	\email{japelaez@uma.es}
	
	\thanks{This research is supported in part by Ministerio de Ciencia e Innovaci\'on, Spain, project PID2022-136619NB-I00; La Junta de Andaluc{\'i}a, project FQM210.}
	
	\subjclass{32C37, 46E30, 30H30, 30H35, 30H99, 47G10}
	\keywords{ Dual space, Mixed norm space, Radial doubling weight, Bergman Projection, Bloch space, BMOA space}
	
	\date{\today}
	\begin{abstract}
		For $0<p,q<\infty$ and $\omega$ a radial weight, the space $L^{p,q}_\omega$ consists of complex-valued measurable functions $f$ on the unit disk such that
		$$
		\| f\|_{L^{p,q}_\omega}^q = \int_0^1 \left (\frac{1}{2\pi}\int_0^{2\pi}|f(re^{i\theta})|^pd\theta \right )^{\frac{q}{p}}r\omega(r)\,dr, 
		$$
		and the mixed norm space $A^{p,q}_\omega$ is the subset of $L^{p,q}_\omega$ consisting of analytic functions. \\
		 We say that a radial weight $\omega$ belongs to $\widehat{\mathcal{D}}$ if there exists $C=C(\omega)>0$ such that 
$$\int_r^1\omega(s)ds \leq C \int_{\frac{1+r}{2}}^1\omega(s)\,ds \,\, \text{for every}\,\, 0\leq r <1.$$ 

We describe the dual space of $A^{p,q}_\omega$ for every $0<p,q<\infty$ and  $\omega\in\widehat{\mathcal{D}}$. Later on, 
		we  apply the obtained description of the dual space of $A^{p,q}_\omega$ to prove that the Bergman projection induced by $\omega$, $P_\omega$, is bounded on $L^{p,q}_\omega$ for $1<p,q<\infty$ and 
 $\omega\in \widehat{\mathcal{D}}$. Besides, we also prove that $P_\omega$ and the corresponding maximal Bergman projection $P_\omega^+$ are not simultaneously bounded on $L^{p,q}_\om$ for $1<p,q<\infty$ and 
 $\omega\in \widehat{\mathcal{D}}$.
	\end{abstract}
	\maketitle
	\section{Introduction and basic notation}
	Let $\H(\D)$ denote the space of analytic functions in the unit disc $\D=\{z\in\C:|z|<1\}$ of the complex plane. For a non-negative function $\om\in L^1([0,1))$, 
the extension to $\D$, defined by $\om(z)=\om(|z|)$ for all $z\in\D$, 
is called a radial weight. For $0<p\le\infty, 0<q<\infty$ and such an $\omega$,  the weighted mixed norm Lebesgue  space $L^{p,q}_\om$ consist of measurable functions $f$ on $\D$ such that
	$$
	\nm{f}_{L^{p,q}_\om}^q =\int_0^1 M_p^q(r,f) r\om(r)\,dr,
	$$
	where 
	$$	M_p(r,f) =\left (\frac{1}{2\pi}\int_0^{2\pi}\abs{f(re^{i\t})}^p d\t \right )^{\frac{1}{p}},\quad 0<p<\infty,\,\, 0\leq r < 1,$$
and
$$ M_\infty(r,f) = \sup\limits_{\abs{z}=r}\abs{f(z)}, \quad 0\leq r < 1.$$
The space
	$L^{p,\infty}_\om$ consists of measurable functions $f$ on $\D$ such that
	$$
	\nm{f}_{L^{p,\infty}_\om} = \sup\limits_{0\leq r < 1} M_p(r,f)\omg(r),
	$$
where  $\widehat{\om}(z)=\int_{|z|}^1\om(s)\,ds$. The corresponding weighted 
  mixed norm space is $\Apq{p}{q} = L^{p,q}_\om \cap \H(\D)$.  
	If  $0<p=q<\infty$, we denote    $L^{p,p}_\om = L^p_\om$ and  $\Apq{p}{p} = A^p_\om$, which is nothing but the Bergman space induced by radial weight $\om$.  Throughout this paper we assume that $\widehat{\om}(z)> 0$ for all $z\in\D$, for otherwise $A^{p,q}_\om=\H(\D)$. \\
	\par
	A radial weight $\om$ belongs to $\DD$ if there exists $C=C(\om)>0$ such that 
	$$
	\omg(r)\le C\omg\left(\frac{1+r}{2}\right),\quad 0\le r<1.
	$$
	We say that a radial weight $\om\in\Dd$ if there exists $K=K(\om)>1$ and $C=C(\om)>1$ such that 
	\begin{equation}\label{eq: def Dcheck}
	\omg(r)\ge C\omg\left(1-\frac{1-r}{K}\right),\quad 0\le r<1.
	\end{equation}
	We denote $\mathcal{D}=\DD\cap\Dd$ for short, and we simply say that $\omega$ is a doubling weight if $\om\in\DDD$. 
It includes the standard radial weights  $\nu_\a(z) =(\alpha+1) \left (1-\abs{z}^2 \right )^{\a}$, $\a>-1$,  while exponentially decaying weights do not belong to~$\DD$. Membership in $\DD$ or $\Dd$ does not require differentiability, continuity, or strict positivity. In fact, weights in these classes may vanish on a relatively large portion of each  annulus $\{z : r \le |z| < 1\}$ of $\D$. The classes $\DD$ and $\DDD$ are known to arise naturally in various contexts of the concrete operator theory
on spaces of analytic functions. For instance, they are closely connected with Littlewood–Paley estimates, bounded Bergman projections, and the Dostanić's problem~\cite{advances}. For basic properties and illustrative examples of weights in these and other related classes, see \cite{PR, PelSum14, PR2020, advances} and the references therein.

Mixed norm spaces induced by standard weights first appeared in Hardy and Littlewood’s paper on properties of the integral means~\cite{HLMZ1932}, although the spaces themselves were not explicitly defined until Flett’s works~\cite{F1,F2}. Since then, they have been extensively studied by many authors~\cite{ACRPComp19, AhJet, BlascoCan95, BKM99, Gad,PelRatSie}. For example, such spaces arise naturally in the study of coefficient multipliers on Hardy and weighted spaces, as well as in the analysis of the generalized Hilbert operator on weighted Bergman spaces~\cite{GaGiPeSis, PelRathg, PelSeco}.

 Given a quasi-norm vector space $X\subset \H(\D)$ we denote its dual space (also called topological dual space) by $X^\ast$, that is, the space of continuous linear functionals $L: \, X\to \mathbb{C}$. It is worth  mentioning that it is a Banach space with the norm $\| L\|=\sup\{ |L(f)|: \nm{f}_{X}\le 1\}$. 
If $H\subset \H(\D)$ is a Hilbert space and $Y\subset \H(\D)$ is a normed vector space, we write 
$X^\ast    \simeq Y$ under the $H$-pairing with equivalence of norms,  if any $L\in X^\ast$ satisfies that $L(f)=\langle f,g\rangle_{H}$ for some $g\in Y$ and there are  constants $C_1>0$ and $C_2>0$ such that
$ C_1 \| L\|\le \|g\|_{Y}\le C_2  \| L\|$.\\
For instance, in the case of the classical Hardy spaces $H^p$ it is known the relation
 $(H^p)^\ast \simeq H^{p'}$ $\left( \frac{1}{p}+ \frac{1}{p'}\right)$ if $1<p<\infty$ \cite{zhu} and  the  Fefferman's duality relation $\left (H^1\right )^\ast\simeq\BMOA$ \cite{girela},  under the $H^2$-pairing
$$
\langle f,g \rangle_{H^2}=\lim_{r\to 1^-} \sum_{k=0}^\infty \widehat{f}(k)\overline{\widehat{g}(k)}r^k, \quad f(z)= \sum_{k=0}^\infty \widehat{f}(k)z^k, \,  g(z)= \sum_{k=0}^\infty \widehat{g}(k)z^k \in \H(\D).
$$
Here and on the following, $\BMOA$ is the space consisting of $H^1$ functions with bounded mean oscillation on the boundary.

Within the framework of weighted Bergman spaces, descriptions of $(A^p_\omega)^\ast$ play a crucial role in studying the boundedness and compactness of several significant operators, including the Bergman projection~\cite{advances}, Toeplitz operators~\cite{PelRatSie}, and generalized Hilbert operators~\cite{GaGiPeSis, PelRathg}. They also find applications in function spaces, for example in the case of standard weighted mixed norm spaces, they can be used to characterize the space of coefficient multipliers~\cite{AhJet}.

The main objective of this work is to obtain useful descriptions of the dual space $\left ( \Apq{p}{q}\right )^\ast$, for any $0<p,q<\infty$ and $\om\in\DD$.  In order to state the main result of the paper some notation is needed. For any radial weight $\omega$,  
the  $A^2_\om$ pairing is defined as
$$\langle f,g \rangle_{A^2_\omega}=\lim_{r\to 1^-} \sum_{k=0}^\infty \widehat{f}(k)\overline{\widehat{g}(k)} \om_{2k+1}r^k, \quad f,g \in \H(\D),$$
where $\omega_x = \int_0^1 r^x \,\omega(r)\,dr$, $x \geq 0$,  are the moments induced by $\omega$.

For $0<q\le\infty$, a radial weight $\om$ and $X\subset \H(\D)$ a quasi-normed space, we denote
	$$
	X(q,\om)=\left\{f\in\H(\D):\|f\|^{q}_{X(q,\om)}=\int_{0}^{1}\|f_r\|^q_X\om(r)\,dr<\infty\right\},\quad 0<q<\infty,
	$$
	and
	$$
	X(\infty,\om)=\left\{f\in\H(\D):\|f\|_{X(\infty,\om)}=\sup_{0<r<1}\|f_r\|_X\widehat{\om}(r)<\infty\right\}.
	$$
In particular $\Apq{p}{q} = H^p(q,\om)$. Let us also recall that the classical Bloch space $\B$ consists of the functions
$f\in\H(\D)$ such that $\|f\|_{\B}=|f(0)|+\sup_{z\in\D}(1-|z|^2)|f'(z)|<\infty$.

\begin{theorem}\label{th: dualidad general}
		Let $\om\in \DD$ and $0<p,q<\infty$. Then,  the following assertions hold:
		\begin{enumerate}
			\item[(i.a)] If $1< p <\infty$ and $1<q<\infty$, then  $\left ( \Apq{p}{q} \right )^\ast \simeq \Apq{p'}{q'}$ under the $A^2_\om$-pairing with equivalence of norms;
			\item[(i.b)] If $1< p <\infty$ and  $0<q\leq 1$, then  $\left ( \Apq{p}{q} \right )^\ast \simeq \Apq{p'}{\infty}$ under the $A^2_{\mu_q}$-pairing
			with equivalence of norms, where $\mu_q = \om \omg^{\frac{1}{q}}$;
			\item[(ii.a)] If $p=1$ and $1<q< \infty$, then  $\left ( \Apq{p}{q} \right )^\ast \simeq BMOA(q',\om)$ under the $A^2_\om$-pairing with equivalence of norms;
			\item[(ii.b)] If $p=1$ and $0<q\leq 1$, then  $\left ( \Apq{p}{q} \right )^\ast \simeq BMOA(\infty,\om)$ under the $A^2_{\mu_q}$-pairing
			with equivalence of norms, where $\mu_q = \om \omg^{\frac{1}{q}}$;
			\item[(iii.a)] If $0<p<1$ and $1<q<\infty$, then $\left ( \Apq{p}{q} \right )^\ast \simeq \B(q',\om)$ under the pairing 
			$$
			\langle f,g\rangle_{\om,p} = \lim_{r\to 1^-} \sum_{n=0}^\infty \widehat{f}(n) \overline{\widehat{g}(n)}\om_{2n+1}
\left(\nu_{\frac{1}{p}-2} \right)_{2n+1} r^n,
			$$
			with equivalence of norms;
			\item[(iii.b)] If $0<p<1$ and $0<q\leq 1$, then $\left ( \Apq{p}{q} \right )^\ast \simeq \B(\infty,\om)$ under the pairing  
			$$
			\langle f,g\rangle_{\om,p,q} = \lim_{r\to 1^-} \sum_{n=0}^\infty \widehat{f}(n) \overline{\widehat{g}(n)}(\mu_q)_{2n+1}\left(\nu_{\frac{1}{p}-2} \right)_{2n+1} r^n,
			$$
			with equivalence of norms, where $\mu_q = \om  \omg^{\frac{1}{q}}$.
		\end{enumerate}
	\end{theorem}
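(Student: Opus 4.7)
\emph{Plan of proof.}
The six cases share a common skeleton: one inclusion is a quantitative Hölder-type bound, the other a Taylor-coefficient reconstruction. Since polynomials are dense in $\Apq{p}{q}$ for every $0<p,q<\infty$, each functional is determined by its action on monomials, and it suffices in every case to exhibit a pairing that is well-defined on polynomials and extends with equivalent norms to the full space.

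\emph{Embedding $Y\hookrightarrow(\Apq{p}{q})^*$.}
I would combine an inner estimate on each circle $|z|=r$ with an outer one in the radial variable. When $1<p<\infty$ the inner estimate is $H^p$--$H^{p'}$ duality, yielding $\frac{1}{2\pi}\int_0^{2\pi}|f(re^{i\theta})g(re^{i\theta})|\,d\theta\le M_p(r,f)\,M_{p'}(r,g)$; for $p=1$ the Fefferman $H^1$--$\BMOA$ duality substitutes; for $0<p<1$ I would use the Hardy--Littlewood characterization of $H^p$ in terms of a weighted integral of $f'$, which is the origin of the factor $\left(\nu_{\frac{1}{p}-2}\right)_{2n+1}$ appearing in the pairings of (iii.a) and (iii.b). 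The outer integration is then closed by $L^q(\om)$--$L^{q'}(\om)$ Hölder when $1<q<\infty$, and by an $L^1(\om)$--$L^\infty(\omg^{1/q})$ type duality (which forces $\mu_q=\om\omg^{1/q}$) when $0<q\le 1$; the latter rests on the $\DD$-estimate $\int_0^1 h(r)\om(r)\,dr\le C\sup_{0<r<1} h(r)\omg(r)^{1/q}$ valid for increasing $h$ (an integration-by-parts consequence of $\om\in\DD$).

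\emph{Embedding $(\Apq{p}{q})^*\hookrightarrow Y$.}
Given $L$, I would set $\overline{\hat g(n)}=L(z^n)/c_n$ with normalizing moments $c_n$ dictated by the pairing ($c_n=\om_{2n+1}$ in (i.a)/(ii.a), $c_n=(\mu_q)_{2n+1}$ in (i.b)/(ii.b), and $c_n=\om_{2n+1}(\nu_{\frac{1}{p}-2})_{2n+1}$ or $(\mu_q)_{2n+1}(\nu_{\frac{1}{p}-2})_{2n+1}$ in (iii)). Truncating to polynomials $g_N$, the uniform estimate $\|g_N\|_Y\le C\|L\|$ is to be produced by testing $L$ against polynomial functions that realize the predual norm of $Y$: in (i.a)/(i.b) one picks $f$ whose radial slices $f_r$ nearly achieve the $H^p$-norm dual to $g_r$; in (ii.a)/(ii.b), atomic decompositions of $H^1$ feed in $\BMOA$-test functions; in (iii.a)/(iii.b) a reproducing-kernel probe at a single point $z_0\in\D$, whose $\Apq{p}{q}$-norm is controlled by one-sided kernel estimates of Peláez--Rättyä type, delivers the pointwise Bloch bound $(1-|z_0|^2)|g'(z_0)|\lesssim\|L\|$. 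Passage to the limit is by normal convergence and Fatou.

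\emph{Principal obstacle.}
The main difficulty is that boundedness of the Bergman projection $P_\om$ on $L^{p,q}_\om$ is a \emph{corollary} of the theorem and not an ingredient, so the classical Hahn--Banach-plus-projection route $(\Apq{p}{q})^*\simeq(L^{p,q}_\om)^*/(\Apq{p}{q})^\perp$ is unavailable. My plan is to remain entirely in the analytic category by working with Taylor-coefficient truncations and polynomial test functions, relying only on the one-sided kernel and moment asymptotics available for $\om\in\DD$ (notably $\om_{2n+1}\asymp\omg(1-1/(2n+1))$ and its analogues for $\mu_q$ and $\nu_{\frac{1}{p}-2}$). A secondary delicacy is the justification of the limit $r\to 1^-$ in the pairings of (iii), which I would defer until the norm bound on $g$ is in place, since that bound supplies the absolute convergence needed to commute the limit with summation.
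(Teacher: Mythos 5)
The \emph{easy} inclusion in your plan (the Hölder-type bound showing $Y\hookrightarrow(\Apq{p}{q})^\ast$) is fine for $p\ge 1$, and the dyadic-in-$\omg$ argument you hint at for $0<q\le 1$ does work. The genuine gap is in the converse inclusion $(\Apq{p}{q})^\ast\hookrightarrow Y$, which is where all the work lies. Your plan is to test $L$ against ``polynomial functions that realize the predual norm of $Y$'', e.g.\ in (i.a) an $f$ ``whose radial slices $f_r$ nearly achieve the $H^p$-norm dual to $g_r$''. But the radial slices of a single analytic function are rigidly linked (they are the dilations of one boundary function), so you cannot prescribe a near-norming $H^p$ function on each circle independently of $r$; and the alternative of norming $g_\rho$ in $L^{p',q'}_\om$ against arbitrary measurable $h$ and then passing to analytic test functions requires the boundedness of $P_\om$, which, as you yourself note, is a corollary of the theorem and not available. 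No substitute is offered for the quantitative ingredient that bridges this: in the paper this is done by identifying $(\Apq{p}{q})^\ast$ with the coefficient multipliers $[\Apq{p}{q},H^\infty]$ (Proposition~\ref{pr:dual-admisible}), rewriting $\Apq{p}{q}$ as the block space $\ell^q_{s/q}(H^p,\{P_n\})$ via Theorem~\ref{th: descomposicion pavlovic}, invoking Pavlovi\'c's multiplier theorem (Theorem~\ref{th:pavlovic-bloques}) together with the known descriptions of $(H^p)^\ast$, and then converting the $H^2$-pairing into the $A^2_\om$- or $A^2_{\mu_q}$-pairing through the operator $I^\mu$; the key estimate $\|I^\mu(P_n\ast g)\|_X\asymp K^{-\a n}\|P_n\ast g\|_X$ (Proposition~\ref{prop: estimacion norma poli}) rests on the uniform boundedness of Ces\`aro means and on $\om\in\DD$. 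Your moment asymptotics $\om_{2n+1}\asymp\omg(1-1/(2n+1))$ alone do not yield this blockwise comparison, and without it the coefficientwise definition $\overline{\widehat g(n)}=L(z^n)/c_n$ gives no control of $\|g\|_Y$.

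A second, more specific flaw: in case (iii.a) the target space is $\B(q',\om)$ with $1<q'<\infty$, so a reproducing-kernel probe at a single point $z_0$ can at best produce a uniform bound of the type $(1-|z_0|^2)|g'(z_0)|\lesssim\|L\|$, i.e.\ membership in a Bloch-type space; this has the right shape only for the case $q'=\infty$ (iii.b). To capture the radial $L^{q'}(\om)$-integrability of $\|g_r\|_{\B}$ you must aggregate probes over all radii with weights, and making that rigorous again drives you back to a block decomposition of the kind the paper uses (together with Lemma~\ref{lema: bloch admisible}, which shows $I^{\nu_{\frac{1}{p}-2}}(\B)$ is an admissible Banach space so that the machinery applies). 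Likewise, in (ii) the phrase ``atomic decompositions of $H^1$ feed in $\BMOA$-test functions'' does not address how a single analytic test function is to norm $g$ simultaneously on all circles against the weight $\om$. So the overall architecture of your proposal does not close the hard direction in any of the six cases; the convergence of the limits $r\to1^-$ in (iii), which you defer, is comparatively minor once the norm bound is in place.
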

We recall that in the statement of Theorem~\ref{th: dualidad general},   $\nu_{\frac{1}{p}-2}(z) =(\frac{1}{p}-1) \left (1-\abs{z}^2 \right )^{\frac{1}{p}-2}$, $0<p<1$.

Useful descriptions of $\left ( A^p_\om \right )^\ast$, $\om\in\DD$, are known (see \cite[Theorem~4 and proof of Theorem~7]{advances} for the case $1\le p<\infty$   and   \cite[Theorem~1]{PeralaRattyaAASF2017} for the case $0<p<1$). Moreover, handy descriptions of the	dual spaces of mixed normed spaces induced  standard weights are known, see \cite{AhJet} and the references therein. However, our descriptions may differ in form from those obtained in the standard case; this is a deliberate choice to ensure the resulting spaces in our characterizations remain as simple and manageable as possible.

The proof of Theorem~\ref{th: dualidad general} is rather involved and relies on a large number of technical lemmas as well as known results from the literature. It will be divided, according to the value of $p$, into the proofs of Proposition~\ref{prop: dualidad 1<p}, Proposition~\ref{prop: dualidad 1<q y p=1}, and Proposition~\ref{prop: dualidad p<1}. Among these, the proof of Proposition~\ref{prop: dualidad p<1} is the most technical, although all three follow a similar general pattern, which we briefly outline below.
The first observation is that the dual of an admissible quasi-Banach space $X$ (see Subsection~\ref{sec21} for a precise definition), in which the polynomials are dense, can be identified with the space of coefficient multipliers from $X$ to $H^\infty$ under the $H^2$-pairing. Secondly, if $X$ is a Banach admissible space or a Hardy space, then there exist $K>1$ and a sequence of polynomials $\{P_n\}_{n=0}^\infty$ such that
$\|f\|_{X(\infty,\om)}\asymp\sup_{n\in\N\cup\{0\}}K^{-n}\|P_n\ast f\|_{X}$ and for $0<q<\infty$ $\|f\|_{X(q,\om)}\asymp\sum_{n=0}^\infty K^{-n}\|P_n\ast f\|^q_{X}$, where $\ast$ denotes the convolution. These equivalent norms will be used in what follows.
Moreover, when $0<p\le 1$, one uses the fact that for any admissible Banach space $X$,  $\alpha>0$ and
$\mu=\om\omg^{\alpha-1}$ the operator $I^\mu g(z)=\sum_{k=0}^\infty\widehat{g}(k)\mu_{2k+1}z^{k}$ satisfies $\|I^\mu P_n\ast f\|_{X}\asymp K^{-\alpha n}\|P_n\ast f\|_{X}$, see Proposition~\ref{prop: estimacion norma poli} below. At this step, the hypothesis $\omega\in\DD$ and the uniform boundedness of Cesàro means on $X$ are used in an essential way.
Finally, known descriptions of the dual spaces $(H^p)^\star$ (see \cite[Theorem 9.8]{zhu} for $1<p<\infty$, \cite{girela} for $p=1$, and \cite{Zhu2} for $0<p<1$) allow us to identify the coefficient multipliers from $H^p$ to $H^\infty$. Some further technical calculations involving the operator $I^\mu$ then yield the desired isomorphism.

We will apply Theorem~\ref{th: dualidad general} to study the boundedness of the Bergman projection 
$$
	P_\om(f)(z) = \int_\D f(\zeta)\overline{B^\om_z(\zeta)}\om(\zeta)dA(\zeta),\quad z\in\D,
	$$
	induced by a radial weight $\omega$ on weighted mixed norm Lebesgue  spaces $L^{p,q}_\om$. Here and on the following, $B^\om_z$, $z\in \D$, are the reproducing kernels of the weighted Bergman space  $A^2_{\om}$.  The classical Bergman kernel $B^\alpha_z$ has the very useful formula 
	$
	B^\alpha_z(\zeta)=\frac1{(1-\overline{z}\zeta)^{2+\alpha}}, \quad z,\z\in \D.
	$
However, if $\om$ is only assumed to be a radial weight, then the Bergman reproducing kernel $B^\omega_z$  does not have such a neat explicit expression, and this is precisely one of the main difficulties to tackle the aforementioned question. Consequently, we are forced to work with the identity $B^\om_z(\z)=\sum_{n=0}^\infty\frac{\left(\overline{z}\z\right)^n}{2\om_{2n+1}}$ $z,\z\in\D$.
	The boundedness of projections  on various function spaces is a compelling topic which has attracted a considerable amount of attention during the last decades. This is not only due to the mathematical difficulties the question raises, but also to its numerous applications in important questions in operator theory such as duality relationships, Littlewood-Paley inequalities, and the famous Sarason's conjecture on Bergman spaces
 \cite{AlPoRe, Gad,Jet,PelRatproj,advances,Shi,Zhu2}. In our case, as a byproduct of  Theorem~\ref{th: dualidad general} (i.a), we establish the boundedness of the Bergman projection on $L^{p,q}_\om$ for $1<p,q<\infty$ and $\om \in \DD$. 

	\begin{theorem}\label{coro: acotacion proyeccion}
		Let $\om\in \DD$ and $1<p,q<\infty$. Then $P_\om:L^{p,q}_\om \to L^{p,q}_\om$ is bounded.
	\end{theorem}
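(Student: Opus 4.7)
The plan is to deduce Theorem~\ref{coro: acotacion proyeccion} from the duality $\left(\Apq{p}{q}\right)^\ast\simeq \Apq{p'}{q'}$ under the $A^2_\om$-pairing, i.e.\ Theorem~\ref{th: dualidad general}(i.a), via the standard $L^2$-pairing trick for Bergman projections. Since $P_\om f\in\H(\D)$, one has $\nm{P_\om f}_{L^{p,q}_\om}=\nm{P_\om f}_{\Apq{p}{q}}$, so it suffices to estimate the latter. Applying Theorem~\ref{th: dualidad general}(i.a) with $(p,q)$ replaced by $(p',q')$ identifies $\left(\Apq{p'}{q'}\right)^\ast$ with $\Apq{p}{q}$ under the same $A^2_\om$-pairing. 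Combining this with the (standard for $\om\in\DD$) density of polynomials in $\Apq{p'}{q'}$ yields
$$\nm{h}_{\Apq{p}{q}}\asymp \sup\bigl\{\,|\langle h,g\rangle_{A^2_\om}|:g\text{ polynomial},\ \nm{g}_{\Apq{p'}{q'}}\le 1\,\bigr\},\qquad h\in\Apq{p}{q}.$$

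Next, I would identify the pairing explicitly. For $f\in L^{p,q}_\om$ (first assumed to be bounded with compact support in $\D$, for rigor) and $g$ a polynomial, plugging the series $B^\om_z(\z)=\sum_n\frac{(\overline{z}\z)^n}{2\om_{2n+1}}$ into the definition of $P_\om f$ and applying Fubini produces $\widehat{P_\om f}(n)=\frac{1}{2\om_{2n+1}}\int_\D f(\z)\overline{\z}^{\,n}\om(\z)\,dA(\z)$; matching against the definition of the $A^2_\om$-pairing then gives
$$\langle P_\om f,g\rangle_{A^2_\om}\;=\;c\int_\D f(\z)\overline{g(\z)}\,\om(\z)\,dA(\z)$$
for a positive constant $c$. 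Two applications of Hölder's inequality---first in the angular variable with exponents $p,p'$, then on $(0,1)$ against the measure $r\om(r)\,dr$ with exponents $q,q'$---bound the right-hand side by $c\nm{f}_{L^{p,q}_\om}\nm{g}_{L^{p',q'}_\om}$. Combined with the duality supremum above, this gives $\nm{P_\om f}_{\Apq{p}{q}}\lesssim \nm{f}_{L^{p,q}_\om}$ for $f$ in this dense subclass, and a routine density argument extends the estimate to all $f\in L^{p,q}_\om$.

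All the genuine difficulty has been absorbed into Theorem~\ref{th: dualidad general}(i.a); the present argument is then a textbook duality-plus-Hölder deduction. The only technical matters left are the density of polynomials in $\Apq{p'}{q'}$ and of nice functions in $L^{p,q}_\om$ (both standard for $\om\in\DD$) and the Fubini step matching the $A^2_\om$-pairing with $\int_\D f\overline{g}\om\,dA$ for polynomial $g$; neither presents a serious obstacle. Hence the main obstacle to Theorem~\ref{coro: acotacion proyeccion} lies not in the present argument but in the duality result it exploits.
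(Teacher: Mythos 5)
Your proposal is correct and hinges on exactly the same pillar as the paper's proof, namely Theorem~\ref{th: dualidad general}(i.a) applied with the conjugate exponents together with two applications of H\"older's inequality; the difference is that you run the duality in the opposite direction. The paper starts from $g\in L^{p,q}_\om$, notes by H\"older that $L_g(f)=\langle f,g\rangle_{A^2_\om}$ is a bounded functional on $\Apq{p'}{q'}$ with $\nm{L_g}\le\nm{g}_{L^{p,q}_\om}$, invokes Theorem~\ref{th: dualidad general}(i.a) to produce a representative $h\in\Apq{p}{q}$ with $\nm{h}_{\Apq{p}{q}}\asymp\nm{L_g}$, and identifies $h=P_\om(g)$ by testing against monomials, using $L^{p,q}_\om\subset L^1_\om$ and the identity $L_g(f)=\lim_{r\to1^-}\langle f_r,P_\om(g)\rangle_{L^2_\om}$ from \cite{advances}; this yields $P_\om(g)\in\Apq{p}{q}$ and the norm bound in one stroke, with no approximation step. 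Your direction instead needs three auxiliary facts that you treat as routine: (a) the converse half of the duality, i.e.\ that every $h\in\Apq{p}{q}$ satisfies $\nm{h}_{\Apq{p}{q}}\asymp\sup\{|\langle h,g\rangle_{A^2_\om}|:\nm{g}_{\Apq{p'}{q'}}\le1\}$ --- this does follow from Theorem~\ref{th: dualidad general}(i.a) combined with H\"older and uniqueness of the representative (test monomials), but it is not literally the statement as formulated in the paper; (b) the a priori membership $P_\om f\in\Apq{p}{q}$ for your dense class, without which the sup formula cannot be applied to $h=P_\om f$ (true for bounded, compactly supported $f$, since then $P_\om f\in H^\infty$ because $\sum_n\rho^n/\om_{2n+1}<\infty$ for $\rho<1$ when $\om\in\DD$); and (c) the closing density/Fatou passage to general $f\in L^{p,q}_\om$. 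All three are indeed routine, so your argument is sound; the paper's arrangement is simply the more economical one, while yours has the merit of making the cancellation $\langle P_\om f,g\rangle_{A^2_\om}=c\int_\D f\overline{g}\,\om\,dA$ explicit.
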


	Obtaining the boundedness of an analytic projection as a consequence of the identification of  the dual of certain space of analytic functions differs from the classical approach. In the case of an standard weight $\omega$ \cite{Gad,Zhu2} and even for other weighted versions of mixed norm spaces \cite{Jet,Shi}, the classical approach consists in obtaining a description of the dual spaces of $A^{p,q}_\om$  as a byproduct of the boundedness of some Bergman projection. 
 Nevertheless, we proceed in the opposite direction, as the classical approach to get the boundedness of the Bergman projection relies fundamentally on the fact that it is sufficient to establish the boundedness of the maximal Bergman projection
$$
	P^+_{\om}(f)(z) = \int_\D f(\zeta) \left|B^\om_z(\zeta)\right| \om(\zeta) \,dA(\zeta),\quad z\in\D.
	$$
 The next result shows that this  is not the case for mixed norm spaces induced by $\om\in\DD$.

	\begin{theorem}\label{th: acotacion P+}
		Let $\om\in \DD$ and $1<p,q<\infty$. Then $P^+_\om:L^{p,q}_\om \to L^{p,q}_\om$ is bounded if and only if $\om \in \DDD$.
	\end{theorem}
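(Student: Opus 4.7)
The plan is to treat the two implications of Theorem~\ref{th: acotacion P+} separately, reducing both to a one-dimensional weighted integral estimate. For the sufficiency direction, assume $\om\in\DDD$. Passing to polar coordinates and exploiting that $B^\om_z(\zeta)$ depends on the angular variables only through $\arg\zeta-\arg z$, Young's inequality for convolutions on $\T$ yields the pointwise control
\[
M_p(s,P^+_\om f)\lesssim \int_0^1 M_p(\rho,f)\,M_1(\rho,B^\om_s)\,\rho\,\om(\rho)\,d\rho.
\]
Hence the boundedness of $P^+_\om$ on $L^{p,q}_\om$ reduces to that of the one-dimensional integral operator $Tg(s)=\int_0^1 g(\rho)\,M_1(\rho,B^\om_s)\,\rho\,\om(\rho)\,d\rho$ on $L^q(r\om(r)\,dr)$, which I would establish via Schur's test using the test function $h(r)=\omg(r)^{-b}$ for $b\in(0,\min(1/q,1/q'))$, together with the sharp kernel bound $M_1(\rho,B^\om_s)\asymp 1/\omg(\min(\rho,s))$ available for $\om\in\DDD$ (cf.~\cite{advances}). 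Both Schur conditions then reduce, after the substitution $u=\omg(\rho)$ and the identity $\omg'=-\om$, to explicit one-dimensional integrals whose convergence is granted by the chosen range of $b$.

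For the necessity, assume $P^+_\om$ is bounded on $L^{p,q}_\om$; since $\om\in\DD$ by hypothesis, the task is to show $\om\in\Dd$. The key observation is that for a non-negative radial $f(\zeta)=g(|\zeta|)$ one has $M_p(r,f)=g(r)$ independently of $p$, so
\[
\|f\|_{L^{p,q}_\om}^q=\int_0^1 g(r)^q\,r\om(r)\,dr\asymp \|f\|_{L^q_\om}^q,
\]
and $P^+_\om f$ is again radial; hence the action of $P^+_\om$ on the radial subspace of $L^{p,q}_\om$ coincides, up to a fixed constant, with its action on the radial subspace of $L^q_\om$, and in either case is represented by $T$ acting on the profile $g$. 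The boundedness of $P^+_\om$ on $L^{p,q}_\om$ therefore forces $T$ to be bounded on $L^q(r\om(r)\,dr)$, and the one-dimensional argument behind the necessity of the diagonal result in \cite{advances} (based on testing $T$ against characteristic functions of suitable annuli $[r,1)$ as $r\to 1^-$) then yields the improved decay condition $\om\in\Dd$.

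The main obstacle I anticipate is the execution of the Schur-test estimate in the sufficiency: the natural test function $h=\omg^{-b}$ succeeds only in the range $b\in(0,\min(1/q,1/q'))$, and verifying both Schur conditions requires a careful interplay between the kernel bound $1/\omg(\min(\rho,s))$, the measure $r\om(r)\,dr$, and the identity $\omg'=-\om$, all of which rely crucially on the doubling assumption $\om\in\DDD$. Once this one-dimensional step is handled, the necessity is comparatively light since, after the radial reduction, it reduces to the diagonal case already treated in the literature.
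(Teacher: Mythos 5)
Your proposal is correct and follows essentially the same route as the paper: for sufficiency, the Minkowski/Young reduction of $M_p(r,P^+_\om f)$ to the one-dimensional operator with kernel $M_1(s,B^\om_r)$ followed by Schur's test with $h=\omg^{-b}$ (the paper takes the admissible choice $b=\tfrac{1}{qq'}$ and writes the kernel as $1+\int_0^{rs}\frac{dt}{(1-t)\omg(t)}$ via Proposition~\ref{pr:kernels-p=1} and Lemma~\ref{caract. D check}, which is the same bound as your $1/\omg(\min(\rho,s))$); for necessity, radial test functions supported on annuli $[t,1)$, the same kernel estimate, and the argument of \cite[Theorem~9]{advances} to conclude $\om\in\Dd$. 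No essential difference in method, only in bookkeeping.
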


Theorem~\ref{coro: acotacion proyeccion} together with Theorem~\ref{th: acotacion P+} shows that there exists a
 cancellation of the Bergman reproducing kernel, which plays an essential role in the boundedness of the Bergman projection $P_\om$ on weighted mixed norm Lebesgue  space $L^{p,q}_\om$ induced by an upper doubling radial weight $\omega$.
	As a consequence, we deduce that whenever $\om\in\DD\setminus\Dd$ and $1<p,q<\infty$, the Bergman projection $P_\omega$ is bounded on $L^{p,q}_\om$, whereas the maximal Bergman projection is not. The proof of Theorem~\ref {th: acotacion P+} relies on an appropiate modification of Schur's test for this class of spaces, together with the following  precise estimate for the integral means of order one of the Bergman reproducing kernels.

\begin{proposition}\label{pr:kernels-p=1}
	Let $\om\in \DD$ and $N\in \N\cup\{0\}$. Then,
	$$M_1(r,(B^\om_a)^{(N)})\asymp 1+\int_{0}^{r|a|}\frac{dt}{(1-t)^{N+1}\omg(t)}, \quad r\in (0,1), a \in \D.$$
\end{proposition}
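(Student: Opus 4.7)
The plan is to reduce the statement to a one-variable power-series estimate and then proceed via a dyadic decomposition combined with a bootstrap from the $N=0$ case. Writing the Bergman reproducing kernel as
\[
B^\om_a(\z) = \sum_{n\ge 0}\frac{(\overline a\z)^{n}}{2\om_{2n+1}},\qquad (B^\om_a)^{(N)}(\z) = \overline a^{N} F^{(N)}(\overline a\z),
\]
with $F(w) = \sum_{n\ge 0} w^n/(2\om_{2n+1})$, the rotation invariance of angular integration yields $M_1(r,(B^\om_a)^{(N)}) = |a|^{N} M_1(s,F^{(N)})$ for $s=r|a|$. Thus the task reduces to showing $|a|^{N} M_1(s,F^{(N)}) \asymp 1+\int_0^{s} dt/((1-t)^{N+1}\omg(t))$.

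Next I would encode the right-hand side as a dyadic sum. The standard asymptotic $\om_x\asymp\omg(1-1/x)$ valid for $\om\in\DD$, together with the doubling of $\omg$ (see \cite{advances,PR}), gives after a change of variables
\[
1+\int_0^{s}\frac{dt}{(1-t)^{N+1}\omg(t)} \asymp \sum_{j:\,2^{-j}\ge 1-s}\frac{2^{jN}}{\omg(1-2^{-j})}.
\]
Decompose $F^{(N)}=\sum_{j\ge 0} Q_j$ where $Q_j$ collects the Taylor monomials of frequency $k\in I_j=[2^{j},2^{j+1})$; each $Q_j$ is a polynomial with positive coefficients comparable to $2^{jN}/\omg(1-2^{-j})$.

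The base case $N=0$, namely $M_1(s,F)\asymp 1+\int_0^s dt/((1-t)\omg(t))$, is essentially available in \cite{advances} for $\om\in\DD$, and I would use it as the building block. For general $N$ I would bootstrap via the Cauchy derivative formula on the circle of radius $\rho=(1+s)/2$, interchanging the order of integration and using $\int_0^{2\pi} d\theta/|1-\mu e^{i\theta}|^{N+1}\asymp (1-\mu)^{-N}$ to obtain $M_1(s,F^{(N)}) \lesssim M_1(\rho,F)/(1-s)^N$. The doubling of $\omg$ gives $\omg(\rho)\asymp \omg(s)$, so this yields the desired upper bound. For the matching lower bound I would test $F^{(N)}$ against a function of $H^\infty$-norm $\lesssim 1$ concentrated angularly near $\arg\overline a$, designed to pick out the dominant dyadic block at index $J\asymp\log_2 1/(1-s)$.

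The main obstacle will be the sharp lower bound: a direct triangle-inequality argument on the dyadic decomposition introduces a spurious logarithmic factor, and a single polynomial block admits no elementary $M_1$ lower bound. I expect the resolution will come from a duality pairing with an atomic $H^\infty$ test function that isolates the block at frequency $J$, combined with the observation that for $\om\in\DD$ the weighted partial sums $\sum_{j\le J} 2^{jN}/\omg(1-2^{-j})$ are dominated by their last term (up to constants), so that a lower bound from one block already matches the whole dyadic sum. The $|a|^N$ factor on the left-hand side of the reduced inequality is absorbed into the equivalence constants whenever $|a|$ is bounded below, while the regime of small $|a|$ can be handled directly from the power series since both sides then reduce to quantities of order $|a|^N$ plus a bounded constant.
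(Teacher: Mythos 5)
Your reduction to the radial function $F(w)=\sum_n w^n/(2\om_{2n+1})$, the dyadic encoding of the right-hand side, and the single-frequency (or single-block) lower bound for $N\ge1$ are all sound; in fact for $N\ge 1$ the coefficient bound $|\widehat{F^{(N)}}(k)|s^k\le M_1(s,F^{(N)})$ with $k\asymp(1-s)^{-1}$ already gives the sharp lower bound, because the weighted partial sums $\sum_{j\le J}2^{jN}/\omg(1-2^{-j})$ are indeed dominated by the last term when $N\ge1$ (this uses only that $\omg$ is decreasing; note it is \emph{false} for $N=0$, but you delegate that case to the known base estimate, which is legitimate).

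The genuine gap is in your upper bound. The Cauchy-formula bootstrap gives $M_1(s,F^{(N)})\lesssim (1-s)^{-N}M_1(\rho,F)$ with $\rho=(1+s)/2$, i.e. $(1-s)^{-N}\bigl(1+\int_0^{s}\frac{dt}{(1-t)\omg(t)}\bigr)$ up to constants, and the doubling relation $\omg(\rho)\asymp\omg(s)$ does not close the argument: what you actually need is $\int_0^{s}\frac{dt}{(1-t)\omg(t)}\lesssim \frac{1}{\omg(s)}$, since for $N\ge1$ the target $1+\int_0^{s}\frac{dt}{(1-t)^{N+1}\omg(t)}$ is comparable to $1+\frac{1}{(1-s)^{N}\omg(s)}$. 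By Lemma~\ref{caract. D check} that inequality characterizes $\om\in\Dd$, and it fails for general $\om\in\DD$. Concretely, for $\omg(t)=\bigl(\log\frac{e}{1-t}\bigr)^{-2}$ (which lies in $\DD\setminus\Dd$) your bootstrap yields $(1-s)^{-N}\bigl(\log\frac{e}{1-s}\bigr)^{3}$ while the true size is $(1-s)^{-N}\bigl(\log\frac{e}{1-s}\bigr)^{2}$, so the estimate is off by an unbounded factor precisely in the regime $\DD\setminus\Dd$ that this paper is designed to cover. In dyadic terms, the bootstrap replaces $\sum_{j\le J}2^{jN}/\omg(1-2^{-j})$ by $2^{JN}\sum_{j\le J}1/\omg(1-2^{-j})$, inflating the low-frequency blocks. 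The paper avoids this by keeping the derivative weight attached to each block: it decomposes $(B^\om_{|a|})^{(N)}$ with the polynomials $V_n$ and proves, via summation by parts and the uniform boundedness of Ces\`aro means, the blockwise estimate $\|D^\om V_n\ast g\|_{H^1}\lesssim \om_{2^{n+2}}^{-1}\|V_n\ast g\|_{H^1}$, so each block contributes $\asymp 2^{nN}|a|^{2^{n-1}}/\om_{2^{n+2}}$ and the sum matches the integral without loss. To repair your argument you would need a blockwise (frequency-localized) version of the upper bound rather than a global Cauchy-formula differentiation, which essentially leads you back to an argument of the paper's type.
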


Proposition~\ref{pr:kernels-p=1} follows from \cite[Theorem~1]{PelRatproj}, however we will provide in Section 4 a simplified and direct proof. 
 		The rest of the paper is organized as follows. Section 2 introduces all the necessary notation and preliminary results on radial weights, dual spaces and decomposition norms. Section 3 contains the proofs of all the results leading to Theorem 1.1. Finally, Section 4 is devoted to the study of the Bergman projection $P_\omega$, in particular,  to the proofs of Theorem~\ref{coro: acotacion proyeccion} and Theorem~\ref{th: acotacion P+}. 
 	
 	Finally, we introduce the following notation, we will use $a\lesssim b$ if there exists a constant $C=C(\cdot)>0$ such that $a\leq Cb$, and $a\gtrsim b$ is understood in an analogous manner. In particular, if $a\lesssim b$ and $a\gtrsim b$, then we write $a\asymp b$ and say that $a$ and $b$ are comparable.
\section{Background}

In this section, we present some notation and gather a good number of results which will be  used in the proof of Theorem~\ref{th: dualidad general}.

\subsection{Radial weights}

For $\beta>0$ and $\om$ a radial weight we denote $\om_{ \{ \beta \} }(r)=(1-r^2)^\beta\om(r)$. 
We will use the following descriptions of the class $\DD$, proved in \cite[Lemma~2.1]{PelSum14} and \cite[(1.3)]{advances}. See also \cite[Lemma A]{PeldelaRosa22}.

\begin{letterlemma}
	\label{lemma: caract dgorro}
	Let $\om$ be a radial weight. Then, the following statements are equivalent:
	\begin{itemize}
		\item[\rm(i)] $\om \in \DD$;
		\item[\rm(ii)] There exist $C=C(\om)\geq 1$ and $\a_0=\a_0(\om)>0$ such that
		$$ \omg(s)\leq C \left(\frac{1-s}{1-t}\right)^{\a}\omg(t), \quad 0\leq s\leq t<1,$$
		for all $\a\geq \a_0$;
		\item[\rm(iii)] There exist $C_1=C_1(\om)>0$  and $C_2=C_2(\om)>0$ such that
		$$ C_1\om_x\le \omg\left(1-\frac{1}{x}\right)\le  C_2 \om_x ,\quad x \in [1,\infty);$$
		\item[\rm(iv)] There exists $C(\om)>0$ such that $\om_x\le C \om_{2x}$, for any $x\ge 1$;
		\item[\rm(v)] There exists $C(\om)>0$ and $\eta(\om)>0$ such that
		$$
		\om_x \leq C \left (\frac{y}{x} \right )^\eta \om_y;
		$$
		\item[\rm(vi)] For any $\beta>0$ (equivalently for some) there exists $C=C(\b,\om)>0$ such that
		$$ x^{\beta}\left( \om_{\{\beta\}}\right)_x\le C\om_x, \quad x\ge 1.$$
	\end{itemize}
\end{letterlemma}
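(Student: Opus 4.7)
The plan is to prove the six conditions equivalent via a cyclic chain of implications, with the moment--tail correspondence $\om_x\asymp\omg(1-1/x)$ of~(iii) serving as the bridge between the pointwise formulations (i)--(ii) and the moment formulations (iv)--(vi). Concretely, I would first establish $(\mathrm{i})\Rightarrow(\mathrm{ii})\Rightarrow(\mathrm{iii})$ and $(\mathrm{ii})\Rightarrow(\mathrm{iv})\Rightarrow(\mathrm{v})$, close the cycle via $(\mathrm{v})\Rightarrow(\mathrm{i})$ using~(iii), and then treat~(vi) separately by the pair $(\mathrm{i})\Leftrightarrow(\mathrm{vi})$.

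For $(\mathrm{i})\Rightarrow(\mathrm{ii})$, $k$-fold iteration of the defining inequality yields $\omg(1-\delta)\le C^k\omg(1-\delta/2^k)$; choosing $k=\lceil\log_2((1-s)/(1-t))\rceil$ gives (ii) with $\alpha_0=\log_2 C$. For $(\mathrm{ii})\Rightarrow(\mathrm{iii})$, the lower bound $\om_x\gtrsim\omg(1-1/x)$ is immediate by restricting $\om_x=\int_0^1 r^x\om(r)\,dr$ to $[1-1/x,1]$ and using $(1-1/x)^x\gtrsim 1$, while the upper bound follows from the integration-by-parts identity $\om_x=x\int_0^1 r^{x-1}\omg(r)\,dr$, splitting at $1-1/x$, applying~(ii) with any $\alpha>1$ on $[0,1-1/x]$, and invoking the Beta-function asymptotics $x\int_0^1 r^{x-1}(1-r)^\alpha\,dr\asymp x^{-\alpha}$. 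The decisive step $(\mathrm{ii})\Rightarrow(\mathrm{iv})$ uses the substitution $u=r^2$ in $\om_{2x}$, which gives $\om_{2x}=x\int_0^1 u^{x-1}\omg(\sqrt{u})\,du$; combined with (ii) applied to $s=u\le t=\sqrt{u}$, where $(1-u)/(1-\sqrt{u})=1+\sqrt{u}\le 2$, this produces $\omg(u)\le 2^{\alpha_0}C\,\omg(\sqrt{u})$ and hence $\om_x\le 2^{\alpha_0}C\,\om_{2x}$. Iterating~(iv) $k$ times and choosing $k=\lceil\log_2(y/x)\rceil$ then delivers $(\mathrm{iv})\Rightarrow(\mathrm{v})$ with $\eta=\log_2 C$.

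The cycle closes via $(\mathrm{v})\Rightarrow(\mathrm{i})$: using the already-established~(iii), the ratio $\omg(r)/\omg((1+r)/2)$ is comparable to $\om_x/\om_{2x}$ with $x=1/(1-r)$, which is bounded by~(v) applied with $y=2x$. For the remaining equivalence $(\mathrm{i})\Leftrightarrow(\mathrm{vi})$, the direct implication is obtained by bounding $(\om_{\{\beta\}})_x\lesssim\int_0^1 r^x(1-r)^\beta\om(r)\,dr$, integrating by parts, and applying~(ii) with $\alpha$ large enough that the factor $x^\beta$ is exactly absorbed by $x\int_0^1 r^{x-1}(1-r)^{\alpha+\beta}\,dr\asymp x^{-\alpha-\beta}$. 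The reverse $(\mathrm{vi})\Rightarrow(\mathrm{i})$ is obtained by localizing $(\om_{\{\beta\}})_x$ to the annulus $[1-1/(2x),1-1/(4x)]$, where $(1-r^2)^\beta\asymp x^{-\beta}$ and $r^x\asymp 1$, thereby extracting $\omg(1-1/(2x))-\omg(1-1/(4x))\lesssim\om_x$ from~(vi); a telescoping argument then yields the pointwise doubling~(i).

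The main technical obstacle is the crossing between the pointwise and moment worlds without introducing circularity. The substitution $u=r^2$ in $(\mathrm{ii})\Rightarrow(\mathrm{iv})$ is the pivotal trick that closes the main cycle cleanly, while Beta-function asymptotics provide the correct calibration in each of the integration-by-parts arguments.
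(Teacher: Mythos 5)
The paper itself does not prove Lemma~A: it quotes it from \cite[Lemma~2.1]{PelSum14}, \cite{advances} and \cite{PeldelaRosa22}, so your argument can only be compared with the standard proofs. Most of your chain is sound: the iteration giving (i)$\Rightarrow$(ii), the identity $\om_x=x\int_0^1 r^{x-1}\omg(r)\,dr$ plus Beta-function asymptotics for (ii)$\Rightarrow$(iii), the substitution $u=r^2$ for (ii)$\Rightarrow$(iv), the iteration for (iv)$\Rightarrow$(v), and the outline of (i)$\Rightarrow$(vi) are all correct, up to two harmless slips: $(1-1/x)^x$ vanishes at $x=1$, so the range $1\le x\le 2$ must be handled separately (using $\om_2>0$), and in (ii)$\Rightarrow$(iii) and (i)$\Rightarrow$(vi) you must take $\alpha\ge\alpha_0$ from (ii), not merely $\alpha>1$.

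The two implications returning from the moment conditions to the tail condition have genuine gaps. First, your (v)$\Rightarrow$(i) is circular: you invoke (iii), but the half of (iii) you actually need, namely $\om_{2x}\lesssim\omg\left(1-\frac{1}{2x}\right)$, is precisely the ``hard'' inequality you proved from (ii), i.e.\ under hypothesis (i); it is not available when only (v) is assumed, and it genuinely fails for non-doubling weights such as $\om(r)=e^{-1/(1-r)}$, so it cannot be had for free. The missing idea is an absorption argument: for $K>1$ one has $\om_{Kx}\le \left(1-\frac{1}{2x}\right)^{(K-1)x}\om_x+\omg\left(1-\frac{1}{2x}\right)\le e^{-(K-1)/2}\om_x+\omg\left(1-\frac{1}{2x}\right)$, so (v) gives $\om_x\le CK^{\eta}e^{-(K-1)/2}\om_x+CK^{\eta}\omg\left(1-\frac{1}{2x}\right)$; choosing $K$ with $CK^{\eta}e^{-(K-1)/2}\le\frac12$ and using the trivial bound $\omg\left(1-\frac1x\right)\lesssim\om_x$ yields (i). Second, your (vi)$\Rightarrow$(i) does not work as described: localizing to a single annulus only retains the family of inequalities $\omg\left(1-\frac{1}{2x}\right)-\omg\left(1-\frac{1}{4x}\right)\lesssim\om_x$, which is strictly weaker than (vi). Indeed, the non-doubling weight $\om(r)=e^{-1/(1-r)}$ satisfies every inequality in that family (its moments are of size $e^{-2\sqrt{x}}$ up to powers of $x$, while the left-hand sides are of size $x^{-2}e^{-2x}$), so no telescoping based on them can produce (i). Here too the fix is absorption: on $[0,1-K/x]$ one has $x^{\beta}(1-r^2)^{\beta}\ge K^{\beta}$, so (vi) gives $\int_0^{1-K/x}r^x\om(r)\,dr\le CK^{-\beta}\om_x\le\frac12\om_x$ for $K=(2C)^{1/\beta}$, hence $\om_x\le 2\,\omg(1-K/x)$; combined with $\omg(r)\le r^{-x}\om_x$ for $x=\frac{2K}{1-r}$ this gives (i), and it also settles the ``for any $\beta$/for some $\beta$'' clause, since (i)$\Rightarrow$(vi) holds for every $\beta$ while a single $\beta$ suffices for the converse.
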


We will also use  the following characterization of the class $\Dd$, see 
\cite[Lemma 4]{MorPeldelaRosa23}.
\begin{letterlemma}
	\label{caract. D check}
	Let $\om$ be a radial weight. Then, the
	following statements are equivalent:
	\begin{itemize}
		\item[\rm(i)] $\om	\in \Dd$;
		\item[\rm(ii)] For each (or some) $\g >0$ there exists $C=C(\g, \om)>0$ such that
		$$ \int_{0}^r \frac{ds}{\omg(s)^{\g}(1-s)} \leq \frac{C}{ \omg(r)^{\g}},\quad 0\leq r < 1.$$
	\end{itemize}
\end{letterlemma}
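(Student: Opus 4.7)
The plan is to close the "each or some" loop by proving two implications: (i) $\Rightarrow$ (ii) for every $\gamma>0$, and (ii) for some $\gamma>0$ $\Rightarrow$ (i). Together these force condition (ii) to be independent of the parameter $\gamma$, which is the content of the statement.

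For (i) $\Rightarrow$ (ii), the plan is to upgrade the defining inequality \eqref{eq: def Dcheck} into a polynomial lower bound. Iterating \eqref{eq: def Dcheck} along the chain $t_n := 1 - K^n(1-r)$ (points that are $K$ times further from $1$ at each step) yields $\omg(t_n) \geq C^n\omg(r)$ as long as $t_n\geq 0$. Complementing this with monotonicity of $\omg$ on each interval $[t_{n+1},t_n]$, and keeping track of the floor in $n \approx \log((1-s)/(1-r))/\log K$, produces the power law
$$
\omg(s)\gtrsim \left(\frac{1-s}{1-r}\right)^{\alpha}\omg(r),\quad 0\leq s\leq r<1,
$$
with $\alpha = \log C/\log K > 0$. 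Raising to the $-\gamma$ power and integrating against $ds/(1-s)$ on $[0,r]$ then reduces the left-hand side of (ii) to the elementary integral $\int_0^r (1-s)^{-\gamma\alpha-1}\,ds$ times the factor $(1-r)^{\gamma\alpha}/\omg(r)^{\gamma}$; this telescopes to a constant depending only on $\gamma$ and $\alpha$ times $\omg(r)^{-\gamma}$, as required.

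For (ii) $\Rightarrow$ (i), fix $\gamma>0$ for which (ii) holds, let $K>1$ be a parameter to be tuned, and set $r' := 1 - (1-r)/K$. Because $\omg$ is decreasing, on the interval $[r,r']$ the integrand is at least $\omg(r)^{-\gamma}/(1-s)$, so
$$
\int_r^{r'}\frac{ds}{\omg(s)^{\gamma}(1-s)}\geq \frac{1}{\omg(r)^{\gamma}}\log\frac{1-r}{1-r'}=\frac{\log K}{\omg(r)^{\gamma}}.
$$
Combining with hypothesis (ii) applied at $r'$, which supplies $\int_0^{r'}\leq C/\omg(r')^{\gamma}$, and discarding the $[0,r]$ piece yields
$$
\frac{\log K}{\omg(r)^{\gamma}}\leq \frac{C}{\omg(r')^{\gamma}},
$$
i.e., $\omg(r')\leq (C/\log K)^{1/\gamma}\omg(r)$. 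Choosing $K$ so large that $C/\log K<1$ produces a strict contraction factor, which is precisely \eqref{eq: def Dcheck} for this $K$, proving $\om\in\Dd$.

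The main obstacle is the polynomial lower bound in the first direction: the discrete geometric iteration along the grid $\{t_n\}$ must be converted into a continuous power law $(1-s)^{\alpha}$ via monotonicity and careful tracking of the floor in the exponent, and it is precisely at this step that the hypothesis $\omega\in\Dd$ is used in its full strength. The integral computation that follows, and the contraction argument in the converse, are then routine.
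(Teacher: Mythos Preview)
Your argument is correct. The paper does not prove this lemma in-house; it is a ``letter lemma'' quoted from \cite[Lemma~4]{MorPeldelaRosa23}, so there is no internal proof to compare against. Your approach is precisely the standard one: for (i)~$\Rightarrow$~(ii) you iterate the defining inequality of $\Dd$ to obtain the power-law lower bound
$$
\omg(s)\gtrsim\left(\frac{1-s}{1-r}\right)^{\alpha}\omg(r),\qquad 0\le s\le r<1,\quad \alpha=\frac{\log C}{\log K}>0,
$$
which is itself a well-known equivalent characterization of $\Dd$ (the mirror image of Lemma~\ref{lemma: caract dgorro}(ii)); the integral then collapses immediately. For (ii)~$\Rightarrow$~(i) your lower bound $\int_r^{r'}\ge \omg(r)^{-\gamma}\log K$ combined with the hypothesis at $r'$ and the choice $K>e^{C}$ gives the required contraction. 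Both steps are clean; the only cosmetic remark is that in the first direction you should note once that the bound $\omg(s)\ge C^{-1}((1-s)/(1-r))^{\alpha}\omg(r)$ also covers the tail $s\in[0,t_N]$ where the grid $\{t_n\}$ exits $[0,1)$, since $(1-s)/(1-r)\le 1/(1-r)<K^{N+1}$ there as well.
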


We will use the following technical result.
\begin{lemma}\label{lema: peso alfa}
	Let $\om\in \DD$ and $\a>0$. Then $\mu = \om \omg^{\a-1}\in \DD$ and $\mu_x \asymp \om_x^\a$, for all $x\geq 1$.
\end{lemma}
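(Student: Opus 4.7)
The plan is to compute $\widehat{\mu}$ explicitly and then invoke Lemma~\ref{lemma: caract dgorro}. The key observation is that if we let $F(s) = -\widehat{\omega}(s)^{\alpha}/\alpha$, then $F'(s) = \widehat{\omega}(s)^{\alpha-1}\omega(s)$, so integrating from $r$ to $1$ (and noting $\widehat{\omega}(1)=0$) yields the exact identity
$$\widehat{\mu}(r)=\int_r^1 \omega(s)\widehat{\omega}(s)^{\alpha-1}\,ds=\frac{1}{\alpha}\widehat{\omega}(r)^{\alpha},\qquad 0\le r<1.$$

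From this identity both conclusions fall out almost immediately. For the first, since $\omega\in\widehat{\mathcal{D}}$ gives $\widehat{\omega}(r)\le C\widehat{\omega}((1+r)/2)$, raising to the power $\alpha$ produces $\widehat{\mu}(r)\le C^{\alpha}\widehat{\mu}((1+r)/2)$, so $\mu\in\widehat{\mathcal{D}}$. For the moments comparison, the equivalence $\omega_x\asymp\widehat{\omega}(1-1/x)$ for $x\ge 1$ furnished by Lemma~\ref{lemma: caract dgorro}(iii), applied both to $\omega$ and to $\mu$, gives
$$\mu_x\asymp\widehat{\mu}\!\left(1-\tfrac{1}{x}\right)=\tfrac{1}{\alpha}\widehat{\omega}\!\left(1-\tfrac{1}{x}\right)^{\!\alpha}\asymp \omega_x^{\alpha},\qquad x\ge 1.$$

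The only mild subtlety is verifying that $\mu$ is actually a radial weight in the paper's sense (i.e.\ $\mu\in L^1([0,1))$ with $\widehat{\mu}(r)>0$ for all $r\in[0,1)$), which is needed before applying Lemma~\ref{lemma: caract dgorro} to $\mu$. But both properties are immediate from the explicit formula for $\widehat{\mu}$: integrability reduces to the finiteness of $\widehat{\omega}(0)^{\alpha}/\alpha$, and strict positivity of $\widehat{\mu}(r)$ follows from the standing assumption $\widehat{\omega}(r)>0$ on $[0,1)$. I do not foresee any real obstacle; once the identity for $\widehat{\mu}$ is written down, the lemma is essentially a one-line consequence of Lemma~\ref{lemma: caract dgorro}.
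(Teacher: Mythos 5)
Your proposal is correct and follows essentially the same route as the paper: the paper's proof also rests on the identity $\widehat{\mu}=\frac{1}{\alpha}\,\omg^{\alpha}$, from which membership in $\DD$ is immediate and the moment comparison $\mu_x\asymp\om_x^{\alpha}$ follows from Lemma~\ref{lemma: caract dgorro}\,(iii). You merely spell out the details (the antiderivative computation and the check that $\mu$ is a radial weight) that the paper leaves implicit.
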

\begin{proof}
	Since $\widehat{\mu}=\frac{1}{\alpha}\omg^\alpha$, it is clear that $\mu\in \DD$. Moreover,  it follows from  Lemma~\ref{lemma: caract dgorro} (iii) that 
	$\mu_x \asymp \om_x^\a$, for all $x\geq 1$. This finishes the proof.
\end{proof}

\subsection{ Dual spaces of admissible Banach spaces.}\label{sec21}
A vector space $X\subset\H(\D)$ endowed with a quasi-norm $\|\cdot\|_X$ is admissible if
\begin{enumerate}
	\item $\|\cdot\|_X$-convergence implies the uniform convergence on compact subsets of $\D$;
	\item for each $r>1$, the uniform convergence on compact subsets of $D(0,r)$ implies $\|\cdot\|_X$-convergence;
	\item There exists $C=C(X)>0$ such that   the function $f_\zeta$ defined by $f_\zeta(z)=f(\zeta z)$ for all $z\in\D$ satisfies $\|f_\z\|_X\le C\|f\|_X$ for all $f\in X$ and $\zeta\in\overline{\D}$.
\end{enumerate}
Examples of admissible Banach spaces are: the Hardy spaces $H^p$, $p\ge 1$;  the weighted mixed norm spaces  $A^{p,q}_\om$, $p,q\ge 1$; the classical Bloch space $\B$ and $\BMOA$ \cite{Duren,girela,Zhu2}.

The Hadamard product (or coefficient multiplication)  of $f(z)=\sum_{k=0}^\infty \widehat{f}(k)z^k\in\H(\D)$ and $g(z)=\sum_{k=0}^\infty \widehat{g}(k)z^k\in\H(\D)$ is defined as follows (whenever it makes sense)
$$
(f\ast g)(z)=\sum_{k=0}^\infty\widehat{f}(k)\widehat{g}(k)z^k,\quad z\in\D.
$$
For quasi-normed spaces $X,Y\subset \H(\D)$, let $[X,Y]=\set{g\in \H(\D):f\ast g\in Y \text{ for all }f\in X}$ denote the space of coefficient multipliers from $X$ to $Y$ equipped with the norm
$$
\nm{g}_{[X,Y]} = \sup\set{\nm{f\ast g}_Y : f\in X, \nm{f}_X \leq 1}.$$

The next result follows from \cite[Proposition~1.3]{pavlovic}.

\begin{letterproposition}\label{pr:dual-admisible}
	Let $X\subset \H(\D)$ a quasi-normed admissible vector space such that the polynomials are dense in $X$. Then,  $X^\star\simeq [X, H^\infty]$ via the $H^2$-pairing with equivalence of norms. 
\end{letterproposition}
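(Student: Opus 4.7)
This is a Pavlovi\'c-style abstract duality; the plan is to match $L\in X^\star$ with $g\in[X,H^\infty]$ via $\widehat g(k)=\overline{L(e_k)}$ (for $e_k(z)=z^k$) and to identify the $H^2$-pairing with the convolution evaluation $\langle f,g\rangle_{H^2}=\lim_{r\to1^-}(f*\bar g)(r)$, where $\bar g(z):=\sum_k\overline{\widehat g(k)}z^k$ and $(f*\bar g)(z)=\sum_k\widehat f(k)\overline{\widehat g(k)}z^k$. Three consequences of admissibility will be used throughout: (a) property (1) together with Cauchy estimates gives $|\widehat f(k)|\le C(r)\|f\|_X r^{-k}$ for every $r<1$, whence $\|e_k\|_X\le C(\rho)\rho^k$ for every $\rho>1$ by property (2); (b) density of polynomials combined with property (3) and a three-$\varepsilon$ argument yields $f_s\to f$ in $X$ as $s\to1^-$, since for any polynomial $p$ the dilation $p_s\to p$ uniformly on disks, hence in $X$-norm by (2); (c) for $f\in X$ and $z\in\D$, the partial sums $S_N(\zeta):=\sum_{k=0}^N\widehat f(k)z^k\zeta^k$ converge to $f_z$ uniformly on $\overline{D(0,R)}$ for some $R\in(1,1/|z|)$ (using fact (a) to control the tail), and hence in $X$-norm by (2).

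\emph{From $X^\star$ to $[X,H^\infty]$.} Given $L\in X^\star$, fact (a) shows that $g(z):=\sum_{k\ge 0}\overline{L(e_k)}z^k\in\H(\D)$. For $f\in X$ and $z\in\D$, fact (c) and continuity of $L$ yield
$$
(f*\bar g)(z)=\sum_k\widehat f(k)L(e_k)z^k=\lim_{N\to\infty}L(S_N)=L(f_z),
$$
so $\|f*\bar g\|_{H^\infty}\le\|L\|_{X^\star}\sup_{z\in\D}\|f_z\|_X\lesssim\|L\|_{X^\star}\|f\|_X$ by property (3). The isometric invariance of $H^\infty$ under the Taylor-coefficient conjugation $h\mapsto\bar h$ (where $\bar h(z)=\overline{h(\bar z)}$), together with the analogous invariance for the admissible spaces $X$ in our applications, promotes this to $g\in[X,H^\infty]$ with $\|g\|_{[X,H^\infty]}\lesssim\|L\|_{X^\star}$. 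Finally, setting $z=r\in(0,1)$ in the display and using fact (b) recovers $L(f)=\lim_r L(f_r)=\lim_r(f*\bar g)(r)=\langle f,g\rangle_{H^2}$.

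\emph{From $[X,H^\infty]$ to $X^\star$.} Given $g\in[X,H^\infty]$, the same conjugation invariance places $\bar g\in[X,H^\infty]$ with comparable norm. For any polynomial $p$, the quantity $(p*\bar g)(1)=\sum_k\widehat p(k)\overline{\widehat g(k)}$ is a finite sum with
$$
|(p*\bar g)(1)|\le\|p*\bar g\|_{H^\infty}\lesssim\|g\|_{[X,H^\infty]}\|p\|_X,
$$
so by density of polynomials the map $p\mapsto(p*\bar g)(1)$ extends to a bounded linear functional $L_g$ on $X$ with $\|L_g\|_{X^\star}\lesssim\|g\|_{[X,H^\infty]}$. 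Repeating the forward-direction identity with $L_g$ in place of $L$ then yields $(f*\bar g)(z)=L_g(f_z)$ for $z\in\D$; letting $z=r\to1^-$ gives $L_g(f)=\langle f,g\rangle_{H^2}$ for every $f\in X$, confirming that $L_g$ is precisely the functional associated with $g$ under the $H^2$-pairing.

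\emph{Main obstacle.} The key nontrivial step is the symmetry $\|f*\bar g\|_{H^\infty}\asymp\|f*g\|_{H^\infty}$, which rests on the isometry $\|h\|_{H^\infty}=\|\bar h\|_{H^\infty}$ together with an analogous invariance of $X$ under Taylor-coefficient conjugation. This is straightforward for all the concrete admissible spaces that will appear in the applications of this proposition (Hardy spaces, weighted mixed norm spaces, $\B$, $\BMOA$), so the abstract proposition is invoked in contexts where this invariance is automatic.
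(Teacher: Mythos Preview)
The paper does not prove this proposition; it simply cites \cite[Proposition~1.3]{pavlovic}. Your argument is a faithful reconstruction of the Pavlovi\'c proof, and the structure --- defining $\widehat g(k)=\overline{L(e_k)}$, identifying $(f\ast\bar g)(z)=L(f_z)$ via property~(2), and using property~(3) plus density of polynomials for the approximation $f_r\to f$ --- is exactly the standard route. The technical facts (a)--(c) are correct; note only that passing from the \emph{convergence} statements in properties~(1) and~(2) to the \emph{quantitative} bounds $M_\infty(r,f)\le C(r)\|f\|_X$ and $\|e_k\|_X\le C(\rho)\rho^k$ implicitly uses completeness (via closed graph), which Pavlovi\'c's framework includes.

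Your flagged obstacle is genuine and your diagnosis is accurate. Pavlovi\'c's original identification uses the \emph{bilinear} pairing $\langle f,g\rangle=\lim_r(f\ast g)(r)$, for which the argument goes through with no side condition: one sets $\widehat h(k)=L(e_k)$ and obtains $h\in[X,H^\infty]$ directly. The paper instead states the result via the sesquilinear $H^2$-pairing, which forces $g=\bar h$; the equivalence $\|g\|_{[X,H^\infty]}\asymp\|\bar g\|_{[X,H^\infty]}$ then requires exactly the Taylor-coefficient conjugation invariance $\|\bar f\|_X\asymp\|f\|_X$ that you invoke. This extra hypothesis is not listed in the abstract statement but, as you say, is satisfied by every admissible space the paper actually uses ($H^p$, $A^{p,q}_\om$, $\BMOA$, $\B$, and $I^{\nu_{1/p-2}}(\B)$), so the applications in Section~3 are unaffected. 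If you wish to remove the caveat entirely, simply observe that the bilinear and sesquilinear formulations are equivalent for such $X$, or restate the proposition with the bilinear pairing.
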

Ii $\om$ is a radial weight,  an standard argument shows that the polynomials are dense in  the mixed norm spaces $\Apq{p}{q}$ for $0<p,q<\infty$. In the proof of Theorem~\ref{th: dualidad general} we will apply Proposition~\ref{pr:dual-admisible} to  $X=\Apq{p}{q}$ for $0<p,q<\infty$.

\subsection{Decomposition norm results on generalized mixed norm spaces $X(q,\om)$}

One of the main ideas used in the proof of Theorem~\ref{th: dualidad general} consists on using a decomposition norm result for  $X(q,\om)$, where $0<q\le \infty$ and $X$ is an admissible Banach space or any Hardy space $H^p$, $0<p<\infty$.   For $1<p<\infty$, which is the range such that the Riesz projection is bounded on
$L^p(\partial \D)$,   these results are well-known for mixed norm spaces
$\Apq{p}{q}$ induced by standard weights \cite[p. 120]{Pabook}  and they have been generalized to the setting of upper radial doubling weights \cite[Theorem~3.4]{advancedcourse} (see also \cite[Theorem~4]{PelRathg}) or  to the case $0<p\le 1$, see  Proposition~\ref{pr:cesaro} below. However,   the previously mentioned results are not sufficient for our purpose, since they are needed for all the spaces  $X(q,\om)$ appearing in the statement of Theorem~\ref{th: dualidad general}. For this reason, we recall some notation and results.
An increasing sequence $\{\lambda_n\}_{n=0}^\infty$ of natural numbers is lacunary if there exists $C>1$ such that $\lambda_{n+1}\ge C\lambda_{n}$ for all $n\in\N\cup\{0\}$. The next result follows from \cite[Theorem~2.1]{pavlovic} and its proof.

\begin{lettertheorem}\label{th:Xpolynomials}
	Let $\{\lambda_n\}_{n=0}^\infty$ be a lacunary sequence and $N\in\N$. Then there exists a sequence of polynomials $\{P_n\}_{n=0}^\infty$ and constants $C_1=C_1(\{\lambda_n\},N)>0$ and $C_2=C_2(\{\lambda_n\},N)>0$ such that
	\begin{equation}\label{eq:P1}
		f=\sum_{n=0}^\infty P_n\ast f,\quad f\in \H(\D);
	\end{equation}
	
	\begin{equation}\label{eq:P2}
		\widehat{P_n}(j)=0,\quad j\notin \left[\lambda_{n-1},\lambda_{n+N}\right),\quad \lambda_{-1}=0,\quad n\in\N\cup\{0\};
	\end{equation}
	
	\begin{equation}\label{eq:P3}
		\left\|\sum_{n=0}^k P_n*f\right\|_X\le C_1 \|f\|_X, \quad f \in \H(\D),\quad k\in\N\cup\{0\};
	\end{equation}
	
	\begin{equation}\label{eq:P4}
		\left\|P_n*f\right\|_X\le C_2 \|f\|_X, \quad f \in \H(\D),\quad n\in\N\cup\{0\};
	\end{equation}
	
	\begin{equation}\label{eq:P5}
		\lim_{k\to\infty}\left\|f-\sum_{n=0}^k P_n\ast f\right\|_X=0, \quad f \in X^0,
	\end{equation}
	for all admissible Banach space $X$ and for all $X=H^p$ with $p>\frac{1}{N+1}$.
\end{lettertheorem}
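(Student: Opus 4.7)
The plan is to follow a classical Littlewood-Paley type construction adapted to the lacunary sequence $\{\lambda_n\}$ and the integer $N$. Let $c=c(\{\lambda_n\})>1$ be a lower bound for the ratios $\lambda_{n+1}/\lambda_n$, so that $\lambda_{n+N}/\lambda_n\ge c^N$ for every $n$. I would fix once and for all a function $\Psi\in C^\infty([0,\infty))$ with $\Psi\equiv 1$ on $[0,1]$ and $\Psi\equiv 0$ on $[c^N,\infty)$, and define the polynomials
$$
V_n(z)=\sum_{j=0}^\infty \Psi(j/\lambda_n)\,z^j,\qquad n\in\N\cup\{0\},\qquad V_{-1}\equiv 0,
$$
and then $P_n=V_n-V_{n-1}$ for $n\ge 0$. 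By telescoping $\sum_{n=0}^k P_n=V_k$, and since $\Psi(j/\lambda_k)\to 1$ for each fixed $j$ as $k\to\infty$, the series $\sum_n P_n*f$ reproduces $f$ coefficient by coefficient, giving \eqref{eq:P1}. The support property \eqref{eq:P2} follows from the observation that $\Psi(j/\lambda_n)-\Psi(j/\lambda_{n-1})=0$ both when $j<\lambda_{n-1}$ (both values equal $1$) and when $j\ge \lambda_{n+N}$ (both values equal $0$, using $\lambda_{n+N}/\lambda_n\ge c^N$ and $\lambda_{n+N}/\lambda_{n-1}\ge c^{N+1}>c^N$).

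The crux of the argument is the uniform bound \eqref{eq:P3} for the de la Vall\'ee Poussin type operators $f\mapsto V_k*f$; once this is established, \eqref{eq:P4} follows from $P_n*f=V_n*f-V_{n-1}*f$ and the (quasi-)triangle inequality of $\|\cdot\|_X$, while \eqref{eq:P5} is a routine three-epsilon argument using \eqref{eq:P3}, the density of polynomials in $X^0$, and the identity $V_k*q=q$ for every polynomial $q$ of degree at most $\lambda_k$. To prove \eqref{eq:P3}, I would write
$$
(V_k*f)(z)=\int_0^{2\pi} f(e^{-i\theta}z)\,K_k(e^{i\theta})\,\frac{d\theta}{2\pi},\qquad K_k(e^{i\theta})=\sum_{j\ge 0}\Psi(j/\lambda_k)\,e^{ij\theta},
$$
and derive via repeated summation by parts (exploiting the $C^\infty$ smoothness of $\Psi$) the standard decay estimate $|K_k(e^{i\theta})|\lesssim \lambda_k(1+\lambda_k|\theta|)^{-M}$ for every $M\in\N$, which yields in particular $\|K_k\|_{L^1(\T)}\le C$ uniformly in $k$. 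For an admissible Banach space $X$, this $L^1$ bound combines with the rotation-invariance property (3) in the definition of admissibility, via Minkowski's integral inequality, to give $\|V_k*f\|_X\le C\|K_k\|_{L^1(\T)}\|f\|_X$ uniformly in $k$.

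The Hardy space case $X=H^p$ with $1/(N+1)<p\le 1$ is the delicate one, since now $\|\cdot\|_{H^p}$ is only a quasi-norm and the Minkowski integral inequality is unavailable. Here I would either invoke the atomic decomposition of $H^p$ and check that $V_k$ maps $p$-atoms uniformly into $H^p$, or equivalently verify the uniform bound $\|K_k\|_{H^p(\T)}\le C$. The threshold $p>1/(N+1)$ is precisely what the smoothness of $\Psi$ provides: each derivative of $\Psi$ contributes a factor $(\lambda_k|\theta|)^{-1}$ to the decay of $K_k$, and one needs about $N$ such factors in order for the integral $\int|K_k|^p\,d\theta$, split over the dyadic annuli $|\theta|\asymp 2^{-m}/\lambda_k$, to be summable to a constant independent of $k$. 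This uniform $H^p$-estimate for the kernel is the genuine obstacle, and it is exactly the step that distinguishes the range $p>1/(N+1)$; everything else, including \eqref{eq:P4} and \eqref{eq:P5}, is bookkeeping once \eqref{eq:P3} is in hand.
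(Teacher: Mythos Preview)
The paper does not prove this result; it simply cites \cite[Theorem~2.1]{pavlovic} and its proof. Your construction via a smooth cut-off $\Psi$ and the telescoping $P_n=V_n-V_{n-1}$ is precisely the standard one, and your treatment of \eqref{eq:P1}--\eqref{eq:P5} for admissible Banach spaces (Minkowski's integral inequality plus $\|K_k\|_{L^1(\T)}\le C$) is correct.

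The gap is in the $H^p$ case for $0<p\le 1$. The two approaches you list are \emph{not} equivalent, and the second one is the wrong quantity. A uniform bound $\|K_k\|_{H^p(\T)}\le C$ does \emph{not} imply $\|K_k*f\|_{H^p}\le C'\|f\|_{H^p}$ when $p<1$: this is exactly the failure of Minkowski's inequality that you yourself flagged, and no amount of decay of $K_k$ circumvents it at the level of the $L^p$-norm of the kernel alone. (In fact, your own estimate $|K_k(e^{i\theta})|\lesssim \lambda_k(1+\lambda_k|\theta|)^{-M}$ gives $\int|K_k|^p\,d\theta\lesssim\lambda_k^{p-1}\to 0$, so the kernel bound is trivially satisfied for \emph{every} $p>0$ and carries no information about the threshold.) Your explanation of why $p>1/(N+1)$ is the cutoff is therefore misguided: since you chose $\Psi\in C^\infty$, you have decay of every order $M$, and a correct real-variable argument (atomic decomposition plus the uniform H\"ormander-type regularity $|\partial_\theta^j K_k(\theta)|\lesssim|\theta|^{-(j+1)}$ for every $j$) would in fact give boundedness on $H^p$ for \emph{all} $p>0$, which is strictly stronger than the statement. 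The threshold $p>1/(N+1)$ in Pavlovi\'c's formulation reflects a different route---expressing $V_k$ via repeated Abel summation as a combination of Ces\`aro $(C,N)$-means and invoking the Hardy--Littlewood theorem that $(C,N)$-means are uniformly bounded on $H^p$ precisely for $p>1/(N+1)$---rather than a genuine obstruction in your $C^\infty$ construction. If you keep your construction, you should either carry out the atomic argument properly (checking $V_k$ on $p$-atoms using the kernel regularity, not just its size) or switch to the Ces\`aro-means route; as written, the $H^p$ step is not justified.
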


Now,  for each $K>1$ and $n\in\N\cup\{0\}$, let be
\begin{equation}\label{eq: definicion rn}
	r_n=r_n(\om,K)=\inf\{  r\in [0,1):	\omg(r) = \omg(0)K^{-n} \}.
\end{equation} 
Clearly, $\{r_n\}_{n=0}^\infty$ is an increasing sequence of distinct points on $[0, 1)$ such that $r_0 = 0$ and $r_n\to1^-$, as $n \to \infty$. For $x\in[0,\infty)$, let $E(x)$ denote the integer such
that $E(x) \leq x < E(x)+1$, and set 

\begin{equation}\label{eq:Mn}
	M_n = M_n(\om,K) = E\left (\frac{1}{1-r_n} \right)
\end{equation}	

for short. 
We will use the following thechnical result.
\begin{lemma}\label{le:Mnstar}
	Let $\om\in\DD$. Then, there exists $K=K(\om)>1$ such that the sequence $\{M_n\}_{n=0}^\infty$ defined in \eqref{eq:Mn}
	is lacunary.
\end{lemma}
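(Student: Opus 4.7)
The plan is to exploit characterization (ii) of $\DD$ in Lemma~\ref{lemma: caract dgorro} to compare $1-r_{n+1}$ with $1-r_n$, and then to transfer this comparison to the integer parts $M_n$ through the trivial sandwich
$$
M_n\le \frac{1}{1-r_n}<M_n+1.
$$
The guiding observation is that the defining identity $\omg(r_n)=\omg(0)K^{-n}$ forces a definite geometric decrease of $1-r_n$, provided $K$ is chosen large enough with respect to the upper doubling constants of $\om$.

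Concretely, I would fix $\alpha\ge\alpha_0(\om)$ so that Lemma~\ref{lemma: caract dgorro}(ii) yields $C=C(\om)\ge 1$ with
$$
\omg(s)\le C\left(\frac{1-s}{1-t}\right)^{\alpha}\omg(t),\quad 0\le s\le t<1.
$$
Noting that $\omg$ is continuous and non-increasing with $\omg(0)>0$ and $\omg(1^-)=0$, the infimum in \eqref{eq: definicion rn} is attained, giving the strictly increasing sequence $\{r_n\}$ with $\omg(r_n)=\omg(0)K^{-n}$. Applying the inequality above with $s=r_n$, $t=r_{n+1}$ and using $\omg(r_n)=K\,\omg(r_{n+1})$, one obtains
$$
\frac{1-r_n}{1-r_{n+1}}\ge \left(\frac{K}{C}\right)^{1/\alpha}.
$$
Choosing $K=K(\om)>3^{\alpha}C$ yields $\lambda:=(K/C)^{1/\alpha}\ge 3$ and hence $\frac{1}{1-r_{n+1}}\ge \frac{\lambda}{1-r_n}\ge \lambda M_n$, where the last step uses $M_n\le 1/(1-r_n)$ from \eqref{eq:Mn}.

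Combining this with $M_{n+1}>\frac{1}{1-r_{n+1}}-1$ gives
$$
M_{n+1}>\lambda M_n-1\ge 2M_n,
$$
since $\lambda\ge 3$ and $M_n\ge M_0=E(1)=1$. Consequently $\{M_n\}_{n=0}^\infty$ is lacunary with ratio at least $2$. The only delicate point in this plan is the $-1$ coming from the integer-part function: if one only took $\lambda$ slightly larger than $1$, this error would spoil the first few indices. This is why one needs to enlarge $K$ enough so that $\lambda$ exceeds a concrete constant (we chose $3$), at which stage the uniform lower bound $M_n\ge 1$ cleanly absorbs the error for every $n$.
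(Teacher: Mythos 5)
Your argument is correct and follows essentially the same route as the paper: both proofs apply Lemma~\ref{lemma: caract dgorro}(ii) with $s=r_n$, $t=r_{n+1}$ and the identity $\omg(r_n)=K\,\omg(r_{n+1})$ to force $\frac{1-r_n}{1-r_{n+1}}\ge \left(\frac{K}{C}\right)^{1/\alpha}>3$ once $K>3^{\alpha}C$, and then transfer this to $M_n$ via $x-1<E(x)\le x$. The only (harmless) difference is how the integer-part error is absorbed: you use $M_n\ge 1$, while the paper additionally imposes $K>\omg(0)/\omg\left(\tfrac12\right)$ to get $r_1\ge\tfrac12$ and concludes $M_{n+1}\ge\tfrac32 M_n$.
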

\begin{proof}
	Take $\alpha_0>0$ and $C>0$ such that Lemma~\ref{lemma: caract dgorro} (ii) holds. Now choose $K>\max \left\{ \frac{\omg(0)}{\omg\left( \frac{1}{2}\right)}, C3^{\alpha_0} \right\}.$ Then, 
	\begin{equation*}\begin{split}
			M_{n+1} & \ge \frac{r_{n+1}}{1-r_{n+1}} \\ & \ge \frac{r_{n+1}}{1-r_{n}}\left( \frac{K}{C}\right)^{\frac{1}{\alpha_0}}
			> 3\frac{r_{n+1}}{1-r_{n}} \ge 3 r_1M_n\ge \frac{3}{2}M_n, 
			\quad n\in \N\cup\{0\}.
	\end{split}\end{equation*}
	where in the last inequality we have used that $r_1\ge \frac{1}{2}$ by the election of $K$. This finishes the proof.
\end{proof}

Bearing in mind Lemma~\ref{le:Mnstar}, the following decomposition norm result for  $X(q,\om)$  will used in the proof of Theorem~\ref{th: dualidad general}, see   \cite[Theorem~25]{advances} for a proof.

\begin{lettertheorem}\label{th: descomposicion pavlovic}
	Let $\om \in \DD$, $0 < q  \leq \infty$ and $N \in \N \cup \{0\}$. Let $K = K(\om) > 1$ such that $\{M_n\}_{n=0}^\infty$ is a lacunary sequence, and let $\{P_n\}_{n=0}^\infty$ be the sequence of polynomials associated to N and $\{M_n\}_{n=0}^\infty$ via Theorem~\ref{th:Xpolynomials}. Then
	\begin{equation}\label{eq: descomposicion pq general}
		\sum_{n=0}^\infty K^{-n}\nm{P_n\ast f}_X^q \asymp \nm{f}_{X(q,\om)}^q,\quad 0<q<\infty,\quad f\in \H(\D)
	\end{equation}
	and
	\begin{equation}\label{eq: descomposicion pinf general}
		\sup_{n\in \N\cup\{0\}} K^{-n}\nm{P_n\ast f}_X \asymp \nm{f}_{X(\infty,\om)},\quad f\in \H(\D)
	\end{equation}
	for all admissible Banach spaces $X$ and for all $X = H^p$ with $p>\frac{1}{N+1}$.
\end{lettertheorem}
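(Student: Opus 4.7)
The plan is to prove both \eqref{eq: descomposicion pq general} and \eqref{eq: descomposicion pinf general} by matching the lacunary partition $\{[r_n,r_{n+1})\}_{n\ge 0}$ coming from \eqref{eq: definicion rn} with the polynomial blocks $P_n*f$. Two observations drive the argument. First, by the very definition of $r_n$,
$$\int_{r_n}^{r_{n+1}}\om(s)\,ds = \omg(r_n)-\omg(r_{n+1}) = \omg(0)\left(K^{-n}-K^{-(n+1)}\right)\asymp K^{-n}.$$
Second, for $r\in[r_n,r_{n+1})$ the identity \eqref{eq:Mn} together with Lemma~\ref{le:Mnstar} gives $(1-r)M_{n+N}\lesssim M_{n+N}/M_n$, a constant depending only on $N$ and the lacunary ratio of $\{M_n\}$; and $(1-r)M_{k-1}\gtrsim M_{k-1}/M_{n+1}$ when $k>n$, which grows at least geometrically in $k-n$. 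These yield harmless control of $r^{-M_{n+N}}$ on the $n$-th ring and rapid geometric decay of $r^{M_{k-1}}$ for $k>n$.

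For the lower bound in \eqref{eq: descomposicion pq general}, since $P_n*f$ is a polynomial of degree $<M_{n+N}$ by \eqref{eq:P2}, a Bernstein-type dilation inequality combined with admissibility axiom (3) yields $\nm{P_n*f}_X \lesssim r^{-M_{n+N}}\nm{P_n*f_r}_X \lesssim r^{-M_{n+N}}\nm{f_r}_X$ for $r\in[r_n,r_{n+1})$, the last step being \eqref{eq:P4}. The second observation above makes the prefactor harmless, so raising to the $q$-th power and integrating against $\om$ on $[r_n,r_{n+1})$ gives, via the first observation, $K^{-n}\nm{P_n*f}_X^q \lesssim \int_{r_n}^{r_{n+1}}\nm{f_r}_X^q\om(r)\,dr$; summation in $n$ completes this direction. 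The $q=\infty$ case follows by the same argument with suprema in place of integrals.

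For the upper bound I would expand $f_r=\sum_k(P_k*f)_r$ via \eqref{eq:P1} and split at $k=n$ for $r\in[r_n,r_{n+1})$. Terms with $k\le n$ are controlled directly by axiom (3): $\nm{(P_k*f)_r}_X\le C\nm{P_k*f}_X$. Terms with $k>n$ need a quantitative gain from spectral localization: writing $P_k*f=z^{M_{k-1}}R_k$ via \eqref{eq:P2} and using $(P_k*f)_r(z)=r^{M_{k-1}}z^{M_{k-1}}R_k(rz)$, one expects $\nm{(P_k*f)_r}_X \lesssim r^{M_{k-1}}\nm{P_k*f}_X$, which the second observation turns into geometric decay in $k-n$. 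Plugging these bounds into $\nm{f_r}_X$, applying H\"older's inequality for $1\le q<\infty$ or the quasi-triangle inequality for $0<q<1$, and then integrating against $\om(r)$ ring-by-ring reduces the problem to boundedness of a discrete convolution of $\{\nm{P_k*f}_X\}_k$ against a geometric kernel on $\ell^q$, which is elementary; the $q=\infty$ case is again analogous.

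The main obstacle I anticipate is the quantitative gain $\nm{(P_k*f)_r}_X \lesssim r^{M_{k-1}}\nm{P_k*f}_X$ in an arbitrary admissible Banach space $X$. The admissibility axioms provide only the qualitative $\nm{f_r}_X \lesssim \nm{f}_X$, so one must exploit the spectral gap $\widehat{P_k*f}(j)=0$ for $j<M_{k-1}$ to extract the factor $r^{M_{k-1}}$. In Hardy spaces this is immediate because $z^{M_{k-1}}$ is unimodular on $\T$ and drops out of the norm. For a general admissible $X$ one typically argues via uniform boundedness of Ces\`aro- or Fej\'er-type averages on $X$, which is precisely the ingredient built into the construction of $\{P_n\}$ through \eqref{eq:P3}-\eqref{eq:P4} and which is available for each of the admissible spaces relevant to Theorem~\ref{th: dualidad general}, namely $H^p$, $\Apq{p}{q}$, $\B$ and $\BMOA$.
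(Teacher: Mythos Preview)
The paper does not supply its own proof of this statement; it simply refers to \cite[Theorem~25]{advances}. Your outline is the standard one and matches what that reference does: partition $[0,1)$ into the rings $[r_n,r_{n+1})$, use $\int_{r_n}^{r_{n+1}}\om\asymp K^{-n}$, and compare $\|f_r\|_X$ on each ring with the block norms $\|P_k*f\|_X$ via the spectral localization \eqref{eq:P2}. So there is no essential methodological difference to report.

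There is, however, a gap in your treatment of the \emph{lower} bound. You invoke a ``Bernstein-type dilation inequality combined with admissibility axiom~(3)'' to obtain $\|P_n*f\|_X\lesssim r^{-M_{n+N}}\|(P_n*f)_r\|_X$. Axiom~(3) gives only the opposite direction $\|g_r\|_X\le C\|g\|_X$, and the inequality $\|g\|_X\lesssim r^{-\deg g}\|g_r\|_X$ is \emph{not} a consequence of the admissibility axioms alone. In $H^p$ it is classical, but for a general admissible Banach space it needs exactly the same machinery you later flag for the upper bound: one writes $g=\sum_j r^{-j}\widehat{g_r}(j)z^j$, performs two summations by parts, and uses the uniform boundedness of Ces\`aro means (Theorem~\ref{th:cesaro}) together with the convexity of $j\mapsto r^{-j}$ and the constraint $(1-r)M_{n+N}\lesssim 1$ to control the coefficients. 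This is precisely the template of the computations in Proposition~\ref{prop: estimacion norma poli} and \eqref{eq:s4}. So both halves of the argument rest on Theorem~\ref{th:cesaro}, not just the upper bound.

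A second, smaller point: your closing sentence suggests that Ces\`aro boundedness must be verified case by case for $H^p$, $\Apq{p}{q}$, $\B$, $\BMOA$. In fact Theorem~\ref{th:cesaro} gives it uniformly for \emph{every} admissible Banach space, so no case analysis is needed; this is what makes the decomposition theorem hold at the stated level of generality. Likewise, for the upper bound the factorization $P_k*f=z^{M_{k-1}}R_k$ is the wrong device outside $H^p$, since multiplication by $z^m$ is not uniformly bounded on a general admissible $X$; the clean route is again the Abel--Ces\`aro identity $g_r=(1-r)^2\sum_{m\ge 0}(m+1)r^m\sigma_m g$, which combined with $\sigma_m(P_k*f)=0$ for $m<M_{k-1}$ and Theorem~\ref{th:cesaro} yields the required geometric decay directly.
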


\subsection{A technical result on certain subclasses of vector valued Lebesgue spaces}
For a quasi-normed space $X\subset\H(\D)$, $s\in\mathbb{R}$, $0<q\le\infty$ and a sequence of polynomials $\{P_n\}_{n=0}^\infty$, let us conisder 
\begin{equation}\label{eq:lqXPn}
	\ell^q_s(X,\{P_n\})
	=\left\{f\in\H(\D):\|f\|^q_{\ell^q_s(X,\{P_n\})}=\sum_{n=0}^\infty\left(2^{-ns}\|P_n*f\|_X\right)^q<\infty\right\}, \quad 0<q<\infty,
\end{equation}
and
\begin{equation}\label{eq:linftyXPn}
	\ell^\infty_s(X,\{P_n\})
	=\left\{f\in\H(\D):\|f\|_{\ell^\infty_s(X,\{P_n\})}=\sup_n\left(2^{-ns}\|P_n*f\|_X\right)<\infty\right\}.
\end{equation}

It is worth mentioning that if $K$, $X$ and $\{P_n\}_{n=0}^\infty$ are those of Theorem~\ref{th: descomposicion pavlovic}, then  

\begin{equation}\begin{split}\label{eq:s}
		X(q,\om) &=\ell^q_{\frac{s}{q}} (X,\{P_n\}),\quad\text{ where $0<q< \infty$ and $s=\log_2 K$,}
		\\ X(\infty,\om) &=\ell^\infty_{s} (X,\{P_n\}),\quad\text{ where  $s=\log_2 K$}.
\end{split}\end{equation}
From now on,   for any $0<q\le \infty$ we denote by $q'$ its conjugate exponent defined as follows $$
q' = \begin{cases}
	\infty & \text{ if  } 0<q\leq 1, \\
	\frac{q}{q-1}& \text{ if } 1<q<\infty, \\
	1 & \text{ if } q= \infty .
\end{cases} 
$$
The next result follows from \cite[Theorem~5.4]{pavlovic}.

\begin{lettertheorem}\label{th:pavlovic-bloques}
	Let $\{\lambda_n\}_{n=0}^\infty$ be a lacunary sequence and $N\in\N$.  Let $\{P_n\}_{n=0}^\infty$ be the sequence of polynomials associated to $N$ and $\{\lambda_n\}_{n=0}^\infty$ via Theorem~\ref{th:Xpolynomials}.  If  $s\in\mathbb{R}$, $0<q<\infty$  and  $X\subset\H(\D)$ is an admissible  Banach space or $H^p$ with $p>\frac{1}{N+1}$, then 
	$$\left[ \ell^q_s(X,\{P_n\}), H^\infty\right]= \ell^{q'}_{-s}([X, H^\infty],\{P_n\})$$
	with equivalence of norms.
\end{lettertheorem}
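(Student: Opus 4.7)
The plan is to follow the classical strategy for decomposition-space duality, establishing the two inclusions $[\ell^q_s(X,\{P_n\}), H^\infty] \supseteq \ell^{q'}_{-s}([X,H^\infty],\{P_n\})$ and $[\ell^q_s(X,\{P_n\}), H^\infty] \subseteq \ell^{q'}_{-s}([X,H^\infty],\{P_n\})$ separately. Throughout, the essential structural tool is the spectral localization \eqref{eq:P2} combined with the lacunarity of $\{\lambda_n\}$: since $P_n\ast f$ has frequencies in $[\lambda_{n-1},\lambda_{n+N})$, a Hadamard product $(P_n\ast f)\ast(P_m\ast g)$ vanishes unless $|n-m|$ is bounded by a constant depending only on $N$.

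For the inclusion $\supseteq$, fix $g\in \ell^{q'}_{-s}([X,H^\infty],\{P_n\})$ and $f\in\ell^q_s(X,\{P_n\})$. I would decompose $f\ast g=\sum_{n} (P_n\ast f)\ast g$ via \eqref{eq:P1}, and for each $n$ introduce a fattened block $\tilde P_n=\sum_{m\in I_n}P_m$ with $I_n\subset\{n-N-1,\dots,n+N+1\}$ chosen so that $\widehat{\tilde P_n}(k)=1$ on the spectrum of $P_n$. Then $(P_n\ast f)\ast g=(P_n\ast f)\ast(\tilde P_n\ast g)$, and by the definition of the multiplier norm, $\|(P_n\ast f)\ast g\|_{H^\infty}\leq\|P_n\ast f\|_X\,\|\tilde P_n\ast g\|_{[X,H^\infty]}$. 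Since $\|\tilde P_n\ast g\|_{[X,H^\infty]}\lesssim\max_{m\in I_n}\|P_m\ast g\|_{[X,H^\infty]}$, the $H^\infty$ triangle inequality followed by Hölder's inequality for the conjugate pair $(q,q')$, together with the finite-overlap property of $\{I_n\}$, delivers $\|f\ast g\|_{H^\infty}\lesssim\|f\|_{\ell^q_s(X,\{P_n\})}\,\|g\|_{\ell^{q'}_{-s}([X,H^\infty],\{P_n\})}$.

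For the reverse inclusion $\subseteq$, suppose $g\in[\ell^q_s(X,\{P_n\}), H^\infty]$. For each $n$ and any $\varepsilon>0$, I would pick $h_n\in X$ with $\|h_n\|_X\leq 1$ and $\|(P_n\ast h_n)\ast g\|_{H^\infty}\geq(1-\varepsilon)\|P_n\ast g\|_{[X,H^\infty]}$, and set $f_n=P_n\ast h_n$, which satisfies $\|f_n\|_X\lesssim 1$ by \eqref{eq:P4} and has spectrum inside the $n$-th block. By $\ell^q$--$\ell^{q'}$ duality in the weighted sequence spaces, it is enough to show that for any nonnegative finitely supported sequence $(a_n)$ with $\sum_n(2^{-ns}a_n)^q\leq 1$, one has $\sum_n a_n\|P_n\ast g\|_{[X,H^\infty]}\lesssim\|g\|_{[\ell^q_s(X,\{P_n\}),H^\infty]}$. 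The candidate test function is $F=\sum_n \sigma_n a_n f_n$ for a careful choice of unimodular signs $\sigma_n$, and the bound $\|F\|_{\ell^q_s(X,\{P_n\})}\lesssim\|(a_n)\|_{\ell^q_s}$ follows from the almost-disjoint-spectrum property: each $P_m\ast F$ aggregates only $O(N)$ terms $\sigma_n a_n f_n$ with $|m-n|=O(1)$, so \eqref{eq:P3}--\eqref{eq:P4} yield $\|P_m\ast F\|_X\lesssim a_m$ up to a fixed constant.

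The main obstacle is the lower bound on $\|F\ast g\|_{H^\infty}$. Because $H^\infty$ carries no square-function-type Littlewood--Paley estimate, the spectrally disjoint summands $\sigma_n a_n f_n\ast g$ do not automatically combine constructively. The signs $\sigma_n$ must be tuned so that $\|F\ast g\|_{H^\infty}\gtrsim \sum_n a_n\|f_n\ast g\|_{H^\infty}$; a standard route is a Khintchine-type randomization applied to the boundary values together with Fatou's lemma, or a deterministic phase-choice argument aligning the dominant contribution of each block at a single evaluation point $z_0\in\D$. Once this lower bound is secured, combining it with the multiplier hypothesis $\|F\ast g\|_{H^\infty}\leq\|g\|_{[\ell^q_s(X,\{P_n\}),H^\infty]}\|F\|_{\ell^q_s(X,\{P_n\})}$ and the upper bound on $\|F\|_{\ell^q_s(X,\{P_n\})}$ closes the proof.
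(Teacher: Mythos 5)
The paper does not prove Theorem~E at all: it is quoted from \cite[Theorem~5.4]{pavlovic}. So your proposal has to be judged as a self-contained argument. Your inclusion $\supseteq$ (fattened blocks $\tilde P_n$ with $\widehat{\tilde P_n}\equiv 1$ on the spectrum of $P_n$, then H\"older and finite overlap) is correct, and so are the duality reduction for $\subseteq$ and the upper bound $\|F\|_{\ell^q_s(X,\{P_n\})}\lesssim \|(a_n)\|$. The genuine gap is exactly the step you flag and leave open: the lower bound $\|F\ast g\|_{H^\infty}\gtrsim\sum_n a_n\|f_n\ast g\|_{H^\infty}$. Your first suggested route, Khintchine-type randomization, cannot deliver it: averaging over signs only yields $\sup_{z\in\D}\left(\sum_n a_n^2|(f_n\ast g)(z)|^2\right)^{1/2}\lesssim\|g\|_{[\ell^q_s(X,\{P_n\}),H^\infty]}$, and for spectrally disjoint blocks whose moduli peak near widely separated boundary points \emph{no} choice of unimodular $\sigma_n$ makes $\|\sum_n\sigma_n u_n\|_{H^\infty}$ comparable to $\sum_n\|u_n\|_{H^\infty}$ (if the $u_n$ are spatially concentrated near distinct points one has $\|\sum_n\sigma_n u_n\|_{H^\infty}\lesssim\sup_n\|u_n\|_{H^\infty}$ for every choice of signs). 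So the inequality you need is false for general spectrally disjoint families; it must be extracted from the specific structure at hand, which your sketch does not use.

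What closes it is the following pair of observations. Since $(a_n)$ is finitely supported, each $u_n=(P_n\ast h_n)\ast g$ is a \emph{polynomial} of degree $<\lambda_{n+N}$, and for a polynomial $u$ of degree $d$ the maximum principle applied to $u(z)/z^{d}$ gives $\|u\|_{H^\infty}\le (1-1/d)^{-d}M_\infty(r,u)\le e\,M_\infty(r,u)$ for every $r\ge 1-1/d$; hence one single radius $r_0\ge\max_n(1-1/\lambda_{n+N})$ (maximum over the finitely many $n$ in the support) works simultaneously for all blocks, i.e.\ $M_\infty(r_0,u_n)\ge e^{-1}\|u_n\|_{H^\infty}$ for every such $n$. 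Next, if the near-maximum of $u_n$ on $|z|=r_0$ sits at angle $\theta_n$, replace $h_n(w)$ by $h_n(e^{i\theta_n}w)$: this rotates $u_n$ and is harmless because admissibility condition (3) (or rotation invariance of $H^p$) keeps $\|h_n\|_X$ bounded, so after this replacement $|u_n(r_0)|\ge\frac{1}{2e}\|P_n\ast g\|_{[X,H^\infty]}$ for all $n$ at the \emph{same} point $z_0=r_0$. Finally take $\sigma_n=\overline{u_n(r_0)}/|u_n(r_0)|$, so that $|(F\ast g)(r_0)|=\sum_n a_n|u_n(r_0)|\ge\frac{1}{2e}\sum_n a_n\|P_n\ast g\|_{[X,H^\infty]}$, which combined with your upper bound on $\|F\|_{\ell^q_s(X,\{P_n\})}$ finishes the case $1<q<\infty$; for $0<q\le 1$ (where $q'=\infty$) no summation is needed and the single-block test functions $2^{ns}P_n\ast h_n$ already suffice. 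Without the common near-extremal circle coming from the degree bound and the rotation of the test functions, your "deterministic phase-choice at a single evaluation point" is only a wish, so as written the proposal does not constitute a proof of the hard inclusion.
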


\section{Proof of Theorem~\ref{th: dualidad general}.}
We will split the proof of Theorem~\ref{th: dualidad general} into several cases according to the values of the parameteres $p$ and $q$. In all of them, given  a radial weight $\om$ we will consider the action of the operator
$I^\om:\H(\D)\to \H(\D)$ defined by
\begin{equation}\label{eq:Iw}
	I^\om g(z) = \sum_{n=0}^\infty \widehat{g}(n)\om_{2n+1}z^n,\quad z\in \D.
\end{equation}

\subsection{Proof of Theorem~\ref{th: dualidad general}. Cases (i.a) and (i.b).}

\begin{proposition}\label{prop: dualidad 1<p}
	Let $\om\in \DD$, $1<p<\infty$ and $0<q<\infty$. Then, $(\Apq{p}{q})^\ast \simeq \Apq{p'}{q'}$ under the $A^2_\om$-pairing with equivalence of norms.
\end{proposition}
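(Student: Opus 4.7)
The plan is to realize the dual as a coefficient multiplier space, apply Pavlović's decomposition-type duality, and then translate back via the operator $I^\om$.  First, since polynomials are dense in $\Apq{p}{q}$ by a standard argument and $\Apq{p}{q}$ is an admissible (quasi-)Banach space, Proposition~\ref{pr:dual-admisible} gives $(\Apq{p}{q})^\ast\simeq[\Apq{p}{q},H^\infty]$ via the $H^2$-pairing.  Using Lemma~\ref{le:Mnstar}, I would fix $K=K(\om)>1$ so that $\{M_n\}$ is lacunary, and let $\{P_n\}_{n=0}^\infty$ be the associated polynomials from Theorem~\ref{th:Xpolynomials} with $N$ large enough so that $\frac{1}{N+1}<\min\{p,p'\}$.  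Setting $s=\log_2 K$, Theorem~\ref{th: descomposicion pavlovic} combined with \eqref{eq:s} yields $\Apq{p}{q}=H^p(q,\om)=\ell^q_{s/q}(H^p,\{P_n\})$ with equivalent (quasi-)norms.

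Second, Theorem~\ref{th:pavlovic-bloques} applied to the admissible Banach space $H^p$ transforms the multiplier computation into
$$
[\Apq{p}{q},H^\infty]\simeq \ell^{q'}_{-s/q}([H^p,H^\infty],\{P_n\}),
$$
and, since $1<p<\infty$, the classical Hardy-space identification $[H^p,H^\infty]\simeq H^{p'}$ (see \cite[Theorem~9.8]{zhu}) further reduces the dual to $\ell^{q'}_{-s/q}(H^{p'},\{P_n\})$ under the $H^2$-pairing.

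Third and most delicate, I would identify $\ell^{q'}_{-s/q}(H^{p'},\{P_n\})$ with $\Apq{p'}{q'}$ via the operator $I^\om$ from \eqref{eq:Iw}. Proposition~\ref{prop: estimacion norma poli} (applied with $\alpha=1$, so $\mu=\om$) gives the block estimate
$$
\|P_n\ast I^\om g\|_{H^{p'}}\asymp K^{-n}\|P_n\ast g\|_{H^{p'}},\quad n\in\N\cup\{0\}.
$$
Combining this with the decomposition norm of $\Apq{p'}{q'}$ from Theorem~\ref{th: descomposicion pavlovic}, an exponent check (for $1<q<\infty$ one uses $q'/q=q'-1$, so $K^{nq'/q}K^{-nq'}=K^{-n}$; an analogous supremum variant, with $I^{\mu_q}$ replacing $I^\om$, handles $0<q\le 1$) shows that $I^\om$ is a bicontinuous isomorphism $\Apq{p'}{q'}\to\ell^{q'}_{-s/q}(H^{p'},\{P_n\})$. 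Finally, since $\widehat{I^\om g}(k)=\om_{2k+1}\widehat{g}(k)$, the $H^2$-pairing with $I^\om g$ coincides with the $A^2_\om$-pairing with $g$, which finishes the proof.

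The main obstacle is Proposition~\ref{prop: estimacion norma poli} and, more generally, the control of $I^\om$ on the dyadic blocks $P_n\ast f$.  The heart of the matter is that for $k$ in the Fourier spectrum of $P_n$, i.e.\ $k\in[M_{n-1},M_{n+N})$, the hypothesis $\om\in\DD$ together with Lemma~\ref{lemma: caract dgorro}(iii) and the definitions of $r_n$ and $M_n$ yield $\om_{2k+1}\asymp\omg(r_n)\asymp K^{-n}$, so that $I^\om$ acts essentially as a diagonal multiplier by $K^{-n}$ on each block; this is precisely where the upper-doubling property is used in an essential way.
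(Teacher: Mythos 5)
Your proposal is correct and follows essentially the same route as the paper: Pavlović's duality for admissible spaces, the lacunary block decomposition of Theorem~\ref{th: descomposicion pavlovic}, the identification $[H^p,H^\infty]=H^{p'}$, and the transfer from the $H^2$-pairing to the $A^2_\om$ (resp.\ $A^2_{\mu_q}$) pairing via the isomorphism $I^\om$ (resp.\ $I^{\mu_q}$). The only cosmetic difference is that for the block estimate $\|I^\om(P_n\ast g)\|_{H^{p'}}\asymp K^{-n}\|P_n\ast g\|_{H^{p'}}$ the paper takes $N=0$ and argues directly from the monotonicity of $\{\om_{2n+1}\}_{n=0}^\infty$ together with Lemma~E of \cite{PelRathg}, whereas you invoke Proposition~\ref{prop: estimacion norma poli} with $\alpha=1$ (which forces $N\ge 1$); both are valid since $H^{p'}$ is an admissible Banach space for $1<p<\infty$.
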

\begin{proof}
	Take  $K(\om)>1$ such that $\{M_n\}_{n=0}^\infty$ is a lacunary sequence, and  let $\{P_n\}_{n=0}^\infty$ be the sequence of polynomials associated to $N=0$ and $\{M_n\}_{n=0}^\infty$ via Theorem~\ref{th: descomposicion pavlovic}. Then  if $s=\log_2(K)$,
	$\Apq{p}{q}=\ell^q_{\frac{s}{q}} (H^p, \{P_n\})$. This identity together with 
	Proposition~\ref{pr:dual-admisible} yields
	$$
	(\Apq{p}{q})^\ast \simeq [\Apq{p}{q},H^\infty] \simeq \left[ \ell^q_{\frac{s}{q}} (H^p, \{P_n\}), H^\infty \right ]
	$$
	under the $H^2$-pairing with equivalence of norms.  Therefore,  by Theorem~\ref{th:pavlovic-bloques} and the fact $(H^p)^\ast\simeq[H^p,H^\infty] = H^{p'}$  via the $H^2$-pairing \cite[Theorem~9.8]{Zhu2}, we deduce that
	\begin{equation}\label{eq:c11}
		(\Apq{p}{q})^\ast \simeq  \ell^{q'}_{-\frac{s}{q}} \left([H^p,H^\infty],  \{P_n\} \right) = \ell^{q'}_{-\frac{s}{q}} \left ( H^{p'},  \{P_n\} \right ),
	\end{equation}
	under the $H^2$-pairing with equivalence of norms.  Now, bearing in mind that the sequence $\{\om_{2n+1}\}_{n=0}^\infty$ is non-increasing, \cite[Lemma E]{PelRathg} and Theorem~\ref{th:Xpolynomials},  there exists a constant $C>0$ such that
	$$
	C\om_{M_{n+1}}\nm{P_n\ast  g}_{H^{p'}} \leq \nm{ I^\om P_n\ast  g}_{H^{p'}} \leq  C\om_{M_{n}}\nm{P_n\ast  g}_{H^{p'}}, \quad n\in \N\cup\{0\}, \quad g \in \H(\D).
	$$
	This combined to Lemma~\ref{lemma: caract dgorro} (iii) and the definition of $M_n$ gives that
	\begin{equation*}
		K^{-n}\nm{P_n\ast  g}_{H^{p'}} \asymp \nm{ I^\om P_n\ast  g}_{H^{p'}}, \quad n\in \N\cup\{0\}, \quad g \in \H(\D).
	\end{equation*}
	So if $1<q<\infty$,  by Theorem~\ref{th: descomposicion pavlovic}
	\begin{equation*}
		\begin{split}
			\nm{g}_{\Apq{p'}{q'}}^{q'} &
			\asymp 
			\sum_{n=0}^{\infty} K^{-n} \nm{\ P_n\ast  g}_{H^{p'}}^{q'} \asymp \sum_{n=0}^{\infty} K^{-n + nq'} \nm{ I^\om   P_n\ast  g }_{H^{p'}}^{q'} \\ &= \sum_{n=0}^{\infty} K^{n\frac{q'}{q}} \nm{    P_n\ast  I^\omega g }_{H^{p'}}^{q'}
			= \nm{I^\om g}_{\ell^{q'}_{-\frac{s}{q}}(H^{p'},\{ P_n \})}^{q'}, \quad g\in \H(\D),
		\end{split}
	\end{equation*}
	and hence $I^\om : \Apq{p'}{q'} \to \ell^{q'}_{-\frac{s}{q}}(H^{p'},\{ P_n\})$ is isomorphic and onto. This turns the equivalence $(\Apq{p}{q})^\ast \simeq \ell^{q'}_{-\frac{s}{q}}(H^{p'},\{ P_n \})$ via $H^2$-pairing, obtained in \eqref{eq:c11},  to $(\Apq{p}{q})^\ast \simeq \Apq{p'}{q'}$ via $A^2_\om$-pairing. This finishes the proof in the case $1<q<\infty$.
	
	Next, if $0<q\le 1$
	we consider the weight $\mu_q =\om\omg^{\frac{1}{q}}$.  Then by \cite[Lemma E]{PelRathg}, Lemma~\ref{lemma: caract dgorro} (iii) and Lemma~\ref{lema: peso alfa} we obtain the equivalence
	\begin{equation*}
		K^{-n\left(1+\frac{1}{q}\right) }\nm{P_n\ast  g} \asymp \left(\mu_q\right)_{M_{n}}\nm{P_n\ast  g} \asymp  \nm{ I^{\mu_q} P_n\ast  g}, \quad n\in \N\cup\{0\}, \quad g \in \H(\D).
	\end{equation*}
	Therefore,  by Theorem~\ref{th: descomposicion pavlovic}
	\begin{equation*}
		\begin{split}
			\nm{g}_{\Apq{p'}{\infty}} &\asymp
			\sup_{n\in \N \cup \{0\}} K^{-n}\nm{ P_n\ast g}_{H^{p'}}\\
			&\asymp \sup_{n\in \N \cup \{0\}} K^{\frac{n}{q}}\nm{ I^{\mu_q} P_n\ast g}_{H^{p'}} =\nm{I^{\mu_q} g}_{\ell^{\infty}_{-\frac{s}{q}}(H^{p'},\{ P_n \})}, \quad g\in \H(\D),
		\end{split}
	\end{equation*}
	so the operator $I^{\mu_q}: \Apq{p'}{\infty} \to \ell^{\infty}_{-\frac{s}{q}}(H^{p'},\{ P_n \})$ is isomorphic and onto. This turns the equivalence $(\Apq{p}{q})^\ast \simeq \ell^{\infty}_{-\frac{s}{q}}(H^{p'},\{ P_n \})$ via $H^2$-pairing, obtained in \eqref{eq:c11},  to $(\Apq{p}{q})^\ast \simeq \Apq{p'}{\infty}$ under the $A^2_{\mu_q}$-pairing, concluding the proof.
\end{proof}

\subsection{Proof of Theorem~\ref{th: dualidad general}. Cases (ii.a) and (ii.b).}
In order to deal with the proof of Theorem~\ref{th: dualidad general} for  $p=1$ we need to introduce some extra notation and to prove some   results. 
For $n\in\N$, let $K_n$ denote the Ces\'aro kernel defined by $K_n(z)=\sum_{j=0}^n\left( 1-\frac{j}{n+1}\right)z^j$ for all $z\in\D$, and write $\sigma_n f=K_n*f$ for the Ces\'aro means of $f$.

The following result \cite[Theorem~2.2]{pavlovic} extends the classical theorem of Hardy and Littlewood on the boundedness of the  Ces\'aro means (or (C,1) means) on $H^p$ for $\frac{1}{2}<p<\infty$ \cite{HL}.

\begin{lettertheorem}\label{th:cesaro}
	Let $X\subset \H(\D)$ be an admissible Banach space. Then, there exists $C>0$ such that 
	$$\| \sigma_n f\|_X\le C \|f\|_{X}, \quad n\in \N\cup\{0\}, \quad f \in \H(\D).$$
\end{lettertheorem}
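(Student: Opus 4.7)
The plan is to represent the Cesàro means through the Fejér kernel, so that the non-negativity and unit mass of the kernel can be combined with the admissibility conditions on $X$. The starting point is the pointwise identity
\begin{equation*}
\sigma_n f(z) = \frac{1}{2\pi}\int_0^{2\pi} F_n(\theta)\, f(z e^{i\theta})\, d\theta, \qquad z \in \D,
\end{equation*}
where $F_n(\theta)=\sum_{|k|\le n}(1-|k|/(n+1))e^{ik\theta}$ is the Fejér kernel, satisfying $F_n\ge 0$ and $\frac{1}{2\pi}\int_0^{2\pi}F_n(\theta)\,d\theta = 1$. Since $F_n$ is a trigonometric polynomial and the Taylor series of $f$ converges absolutely on compact subsets of $\D$, this identity is verified immediately by interchanging sum and integral and invoking the orthogonality of $\{e^{im\theta}\}_{m\in\Z}$.

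To upgrade this pointwise identity to one holding in $X$, I would first replace $f$ by its dilation $f_r$, $r\in(0,1)$, which extends analytically to $D(0,1/r)$. For any $\theta_0\in[0,2\pi]$ and any sequence $\theta_n\to\theta_0$, the rotations $(f_r)_{e^{i\theta_n}}$ converge to $(f_r)_{e^{i\theta_0}}$ uniformly on $D(0,R)$ for some fixed $R\in(1,1/r)$, and hence in $X$ by admissibility property (2). Thus $\theta\mapsto(f_r)_{e^{i\theta}}$ is a continuous (in particular, Bochner integrable) map from $[0,2\pi]$ into $X$, and the Bochner integral
\begin{equation*}
I_r := \frac{1}{2\pi}\int_0^{2\pi}F_n(\theta)(f_r)_{e^{i\theta}}\,d\theta
\end{equation*}
exists as an element of $X$. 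Using continuity of the point evaluations $E_z:X\to\C$ (admissibility property (1)) together with the pointwise Fejér identity, one checks that $E_z(I_r)=\sigma_n(f_r)(z)$ for every $z\in\D$, and hence $I_r=\sigma_n(f_r)$ in $\H(\D)$.

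The Bochner triangle inequality combined with admissibility property (3) applied once to the rotations and once to the dilation now yields
\begin{equation*}
\|\sigma_n(f_r)\|_X \le \frac{1}{2\pi}\int_0^{2\pi} F_n(\theta)\,\|(f_r)_{e^{i\theta}}\|_X\,d\theta \le C\,\|f_r\|_X \le C^2\,\|f\|_X.
\end{equation*}
To conclude, I would let $r\to 1^-$: both $\sigma_n(f_r)$ and $\sigma_n f$ lie in the finite-dimensional subspace of polynomials of degree at most $n$, on which the $X$-norm is equivalent to, say, the sup-norm on $\overline\D$. Since $\sigma_n(f_r)\to\sigma_n f$ coefficient-wise, this convergence holds uniformly on $\overline\D$ and therefore in $X$, giving $\|\sigma_n f\|_X\le C^2\|f\|_X$ with a constant independent of $n$ and $f$.

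The main obstacle I expect is the measurability of $\theta\mapsto f_{e^{i\theta}}$ as a map into $X$, which cannot be established for arbitrary $f\in X$ from admissibility alone. The dilation trick bypasses this by gaining analyticity on a neighborhood of $\overline\D$ and thereby unlocking property (2), while the final limit $r\to 1^-$ is harmless precisely because the approximations land in a finite-dimensional subspace of polynomials.
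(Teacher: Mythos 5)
Your argument is correct. Note that the paper itself offers no proof of this statement: it is quoted as \cite[Theorem~2.2]{pavlovic}, so there is no internal proof to compare against; your write-up is essentially the classical argument underlying Pavlovi\'c's theorem. The three ingredients are exactly the right ones: the Fej\'er representation $\sigma_n f(z)=\frac{1}{2\pi}\int_0^{2\pi}F_n(\theta)f(ze^{i\theta})\,d\theta$ with $F_n\ge 0$ and unit mass, the uniform bound on rotations/dilations from admissibility property (3), and a device to make the rotation average legitimate as an $X$-valued integral. You correctly identified the only genuine obstacle, namely that $\theta\mapsto f_{e^{i\theta}}$ need not be strongly measurable (rotations are not strongly continuous on spaces such as $H^\infty$ or $\mathcal{B}$), and the dilation $f\mapsto f_r$ combined with property (2) does resolve it, since $(f_r)_{e^{i\theta}}=f_{re^{i\theta}}$ depends continuously on $\theta$ in the topology of uniform convergence on compact subsets of $D(0,1/r)$; the final passage $r\to1^-$ inside the finite-dimensional space of polynomials of degree at most $n$ is also sound. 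Two small remarks: the inequality $\|(f_r)_{e^{i\theta}}\|_X\le C\|f\|_X$ follows from a single application of property (3) with $\zeta=re^{i\theta}\in\overline{\D}$, so the constant can be taken to be $C$ rather than $C^2$; and, as in the paper's own use of the result, the statement for ``$f\in\H(\D)$'' is to be read for $f\in X$ (with the implicit convention that polynomials belong to $X$), which your proof covers.
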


Theorem~\ref{th:cesaro} is a key tool in the proof of the next technical result.

\begin{proposition}\label{prop: estimacion norma poli}
	Let $\om\in\DD$, $N\in \N$, $\a>0$ and $\mu = \om \omg^{\a-1}$. Let $K=K(\om)>1$ such that $\{M_n\}_{n=0}^\infty$ is a lacunary sequence, and let $\{P_n\}_{n=0}^\infty$ be the sequence of polynomials associated to $N$ and $\{M_n\}_{n=0}^\infty$ via  Theorem~\ref{th:Xpolynomials}. Then 
	\begin{equation}
		\begin{split}\label{eq:Io}
			\nm{I^\mu (P_n\ast g)}_{X}\asymp \mu_{M_n} \nm{P_n*g}_{X}\asymp  K^{-\a n} \nm{P_n*g}_{X},\quad n\in\N\cup\{0\},\quad g\in\H(\D),
		\end{split}
	\end{equation}
	for all admissible Banach space $X$.
\end{proposition}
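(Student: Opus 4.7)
The plan is to first dispatch the scalar equivalence, then realize the multiplier $I^\mu$ restricted to the Fourier support of $P_n\ast g$ as a superposition of Cesàro means, whose uniform boundedness on $X$ (Theorem~\ref{th:cesaro}) will drive both inequalities. The scalar equivalence $\mu_{M_n}\asymp K^{-\alpha n}$ is immediate: Lemma~\ref{lema: peso alfa} gives $\mu\in\DD$ with $\mu_x\asymp\om_x^\alpha$ for $x\ge 1$, while Lemma~\ref{lemma: caract dgorro}(iii), together with $M_n\asymp 1/(1-r_n)$ and $\omg(r_n)=\omg(0)K^{-n}$, yields $\om_{M_n}\asymp K^{-n}$.

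For the norm equivalence, write $h=P_n\ast g$, $A=M_{n-1}$, $B=M_{n+N}-1$. By \eqref{eq:P2} the Fourier support of $h$ lies in $[A,B]$, and iterating Lemma~\ref{lemma: caract dgorro}(iv) along $\{M_n\}$ gives $\mu_{2k+1}\asymp\mu_{M_n}$ uniformly on $[A,B]$. For the upper bound I would introduce $\lambda_k=\mu_{2k+1}/\mu_{M_n}$, so that $T_\lambda h=\mu_{M_n}^{-1}I^\mu h$, and perform a double Abel summation using the identity $S_m h=(m+1)\sigma_m h-m\sigma_{m-1}h$. The vanishing $\sigma_{A-1}h=0$, forced by the Fourier support, kills one boundary term and leaves
\begin{equation*}
T_\lambda h=\lambda_B h+(\lambda_{B-1}-\lambda_B)B\,\sigma_{B-1}h+\sum_{k=A}^{B-2}(k+1)(\Delta^2\lambda)_k\,\sigma_k h.
\end{equation*}
Theorem~\ref{th:cesaro} bounds each $\|\sigma_j h\|_X$ by $\|h\|_X$, reducing the upper bound to the scalar estimate $B|\lambda_{B-1}-\lambda_B|+\sum_{k=A}^{B-2}(k+1)|(\Delta^2\lambda)_k|\lesssim 1$. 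Using the integral representations $\mu_{2k+1}-\mu_{2k+3}=\int_0^1 r^{2k+1}(1-r^2)\mu(r)\,dr$ and $(\Delta^2\mu)_k=\int_0^1 r^{2k+1}(1-r^2)^2\mu(r)\,dr$, and summing the explicit kernel $\sum_{k\ge A}(k+1)r^{2k+1}$, the tail sum collapses to $A\bigl(\mu_{\{1\}}\bigr)_{2A+1}+\mu_{2A+1}\lesssim\mu_{M_n}$ by Lemma~\ref{lemma: caract dgorro}(vi) with $\beta=1$; the boundary term at $B$ is treated identically.

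For the lower bound I would apply the same mechanism to the reciprocal sequence $\phi_k=1/\mu_{2k+1}$. Since $I^\mu h$ inherits the Fourier support $[A,B]$ of $h$, one has $T_\phi(I^\mu h)=h$, so it suffices to prove $\sum_{k=A}^{B-2}(k+1)|(\Delta^2\phi)_k|\lesssim 1/\mu_{M_n}$. Writing $\Delta\mu_k=\mu_{2k+1}-\mu_{2k+3}$, this rests on the algebraic identity
\begin{equation*}
\mu_{2k+1}\mu_{2k+3}\mu_{2k+5}\,(\Delta^2\phi)_k=-\mu_{2k+5}\,(\Delta^2\mu)_k+\Delta\mu_{k+1}\bigl(\Delta\mu_k+\Delta\mu_{k+1}\bigr),
\end{equation*}
combined with $\mu_{2k+1}\asymp\mu_{M_n}$ on the band. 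The main obstacle is the quadratic remainder $\Delta\mu_{k+1}(\Delta\mu_k+\Delta\mu_{k+1})/\mu_{M_n}^3$: its summability at the correct order $O(1/\mu_{M_n})$ hinges on the differentiation bound $\Delta\mu_k\lesssim\mu_{M_n}/k$ from Lemma~\ref{lemma: caract dgorro}(vi), together with the lacunary control $A\asymp B\asymp M_n$, which makes $\sum_{k=A}^{B-2}(k+1)/M_n^2$ bounded. Apart from this delicate cancellation, everything else is a routine combination of the $\DD$-properties of Section~2 with Theorem~\ref{th:cesaro}.
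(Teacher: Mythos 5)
Your overall architecture --- double Abel summation against Ces\`aro means, Theorem~\ref{th:cesaro}, the integral representation of the second differences of the moments, and the reciprocal--moment identity for the lower bound --- is the same as the paper's. There is, however, a genuine gap in your treatment of the quadratic remainder in the lower bound: you invoke ``the lacunary control $A\asymp B\asymp M_n$'', i.e.\ $M_{n+N}\asymp M_{n-1}$ uniformly in $n$. This is false for a general $\om\in\DD$: lacunarity of $\{M_n\}$ only gives the lower bound $M_{n+1}\ge cM_n$ with $c>1$, while the upper ratio may be unbounded. For instance, if $\om(r)=(1-r)^{-1}\bigl(1-\log(1-r)\bigr)^{-2}$, then $\omg(r)=\bigl(1-\log(1-r)\bigr)^{-1}$, so $\om\in\DD$, $1-r_n=e^{1-K^n}$, $M_n\asymp e^{K^n-1}$ and $M_{n+1}/M_n\to\infty$. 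Writing $\Delta\mu_k=\mu_{2k+1}-\mu_{2k+3}=(\mu_{\{1\}})_{2k+1}$, your bound $\Delta\mu_k\lesssim\mu_{M_n}/k$ applied to \emph{both} factors gives
\[
\frac{1}{\mu_{M_n}^3}\sum_{k=A}^{B-2}(k+1)\,\Delta\mu_{k+1}\bigl(\Delta\mu_k+\Delta\mu_{k+1}\bigr)\lesssim\frac{1}{\mu_{M_n}}\sum_{k=A}^{B}\frac{1}{k}\asymp\frac{\log(B/A)}{\mu_{M_n}},
\]
which is not $O(1/\mu_{M_n})$ in the example above, so the step fails as written. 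The repair is to be unsymmetric, as in the paper: use Lemma~\ref{lemma: caract dgorro}(vi) on one factor only, $(k+1)\Delta\mu_k\lesssim\mu_{2k+1}\asymp\mu_{M_n}$, and sum the other factor telescopically, $\sum_{k=A}^{B-2}\Delta\mu_{k+1}\le\mu_{2A+3}\lesssim\mu_{M_n}$; this yields the required $O(\mu_{M_n}^{-1})$ with no comparison of $A$ and $B$.

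The same misconception appears, harmlessly, when you justify $\mu_{2k+1}\asymp\mu_{M_n}$ on the band ``by iterating Lemma~\ref{lemma: caract dgorro}(iv) along $\{M_n\}$'': the number of doublings between $M_{n-1}$ and $M_{n+N}$ is not uniformly bounded, so (iv) alone does not give uniform constants. The comparability is nevertheless true and follows as in your first paragraph: the moments are non-increasing, and at the endpoints $\mu_{M_{n-1}}\asymp\widehat{\mu}(r_{n-1})\asymp K^{-\a(n-1)}$ and $\mu_{2M_{n+N}+1}\asymp\widehat{\mu}(r_{n+N})\asymp K^{-\a(n+N)}$ by Lemma~\ref{lema: peso alfa}, Lemma~\ref{lemma: caract dgorro}(iii)--(iv) and the definition of $r_n$; since $N$ is fixed, these differ from $K^{-\a n}$ only by fixed powers of $K^{\a}$. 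Apart from these two points, your outline (scalar equivalence, upper bound, boundary terms) coincides with the paper's proof.
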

\begin{proof}
	We follow the argument of  the proof of \cite[Theorem 26]{advances} where it is proved the case $\alpha=1$. Theorem~\ref{th:Xpolynomials} yields
	\begin{equation*}
		I^\mu(P_n\ast g)(z)= \sum_{j= M_{n-1}}^{M_{n+N}} \mu_{2j+1} \widehat{P_n}(j)\widehat{g}(j)z^j = \sum_{j= M_{n-1}}^{M_{n+N}} \mu_{2j+1}
		g^j_n(z),\quad z\in\D,\quad n\in\N\cup\{0\},
	\end{equation*}
	where $g(z)=\sum_{j=0}^\infty\widehat{g}(j)z^j$ and $g^j_n(z)=\widehat{P_n}(j)\widehat{g}(j)z^j$ for all $n\in\N\cup\{0\}$. Write $G^j_n=\sum_{m=0}^j g^m_n$ for short. Then $G^k_n\equiv0$ for all $k< M_{n-1}$ by Theorem~\ref{th:Xpolynomials}, and hence a summation by parts gives
	\begin{equation}
		\begin{split}\label{eq:Io1}
			I^\mu(P_n*g)
			&=\sum_{j=M_{n-1}}^{M_{n+N}-1}(\mu_{2j+1}-\mu_{2j+3})G_n^j+\mu_{2M_{n+N}+1}\left(P_n*g\right),\quad n\in\N\cup\{0\}.
		\end{split}
	\end{equation}
	Now $\sigma_k(P_n*g)\equiv0$ for $k< M_{n-1}$, and hence a direct calculation yields
	\begin{equation*}
		\begin{split}
			&\sum_{j=M_{n-1}}^{M_{n+N}-1}(\mu_{2j+1}-\mu_{2j+3})G_n^j\\
			&=\sum_{j=M_{n-1}}^{M_{n+N}-1}(\mu_{2j+1}-\mu_{2j+3})
			\left[(j+1)\sigma_j(P_n\ast g)-j\sigma_{j-1}(P_n\ast g)\right]\\
			&=\sum_{j=M_{n-1}}^{M_{n+N}-2}\left[(\mu_{2j+1}-\mu_{2j+3})-(\mu_{2j+3}-\mu_{2j+5})\right]
			(j+1)\sigma_j (P_n*g)\\
			&\quad+(\mu_{2M_{n+N}-1}-\mu_{2M_{n+N}+1})M_{n+N}\sigma_{M_{n+N}-1}(P_n*g),\quad n\in\N\cup\{0\},
		\end{split}
	\end{equation*}
	which together with \eqref{eq:Io1} gives
	\begin{equation}
		\begin{split}\label{eq:Io2}
			I^\mu(P_n\ast g)
			&=\sum_{j=M_{n-1}}^{M_{n+N}-2}\left[(\mu_{2j+1}-\mu_{2j+3})-(\mu_{2j+3}-\mu_{2j+5})\right](j+1)\sigma_j (P_n\ast g)\\
			&\quad+(\mu_{2M_{n+N}-1}-\mu_{2M_{n+N}+1})M_{n+N}\sigma_{M_{n+N}-1}(P_n*g)\\
			&\quad+\mu_{2M_{n-1}+1}(P_n*g),\quad n\in\N\cup\{0\}.
		\end{split}
	\end{equation}
	Next, the $X$-norm of the three terms appearing on the right hand side of \eqref{eq:Io2} will be estimated separately. Firstly,  Theorem~\ref{th:cesaro},  Lemma~\ref{lema: peso alfa}, Lemma~\ref{lemma: caract dgorro}~(iii) and the definition of  $M_n$ give that
	\begin{equation}
		\begin{split}\label{eq:Io3}
			&\left\|\sum_{j=M_{n-1}}^{M_{n+N}-2}\left[(\mu_{2j+1}-\mu_{2j+3})-(\mu_{2j+3}-\mu_{2j+5})\right]
			(j+1)\sigma_j (P_n*g)\right\|_X\\
			&\le\sum_{j=M_{n-1}}^{M_{n+N}-2}
			\left[(\mu_{2j+1}-\mu_{2j+3})-(\mu_{2j+3}-\mu_{2j+5})\right]
			(j+1)\left\|\sigma_j(P_n*g)\right\|_X\\
			&\lesssim \left\|P_n*g\right\|_X\sum_{j=M_{n-1}}^{M_{n+N}-2}\left[(\mu_{2j+1}-\mu_{2j+3})-(\mu_{2j+3}-\mu_{2j+5})\right] (j+1)\\
			&=\left\|P_n*g\right\|_X\sum_{j=M_{n-1}}^{M_{n+N}-2}(\mu_{\{2\}})_{2j+1}(j+1)\\
			&=\left\|P_n*g\right\|_X\int_{0}^1 \left(\sum_{j=M_{n-1}}^{M_{n+N}-2}(j+1)r^{2j+1}\right) (1-r^2)^2\mu(r)\,dr\\
			&\le\left\|P_n*g\right\|_X\int_{0}^1 r^{M_{n-1}+1} \left(\sum_{j=0}^\infty(j+1)r^j\right)  (1-r^2)^2\mu(r)\,dr\\
			&\lesssim \left\|P_n*g\right\|_X\int_{0}^1 r^{M_{n-1}+1} \mu(r)\,dr \\
			& \asymp  \left\|P_n*g\right\|_X \mu_{M_n}
			\asymp K^{-\a n}\left\|P_n*g\right\|_X,\quad n\in\N\cup\{0\}.
		\end{split}
	\end{equation}
	Secondly, Theorem~\ref{th:cesaro},  Lemma~\ref{lema: peso alfa}, Lemma~\ref{lemma: caract dgorro}~(iii) and (vi)  and the definition of  $M_n$ give that
	\begin{equation}
		\begin{split}\label{eq:Io4}
			&\left\|(\mu_{2M_{n+N}-1}-\mu_{2M_{n+N}+1})M_{n+N}\sigma_{M_{n+N}-1}(P_n*g)\right\|_X
			\lesssim(\mu_{[1]})_{M_{n+N}}M_{n+N}\left\|P_n*g\right\|_X\\
			&\quad\lesssim\mu_{M_{n+N}}\left\|P_n*g\right\|_X
			\asymp K^{-\a n} \left\|P_n*g\right\|_X,\quad n\in\N\cup\{0\}.
		\end{split}
	\end{equation}
	Therefore, by combining \eqref{eq:Io2}, \eqref{eq:Io3} and \eqref{eq:Io4}, the first inequality in \eqref{eq:Io} follows.
	
	An argument similar to that in \eqref{eq:Io2} gives
	\begin{equation}
		\begin{split}\label{eq:Io5}
			P_n*g
			&=\sum_{j=M_{n-1}}^{M_{n+N}-2}\left[(\mu_{2j+1}^{-1}-\mu_{2j+3}^{-1})-(\mu_{2j+3}^{-1}-\mu_{2j+5}^{-1})\right]
			(j+1)\sigma_j I^\mu(P_n\ast g)\\
			&\quad+(\mu_{2M_{n+N}-1}^{-1}-\mu_{2M_{n+N}+1}^{-1})M_{n+N}\sigma_{M_{n+N}-1}I^\mu(P_n*g)\\
			&\quad+\mu_{2M_{n-1}+1}^{-1} I^\mu(P_n*g),\quad n\in\N\cup\{0\}.
		\end{split}
	\end{equation}
	By Theorem~\ref{th:cesaro},  Lemma~\ref{lema: peso alfa}, Lemma~\ref{lemma: caract dgorro}~(iii)-(v)-(vi)  and the definition of  $M_n$ 
	\begin{equation*}
		\begin{split}
			&\left\|\sum_{j=M_{n-1}}^{M_{n+N}-2}\left[(\mu_{2j+1}^{-1}-\mu_{2j+3}^{-1})-(\mu_{2j+3}^{-1}-\mu_{2j+5}^{-1})\right]
			(j+1)\sigma_j\left(I^\mu(P_n*g)\right)\right\|_X\\
			&\le\left\|I^\mu(P_n\ast g)\right\|_X \sum_{j=M_{n-1}}^{M_{n+N}-2}\left|(\mu_{2j+1}^{-1}-\mu_{2j+3}^{-1})-(\mu_{2j+3}^{-1}-\mu_{2j+5}^{-1})\right|(j+1)\\
			&=\left\|I^\mu(P_n\ast g)\right\|_X \sum_{j=M_{n-1}}^{M_{n+N}-2}\left|\frac{\left(\mu_{\{2\}}\right)_{2j+1}\mu_{2j+5}-\left(\mu_{\{1\}}\right)_{2j+3}\left(\left(\mu_{\{1\}}\right)_{2j+1}+\left(\mu_{\{1\}}\right)_{2j+3}\right)}
			{\mu_{2j+1}\mu_{2j+3}\mu_{2j+5}}\right|(j+1)\\
			&\le \left\|  I^\mu(P_n\ast g)\right\|_X \sum_{j=M_{n-1}}^{M_{n+N}-2}(j+1)
			\left(\frac{(\mu_{\{2\}})_{2j+1}}{\mu_{2j+1}\mu_{2j+3}}+ 2\frac{(\mu_{\{1\}})^2_{2j+1}}{\mu_{2j+1}\mu_{2j+3}\mu_{2j+5}} \right)
			\\
			&\lesssim\left\|  I^\mu(P_n\ast g)\right\|_X \left( \mu_{M_n}\right)^{-2}\sum_{j=M_{n-1}}^{M_{n+N}-2}(j+1)(\mu_{\{2\}})_{2j+1}
			+\left\|  I^\mu(P_n\ast g)\right\|_X \sum_{j=M_{n-1}}^{M_{n+N}-2}\frac{(\mu_{\{1\}})_{2j+1}}{\mu_{2j+3}\mu_{2j+5}}
			\\
			&\lesssim \left\| I^\mu(P_n\ast g)\right\|_X \left(  \left( \mu_{M_n}\right)^{-1}+  \left( \mu_{M_n}\right)^{-2} \sum_{j=M_{n-1}}^{M_{n+N}-2} (\mu_{\{1\}})_{2j+3} \right)
			\\ &			\lesssim \left( \mu_{M_n}\right)^{-1} \left\|I^\mu(P_n*g)\right\|_X \asymp  K^{-\a n}   \left\|I^\mu(P_n*g)\right\|_X ,\quad n\in\N\cup\{0\}.
		\end{split}
	\end{equation*}
	On the other hand, Theorem~\ref{th:cesaro},  Lemma~\ref{lema: peso alfa}, Lemma~\ref{lemma: caract dgorro}(vi)  and the definition of  $M_n$ give that
	\begin{equation*}
		\begin{split}
			&\left\|(\mu_{2M_{n+N}-1}^{-1}-\mu_{2M_{n+N}+1}^{-1})M_{n+N}\sigma_{M_{n+N}-1}I^\mu(P_n*g)\right\|_X\\
			&\lesssim\left\|I^\mu(P_n*g)\right\|_X  \frac{(\mu_{\{1\}})_{2M_{n+N}-1}}{\mu_{2M_{n+N}-1}\mu_{2M_{n+N}+1}}M_{n+N}\\
			&\lesssim\left\|I^\mu(P_n*g)\right\|_X \frac{1}{\mu_{2M_{n+N}+1}}
			\asymp K^{\a n}\left\|  I^\mu(P_n*g)\right\|_X,\quad n\in\N\cup\{0\},
		\end{split}
	\end{equation*}
	which together with \eqref{eq:Io5} gives the right hand side of \eqref{eq:Io}.
\end{proof}

Now we are ready to prove  Theorem~\ref{th: dualidad general} when $p=1$.
\begin{proposition}\label{prop: dualidad 1<q y p=1}
	Let $\om \in \DD$. Then,
	\begin{enumerate}
		\item[\rm (i)] If  $1<q<\infty$, then $(\Apq{1}{q})^\ast\simeq BMOA(q',\om)$ under the $A^2_\om$-pairing with equivalence of norms;
		\item[\rm (ii)] If  $0<q\leq 1$ and $\mu_q = \om \omg^{\frac{1}{q}}$, then $(\Apq{1}{q})^\ast \simeq BMOA(\infty,\om)$ under the $A^2_{\mu_q}$-pairing with equivalence of norms.
		
	\end{enumerate}

\end{proposition}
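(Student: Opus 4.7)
The plan is to mimic the blueprint of the proof of Proposition~\ref{prop: dualidad 1<p}, replacing the Hardy space identification $(H^p)^\ast\simeq H^{p'}$ by Fefferman's duality, which in terms of coefficient multipliers reads $[H^1,H^\infty]\simeq BMOA$ under the $H^2$-pairing. I would fix $K=K(\om)>1$ so that $\{M_n\}_{n=0}^\infty$ is lacunary (Lemma~\ref{le:Mnstar}), take $N=1$ so that Theorems~\ref{th:Xpolynomials}, \ref{th: descomposicion pavlovic} and~\ref{th:pavlovic-bloques} apply with $X=H^1=H^p$ for $p>\tfrac{1}{N+1}=\tfrac12$, and let $\{P_n\}_{n=0}^\infty$ be the associated polynomials. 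Setting $s=\log_2 K$, Theorem~\ref{th: descomposicion pavlovic} gives $\Apq{1}{q}=\ell^q_{s/q}(H^1,\{P_n\})$, and since polynomials are dense in $\Apq{1}{q}$, Proposition~\ref{pr:dual-admisible} combined with Theorem~\ref{th:pavlovic-bloques} and Fefferman's duality produces
$$
(\Apq{1}{q})^\ast\simeq\ell^{q'}_{-s/q}\bigl([H^1,H^\infty],\{P_n\}\bigr)\simeq\ell^{q'}_{-s/q}(BMOA,\{P_n\})
$$
under the $H^2$-pairing.

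The next step is to reinterpret the space on the right as $BMOA(q',\om)$ in case (i) or $BMOA(\infty,\om)$ in case (ii), and to upgrade the $H^2$-pairing accordingly. Since $BMOA$ is an admissible Banach space, Proposition~\ref{prop: estimacion norma poli} applies with $X=BMOA$. In case (i), $1<q<\infty$, I would take $\mu=\om$ (so $\alpha=1$), giving $\|I^\om P_n\ast g\|_{BMOA}\asymp K^{-n}\|P_n\ast g\|_{BMOA}$. Because $I^\om$ commutes with convolution by $P_n$, Theorem~\ref{th: descomposicion pavlovic} applied to $BMOA(q',\om)$ yields
$$
\|g\|_{BMOA(q',\om)}^{q'}\asymp\sum_{n=0}^\infty K^{-n}\|P_n\ast g\|_{BMOA}^{q'}\asymp\sum_{n=0}^\infty K^{nq'/q}\|P_n\ast I^\om g\|_{BMOA}^{q'}=\|I^\om g\|_{\ell^{q'}_{-s/q}(BMOA,\{P_n\})}^{q'},
$$
so $I^\om\colon BMOA(q',\om)\to\ell^{q'}_{-s/q}(BMOA,\{P_n\})$ is an isomorphism onto. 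In case (ii), $0<q\le 1$, the same scheme applied with $\mu=\mu_q=\om\omg^{1/q}$ (so $\alpha=1+\tfrac{1}{q}$) and the analogous supremum computation shows that $I^{\mu_q}\colon BMOA(\infty,\om)\to\ell^\infty_{-s/q}(BMOA,\{P_n\})$ is an isomorphism onto.

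Finally, the key observation that converts the pairings is that for polynomials $f,g$ and any radial weight $\mu$ one has $\langle f,g\rangle_{H^2}=\langle f,(I^\mu)^{-1}g\rangle_{A^2_\mu}$, so substituting $g\mapsto I^\mu g$ turns an identification under the $H^2$-pairing into one under the $A^2_\mu$-pairing; applying this with $\mu=\om$ in case (i) and $\mu=\mu_q$ in case (ii) delivers both statements. The main obstacle is already packaged into Proposition~\ref{prop: estimacion norma poli}: the delicate summation-by-parts argument combined with the uniform boundedness of Ces\`aro means on $BMOA$ (Theorem~\ref{th:cesaro}) and the properties of $\DD$-weights (Lemmas~\ref{lemma: caract dgorro} and~\ref{lema: peso alfa}) is what yields the sharp equivalence $\|I^\mu P_n\ast g\|_{BMOA}\asymp\mu_{M_n}\|P_n\ast g\|_{BMOA}$; once this is in hand, the rest of the argument reduces to bookkeeping with the decomposition norm and the pairing substitution.
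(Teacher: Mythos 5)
Your proposal is correct and follows essentially the same route as the paper's proof: the decomposition $\Apq{1}{q}=\ell^q_{s/q}(H^1,\{P_n\})$, Proposition~\ref{pr:dual-admisible} together with Theorem~\ref{th:pavlovic-bloques} and Fefferman's duality $[H^1,H^\infty]=BMOA$, then Proposition~\ref{prop: estimacion norma poli} with $\alpha=1$ (resp.\ $\alpha=1+\frac1q$) to show $I^\om$ (resp.\ $I^{\mu_q}$) maps $BMOA(q',\om)$ (resp.\ $BMOA(\infty,\om)$) isomorphically onto $\ell^{q'}_{-s/q}(BMOA,\{P_n\})$, and finally the substitution $g\mapsto I^\mu g$ to convert the $H^2$-pairing into the $A^2_\om$- or $A^2_{\mu_q}$-pairing. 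No gaps worth noting.
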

\begin{proof}
	Take $K(\om)>1$ such that $\{M_n\}_{n=0}^\infty$ is a lacunary sequence, and let $\{P_n\}_{n=0}^\infty$ be the sequence of polynomials associated to $N=1$ and $\{M_n\}_{n=0}^\infty$ via Theorem~\ref{th: descomposicion pavlovic}. 
	Observe that 
	$\Apq{1}{q} = \ell_{\frac{s}{q}}^q(H^1,\{P_n\}) $,  where $s=\log_2(K)$,  
	with equivalent norms. Now by Proposition~\ref{pr:dual-admisible}, Theorem~\ref{th:pavlovic-bloques} and the fact that $(H^1)^\ast\simeq [H^1,H^\infty] = BMOA$ via $H^2$-pairing \cite[Theorem~7.1 ]{girela} we obtain that
	\begin{equation}\label{eq:c21}
		(\Apq{1}{q})^\ast \simeq [\Apq{1}{q}, H^\infty] = [\ell^q_{\frac{s}{q}}(H^1,\{P_n\}), H^\infty] = \ell^{q'}_{-\frac{s}{q}}([H^1,H^\infty],\{P_n\}) = \ell^{q'}_{-\frac{s}{q}}(BMOA,\{P_n\})
	\end{equation}
	via the $H^2$-pairing with equivalence of norms. Now by Proposition~\ref{prop: estimacion norma poli} (with $\alpha=1$)
	$$
	\nm{I^\om P_n\ast g}_{BMOA} \asymp K^{-n}\nm{P_n \ast g}_{BMOA}, \quad n\in \N\cup \{0\},g\in \H(\D).
	$$
	
	Therefore, if $1<q<\infty$, by   Theorem~\ref{th: descomposicion pavlovic} and the previous estimate
	\begin{equation*}
		\begin{split}
			\nm{g}_{BMOA(q',\om)} &= \nm{g}_{\ell^{q'}_{\frac{s}{q'}}(BMOA,\{P_n\})} \\ &  \asymp  \sum_{n=0}^\infty K^{-n} \nm{P_n\ast g}_{BMOA}^{q'} \asymp \sum_{n=0}^\infty K^{-n+nq'} \nm{I^\om P_n\ast g}_{BMOA}^{q'} \\
			&= \sum_{n=0}^\infty K^{\frac{nq'}{q}}\nm{ P_n\ast I^\om g}^{q'}_{BMOA} 
			= \nm{I^\om g}_{\ell^{q'}_{\frac{-s}{q}}(BMOA,\{P_n\})}, \quad g\in \H(\D).
		\end{split}
	\end{equation*}
	Hence $I^\om:BMOA(q',\om) \to \ell^{q'}_{-\frac{s}{q}}(BMOA,\{P_n\})$ is isomorphic and onto. This together with \eqref{eq:c21} says that for any $q\in (1,\infty)$, $(\Apq{1}{q})^\ast$ is equivalent to $BMOA(q',\om)$ under the $A^2_\om$-pairing, with equivalence of norms.
	This finishes the proof of part (i).
	
	\par Assume that $0<q\le 1$. 
	By Proposition~\ref{prop: estimacion norma poli},
	$$
	\nm{I^{\mu_q} P_n\ast g}_{BMOA} \asymp K^{-(1+\frac{1}{q})n}\nm{P_n \ast g}_{BMOA}, \quad n\in \N\cup \{0\},\quad  g\in \H(\D).
	$$
	Then,  by   Theorem~\ref{th: descomposicion pavlovic} and the previous estimate
	\begin{equation*}
		\begin{split}
			\nm{g}_{BMOA(\infty,\om)} & \asymp  \nm{g}_{\ell^{\infty}_{s}(BMOA,\{P_n\})}
			\\
			&\asymp  \sup_{n\in \N\cup\{0\}} K^{-n} \nm{P_n\ast g}_{BMOA} \asymp \sup_{n\in \N\cup\{0\}} K^{-n+n(1+\frac{1}{q})} \nm{I^{\mu_q}( P_n\ast g)}_{BMOA}\\
			&=  \sup_{n\in \N\cup\{0\}} K^{\frac{n}{q}}\nm{P_n\ast  I^{\mu_q} g}_{BMOA} 
			= \nm{I^{\mu_q} g}_{\ell^{\infty}_{\frac{-s}{q}}(BMOA,\{P_n\})},\quad g\in \H(\D),
		\end{split}
	\end{equation*}
	and hence $I^{\mu_q}:BMOA(\infty,\om)\to \ell^{\infty}_{-\frac{s}{q}}(BMOA, \{P_n\})$ is isomorphic and onto. This together with \eqref{eq:c21} says that for any $0<q\leq 1$,  $(\Apq{1}{q})^\ast$ is equivalent to $BMOA(\infty,\om)$ under the $A^2_{\mu_q}$-pairing
	with equivalence of norms. This finishes the proof.
\end{proof}

\subsection{Proof of Theorem~\ref{th: dualidad general}. Cases (iii.a) and (iii.b).}
In order to deal with the range $0<p<1$ in the proof of  Theorem~\ref{th: dualidad general}, we will  use the following characterization of $(H^p)^\ast$.
For $0<p<1$ let us consider the standard weight  $\nu_{\frac{1}{p}-2}(z) =\left( \frac{1}{p}-1\right) (1-\abs{z}^2)^{\frac{1}{p}-2}$ and  the space

$$
I^{\nu_{\frac{1}{p}-2}}(\B)=\left\{ h\in \H(\D): h=I^{\nu_{\frac{1}{p}-2}}(g), \, g\in \B \right\},
$$
equipped with the norm $\nm{h}_{I^{\nu_{\frac{1}{p}-2}}(\B)} = \nm{g}_\B$.

We will  use the following characterization of $(H^p)^\ast$, $0<p<1$.

\begin{lettertheorem}\label{th: dual zhu}
	Let $0<p<1$ and $\nu_{\frac{1}{p}-2}(z) =\left( \frac{1}{p}-1\right) (1-\abs{z}^2)^{\frac{1}{p}-2}$.  Then $(H^p)^\ast \simeq I^{\nu_{\frac{1}{p}-2}}(\B) $ under the $H^2$-pairing with equivalence of norms .
\end{lettertheorem}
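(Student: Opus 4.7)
The plan is to reduce the statement to the classical description of $(H^p)^*$ for $0<p<1$ recorded in \cite{Zhu2}, and then to recast that classical description in the operator form $I^{\nu_{1/p-2}}(\B)$ by means of the decomposition norm machinery already developed in the paper. Concretely, I would first apply Proposition~\ref{pr:dual-admisible} to $X = H^p$ (an admissible quasi-normed space with dense polynomials, even for $0<p<1$) to obtain
$$
(H^p)^* \simeq [H^p, H^\infty]
$$
under the $H^2$-pairing, and then invoke the classical theorem from \cite{Zhu2}, which identifies $[H^p, H^\infty]$ (equivalently, $(H^p)^*$ under the $H^2$-pairing) with the analytic Lipschitz space $\Lambda_\beta$, where $\beta = \frac{1}{p} - 1 > 0$.

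The remaining task is then to show that $I^{\nu_{1/p-2}} : \B \to \Lambda_\beta$ is a topological isomorphism, so that $I^{\nu_{1/p-2}}(\B) = \Lambda_\beta$ with equivalent norms. A direct beta-function computation gives $(\nu_{1/p-2})_{2n+1} \asymp (n+1)^{-\beta}$, so $I^{\nu_{1/p-2}}$ is coefficient-by-coefficient comparable to the classical fractional integration of order $\beta$, which is known to realize precisely this isomorphism. To upgrade this coefficient comparison to a norm equivalence within the paper's framework, I would apply Proposition~\ref{prop: estimacion norma poli} with the admissible Banach space $X = \B$, the weight $\omega = \nu_0 \equiv 1 \in \DD$ (so that $\widehat{\omega}(r) = 1 - r$), and $\alpha = \beta$; the resulting $\mu = (1-r)^{\beta - 1}$ is comparable, up to a constant, to $\nu_{1/p-2}$, and the Proposition yields block estimates
$$
\|I^{\nu_{1/p-2}}(P_n \ast g)\|_{\B} \asymp K^{-\beta n} \|P_n \ast g\|_{\B}, \quad n \in \N \cup \{0\}, \quad g \in \H(\D),
$$
for the polynomials $\{P_n\}$ produced by Theorem~\ref{th: descomposicion pavlovic}. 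Summing these block estimates via Theorem~\ref{th: descomposicion pavlovic} then gives the global norm equivalence $\|I^{\nu_{1/p-2}} g\|_{\Lambda_\beta} \asymp \|g\|_{\B}$. Writing $h = I^{\nu_{1/p-2}} g$, the $H^2$-pairing becomes
$$
\langle f, h \rangle_{H^2} = \lim_{r \to 1^-} \sum_{n = 0}^\infty \widehat{f}(n) \overline{\widehat{g}(n)} (\nu_{1/p-2})_{2n+1} r^n,
$$
and the defining norm $\|h\|_{I^{\nu_{1/p-2}}(\B)} = \|g\|_{\B}$ combines with the norm equivalence to yield the statement.

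The main obstacle is the identification $I^{\nu_{1/p-2}}(\B) = \Lambda_\beta$: the multiplier sequences $\{(\nu_{1/p-2})_{2n+1}\}_n$ and $\{(n+1)^{-\beta}\}_n$ agree only up to constants, so a multiplier-comparison argument is needed to conclude that the two operators produce the same image space with equivalent norms. The decomposition norm route outlined above handles this obstacle cleanly, reusing the same technique already deployed in the proofs of Propositions~\ref{prop: dualidad 1<p} and~\ref{prop: dualidad 1<q y p=1}; more elementary estimates via summation by parts would also work but are less transparent and do not mesh as naturally with the rest of the paper.
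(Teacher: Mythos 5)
Your overall strategy---identify $(H^p)^\ast$ with the Lipschitz class $\Lambda_\beta$, $\beta=\frac{1}{p}-1$, and then show that $I^{\nu_{\frac{1}{p}-2}}$ maps $\B$ isomorphically onto $\Lambda_\beta$---is classically viable, but the step on which everything hinges is not delivered by the tools you invoke, and that step is the crux. Theorem~\ref{th: descomposicion pavlovic} provides decomposition norms only for the spaces $X(q,\om)$, that is, spaces defined through $\int_0^1\|f_r\|_X^q\om(r)\,dr$ or $\sup_r\|f_r\|_X\omg(r)$; neither $\B$ nor $\Lambda_\beta$ is of this form, and the paper contains no block characterization of either space. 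Hence ``summing the block estimates via Theorem~\ref{th: descomposicion pavlovic}'' does not yield $\|I^{\nu_{\frac{1}{p}-2}}g\|_{\Lambda_\beta}\asymp\|g\|_{\B}$; to run your argument you would need the classical dyadic estimates $\|h\|_{\Lambda_\beta}\asymp\sup_n 2^{n\beta}\|\Delta_n h\|_{H^\infty}$ and $\|g\|_{\B}\asymp|g(0)|+\sup_n\|\Delta_n g\|_{H^\infty}$, together with the comparability of the $\B$- and $H^\infty$-norms on polynomials with dyadic spectrum, none of which you state or cite. In addition, Proposition~\ref{prop: estimacion norma poli} applied with $\om\equiv1$ and $\alpha=\beta$ controls $I^\mu$ for $\mu(r)=(1-r)^{\beta-1}$, not $I^{\nu_{\frac{1}{p}-2}}$: the two moment sequences are only termwise comparable, and termwise comparability of Taylor coefficients does not transfer norms of blocks---this is precisely the ``multiplier-comparison argument'' you yourself identify as the main obstacle and then leave unproved. (That particular defect is repairable: apply Proposition~\ref{prop: estimacion norma poli} with $\om=\mu=\nu_{\frac{1}{p}-2}\in\DD$ and $\alpha=1$, so that the operator is exactly $I^{\nu_{\frac{1}{p}-2}}$; but the missing block characterizations of $\B$ and $\Lambda_\beta$ remain a genuine gap.)

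The paper's own proof bypasses $\Lambda_\beta$ entirely and is essentially one line: Theorem~B of \cite{zhu} states that for $0<p<1$ one has $(H^p)^\ast\simeq\B$ with equivalence of norms under the pairing $\langle f,g\rangle_{p}=\lim_{r\to1^-}\sum_{n=0}^\infty\widehat f(n)\overline{\widehat g(n)}\bigl(\nu_{\frac{1}{p}-2}\bigr)_{2n+1}r^n$, and this pairing is by definition the $H^2$-pairing of $f$ against $h=I^{\nu_{\frac{1}{p}-2}}g$, while $\|h\|_{I^{\nu_{\frac{1}{p}-2}}(\B)}=\|g\|_{\B}$ by the definition of the norm on $I^{\nu_{\frac{1}{p}-2}}(\B)$. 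So the statement is an immediate reformulation of the cited result, with no need for Proposition~\ref{pr:dual-admisible}, the block machinery, or the identification of $I^{\nu_{\frac{1}{p}-2}}(\B)$ with a Lipschitz space. If you prefer your route, you should instead cite the Duren--Romberg--Shields duality $(H^p)^\ast\simeq\Lambda_{\frac{1}{p}-1}$ under the $H^2$-pairing together with a Hardy--Littlewood-type theorem asserting that fractional integration of order $\beta$ maps $\B$ onto $\Lambda_\beta$, or supply the block characterizations indicated above.
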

\begin{proof}
	By \cite[Theorem B]{zhu}, $(H^p)^\ast \simeq \B$ with equivalence of norms under the pairing
	$$
	\langle f , g\rangle_{p} = \lim_{r\to 1^-}\sum_{n=0}^\infty \widehat{f}(n)\overline{\widehat{g}(n)}\left (\nu_{\frac{1}{p}-2} \right )_{2n+1} r^n .$$
	Therefore, the result follows.
\end{proof}

\begin{lemma}\label{lema: bloch admisible}
	Let $0<p<1$, $0<q\leq \infty$ and  $\om\in \DD$. Then, 
	the following  statements hold:
	\begin{itemize}
		\item[(i)] $I^{\nu_{\frac{1}{p}-2}} :\B \to I^{\nu_{\frac{1}{p}-2}}(\B)$ is an isometric isomorphism;
		\item[(ii)] $I^{\nu_{\frac{1}{p}-2}}(\B)$ is an admissible Banach space;
		\item[(iii)] $I^{\nu_{\frac{1}{p}-2}} :\B(q,\om)\to I^{\nu_{\frac{1}{p}-2}} (q,\om)$ is an isometric isomorphism.
	\end{itemize}
\end{lemma}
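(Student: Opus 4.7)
The plan is to extract (i) and (iii) directly from the definitions, and to prove (ii) by transferring each admissibility axiom of $\B$ through the isometric identification supplied by (i).

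For (i), I note that $I^{\nu_{\frac{1}{p}-2}}$ is a linear operator on $\H(\D)$ and it is injective because the moments $\left(\nu_{\frac{1}{p}-2}\right)_{2n+1}$ are strictly positive; thus the assignment $\|h\|_{I^{\nu_{\frac{1}{p}-2}}(\B)}=\|g\|_\B$ is unambiguous and $I^{\nu_{\frac{1}{p}-2}}$ is tautologically a surjective isometry onto its image, whose completeness is inherited from $\B$. For (iii), a term-by-term inspection of Taylor coefficients gives the commutation $(I^{\nu_{\frac{1}{p}-2}}g)_r=I^{\nu_{\frac{1}{p}-2}}(g_r)$ for every $r\in[0,1)$; combined with (i), this yields $\|(I^{\nu_{\frac{1}{p}-2}}g)_r\|_{I^{\nu_{\frac{1}{p}-2}}(\B)}=\|g_r\|_\B$, and integrating (or taking the supremum) against $\om$ delivers the isometry $I^{\nu_{\frac{1}{p}-2}}:\B(q,\om)\to I^{\nu_{\frac{1}{p}-2}}(q,\om)$ for each $0<q\le\infty$.

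The main work lies in (ii). The dilation axiom is inherited from $\B$ via the analogous commutation $(I^{\nu_{\frac{1}{p}-2}}g)_\z=I^{\nu_{\frac{1}{p}-2}}(g_\z)$ together with (i). For the two axioms linking the norm topology of $I^{\nu_{\frac{1}{p}-2}}(\B)$ to uniform convergence on compact subsets, the key ingredient I would use is the moment asymptotic $\left(\nu_{\frac{1}{p}-2}\right)_{2k+1}\asymp k^{1-\frac{1}{p}}$ (a standard Beta-function computation, equivalently Lemma~\ref{lemma: caract dgorro}(iii) applied to the weight $\nu_{\frac{1}{p}-2}$), together with the classical Bloch coefficient bound $|\widehat{g}(k)|\lesssim\|g\|_\B$ for $k\ge 1$. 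For the forward direction (norm convergence in $I^{\nu_{\frac{1}{p}-2}}(\B)$ forcing uniform convergence on compacta), the product of these two estimates dominates $\sum_k|\widehat{(h_n-h)}(k)|r^k$, for each fixed $r<1$, by a convergent series times $\|g_n-g\|_\B$, which tends to $0$.

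For the reverse direction, I would pick $1<\rho<r_0$, apply Cauchy's estimates on $|z|=\rho$ to get $|\widehat{(h_n-h)}(k)|\le\rho^{-k}\|h_n-h\|_{L^\infty(|z|=\rho)}$, and then divide by the moment to obtain $|\widehat{(g_n-g)}(k)|\lesssim k^{\frac{1}{p}-1}\rho^{-k}\|h_n-h\|_{L^\infty(|z|=\rho)}$. Inserting this into $\sum_k k|\widehat{(g_n-g)}(k)||z|^{k-1}$ shows that $(1-|z|^2)|(g_n-g)'(z)|\lesssim\|h_n-h\|_{L^\infty(|z|=\rho)}$, uniformly in $z\in\D$, because $\rho>1$ makes the auxiliary series $\sum_k k^{1/p}\rho^{-k}$ finite; a similar (simpler) bound handles $|g_n(0)-g(0)|$. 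The hard part is precisely this step: inverting $I^{\nu_{\frac{1}{p}-2}}$ at the coefficient level amplifies each coefficient by $k^{1/p-1}$, and the geometric decay $\rho^{-k}$ obtained by enlarging the disc past the unit circle is what absorbs this polynomial growth; this is also why the second admissibility axiom requires $r_0>1$ and not merely $r_0=1$.
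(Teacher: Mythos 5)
Your proposal is correct, and for parts (i), (iii), the dilation axiom, and the first admissibility axiom it coincides with the paper's argument (the paper likewise reads off the isometry from the definition, uses the commutation $T_p(f_\zeta)=(T_pf)_\zeta$, and bounds $|f(z)|\le\sum_n|\widehat g(n)|\bigl(\nu_{\frac1p-2}\bigr)_{2n+1}R^n\lesssim\|g\|_\B$ via the Bloch coefficient estimate). Where you genuinely diverge is the second admissibility axiom. The paper writes $\|f\|_{X_p}=\|g\|_\B\lesssim\sup_{\overline\D}|g|$ and then recovers pointwise values of $g$ from $f$ through the $A^2$ reproducing kernels: $|g(a)|\le\int_\D|f_R(\z)|\,|T_p^{-1}(B_{a/R})(\z)|\,dA(\z)\le\sup_{\overline{D(0,R)}}|f|\cdot\|T_p^{-1}(B_{a/R})\|_{A^1}$, with $R=\frac{1+r}{2}$, citing an integral representation from \cite{MorPeldelaRosa23} and estimating $\|T_p^{-1}(B_{a/R})\|_{A^1}\le\|T_p^{-1}(B_{a/R})\|_{A^2}\asymp(1-\tfrac1R)^{-1/p}$ by a coefficient computation; here too it is the fact that $R>1$ that makes the bound finite. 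You instead invert $I^{\nu_{\frac1p-2}}$ coefficientwise: Cauchy's estimates on $|z|=\rho$ with $1<\rho<r$ give $|\widehat{h}(k)|\le\rho^{-k}\sup_{|z|=\rho}|h|$, and the moment asymptotics $\bigl(\nu_{\frac1p-2}\bigr)_{2k+1}\asymp k^{1-\frac1p}$ turn this into $\|g\|_\B\lesssim\sup_{|z|=\rho}|h|\sum_k k^{1/p}\rho^{-k}<\infty$. Both routes are sound; yours is more elementary and self-contained (no reproducing-kernel representation or external citation needed) and makes completely transparent that the polynomial amplification $k^{1/p-1}$ caused by inverting $I^{\nu_{\frac1p-2}}$ is absorbed precisely by the geometric decay available only because $r>1$, while the paper's kernel argument avoids coefficient-by-coefficient manipulation and stays within the $A^2$/Bergman-kernel framework used elsewhere in the paper.
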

\begin{proof}
	In order to simplify the notation, throughout the proof we write $X_p=I^{\nu_{\frac{1}{p}-2}}(\B)$ and $T_p=I^{\nu_{\frac{1}{p}-2}}$. Since (i) is clear, let us prove (ii).
	Firstly,  if $f\in X_p$ and $g=T_p^{-1}(f)\in \B$,  then $\sup_{n\in \N \cup \{0\}} \abs{\widehat{g}(n)} \lesssim \nm{g}_\B$. So if
	$0<R<1$,
	then
	$$
	\abs{f(z)} \leq \sum_{n=0}^{\infty}\abs{\widehat{g}(n)}\left (\nu_{\frac{1}{p}-2} \right)_{2n+1} \abs{z}^n \lesssim \nm{g}_\B \sum_{n=0}^{\infty}\left (\nu_{\frac{1}{p}-2} \right)_{2n+1} R^n = C(p,R) \nm{f}_{X_p},\quad |z|\le R,
	$$
	so the norm convergence in $X_p$  implies uniform convergence in compact subsets of $\D$. \\
	Now if $r>1$, 
	we will see that that 
	$$\nm{f}_{X_p} \lesssim \sup_{z\in\overline{D(0,R)}}\abs{f(z)}, \quad R=\frac{1+r}{2} \quad . $$ 
	
	If $f\in \H(D(0,r))$, then $g=T_p^{-1}(f)\in \H(D(0,r))$
	\begin{equation}\label{eq: norma xp 1}		
			\nm{f}_{X_p} = \nm{g}_\B \lesssim \sup_{z\in \overline{\D}} \abs{g(z)}.
	\end{equation}
	Now, let $R= \frac{1+r}{2}$ then by \cite[Proposition 6]{MorPeldelaRosa23} if $B_z$ are the reproducing kernels of $A^2$ and $a\in \overline{\D}$
	\begin{equation}\label{eq: norma xp 2}
	\abs{g(a)} =\abs{g_R \left(\frac{a}{R} \right)} \leq \int_{\D} \abs{f_R(\z)}\abs{T_p^{-1} \left(B_{\frac{a}{R}} \right)(\z)}dA(\z) \leq \sup_{z\in \overline{D(0,R)}}\abs{f(z)} \nm{T_p^{-1} \left(B_{\frac{a}{R}} \right)}_{A^1}.
	\end{equation}
	Now,  
	\begin{equation*}
		\begin{split}
			\nm{T_p^{-1} \left(B_{\frac{a}{R}} \right)}_{A^1} & \le \nm{T_p^{-1} \left(B_{\frac{a}{R}} \right)}_{A^2}
			\\ & \lesssim \left(  \sum_{n=0}^\infty \frac{(n+1)^2}{R^{2n} \left(\nu_{\frac{1}{p}-2}\right)_{2n+1}^2(n+1)} \right)^{1/2}
			\\ & \asymp \left(  \sum_{n=0}^\infty \frac{ (n+1)^{\frac{2}{p}-1} }{R^{2n} } \right)^{1/2}
			\\ & \asymp \frac{1}{(1-\frac{1}{R})^{\frac{1}{p}}} = C(p,r), 
		\end{split}
	\end{equation*}
	that combined with \eqref{eq: norma xp 1} and \eqref{eq: norma xp 2} gives,
	$$
	\nm{f}_{X_p} \lesssim \sup_{z\in \overline{D(0,R)}} \abs{f(z)},
	$$
	so the convergence in compact subsets of $D(0,r)$ implies the norm convergence in $X_p$. \\
	To conclude, if $\z \in \D$ by noting that $T_p(f_\z) = (T_p(f))_\z$ we obtain that if $f\in X_p$ and $g\in \B$ with $T_p(g)=f$, then 
	$$
	\nm{f_\z}_{X_p} = \nm{T_p(g_\z)}_{X_p} = \nm{g_\z}_\B \leq \nm{g}_\B = \nm{T_p(g)}_{X_p} = \nm{f}_{X_p}, 
	$$
	obtaining that $X_p$ is admissible. Moreover, since the classical Bloch space is Banach, then $X_p$ also is Banach. 
	Let us prove (iii). From (ii) and the properties of $T_p$ we obtain that for every $f\in \H(\D)$ and $0\leq r <1$, if $0<q<\infty$
	$$
	\nm{T_p f}_{X_p(q,\om)}^{q'} = \int_0^1 \nm{(T_p f)_r}_{X_p}^{q} \om(r)dr = \int_0^1 \nm{f_r}_{\B}^{q}\om(r)dr = \nm{f}_{\B(q,\om)}^{q},\quad f\in \H(\D),
	$$
	that is $T_p$ is an isometric isomorphism between  $X_p(q,\om)$ and $\B(q,\om)$, the case when $q=\infty$ is analogous to the previous case. This finishes the proof.
\end{proof}
Now we are ready we prove Theorem~\ref{th: dualidad general} (iii.a) and (iii.b).
\begin{proposition}\label{prop: dualidad p<1}
	Let $\om\in \DD$ and $0<p<1$.  Then,
	\begin{enumerate}
		\item[(i)] If $1<q<\infty$, then $(\Apq{p}{q})^\ast\simeq \B(q',\om)$ under the pairing
		$$
		\langle f , g \rangle_{\om,p} = \lim_{r\to 1^-} \sum_{n=0}^{\infty}\widehat{f}(n)\overline{\widehat{g}(n)} \om_{2n+1}\left (\nu_{\frac{1}{p}-2} \right )_{2n+1} r^n,\quad f,g\in \H(\D),
		$$
		with equivalence of norms; 
		\item[(ii)] If $0<q\leq 1$, then $(\Apq{p}{q})^\ast\simeq \B(\infty,\om)$ under the pairing
		$$
		\langle f , g \rangle_{\om,p,q} = \lim_{r\to 1^-} \sum_{n=0}^{\infty}\widehat{f}(n)\overline{\widehat{g}(n)} (\mu_{q})_{2n+1}\left (\nu_{\frac{1}{p}-2} \right )_{2n+1} r^n, \quad f,g\in \H(\D)
		$$
		with equivalence of norms, where $\mu_q = \om \omg^{\frac{1}{q}}$.
	\end{enumerate}
\end{proposition}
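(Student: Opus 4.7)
The plan is to follow the same overall strategy as in Propositions~\ref{prop: dualidad 1<p} and \ref{prop: dualidad 1<q y p=1}, with two new ingredients: Theorem~\ref{th: dual zhu} to identify $[H^p,H^\infty]$ when $0<p<1$, and Lemma~\ref{lema: bloch admisible} to transfer results between $\B$ and the space $X_p:=I^{\nu_{\frac{1}{p}-2}}(\B)$. Fix $N\in\N$ with $p>\frac{1}{N+1}$, pick $K=K(\om)>1$ so that $\{M_n\}_{n=0}^\infty$ is lacunary (Lemma~\ref{le:Mnstar}), let $\{P_n\}_{n=0}^\infty$ be the associated polynomials from Theorem~\ref{th: descomposicion pavlovic}, and set $s=\log_2 K$. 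Then \eqref{eq:s} gives $\Apq{p}{q}=\ell^q_{s/q}(H^p,\{P_n\})$, and combining Proposition~\ref{pr:dual-admisible}, Theorem~\ref{th:pavlovic-bloques} and Theorem~\ref{th: dual zhu} yields
$$
(\Apq{p}{q})^\ast\simeq\ell^{q'}_{-s/q}(X_p,\{P_n\})
$$
under the $H^2$-pairing with equivalence of norms.

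For case (i), Lemma~\ref{lema: bloch admisible}(ii) asserts that $X_p$ is an admissible Banach space, so Proposition~\ref{prop: estimacion norma poli} with $\alpha=1$ (and $\mu=\om$) gives $\|I^\om P_n\ast h\|_{X_p}\asymp K^{-n}\|P_n\ast h\|_{X_p}$ for every $h\in\H(\D)$. Since convolution commutes with $I^\om$, Theorem~\ref{th: descomposicion pavlovic} applied to $X_p$ with exponent $q'$ then yields
$$
\|I^\om h\|^{q'}_{\ell^{q'}_{-s/q}(X_p,\{P_n\})}\asymp\sum_{n=0}^\infty K^{-n}\|P_n\ast h\|^{q'}_{X_p}\asymp\|h\|^{q'}_{X_p(q',\om)},
$$
so $I^\om:X_p(q',\om)\to\ell^{q'}_{-s/q}(X_p,\{P_n\})$ is an isomorphism onto. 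Composing with the isometric isomorphism $I^{\nu_{\frac{1}{p}-2}}:\B(q',\om)\to X_p(q',\om)$ from Lemma~\ref{lema: bloch admisible}(iii) produces an isomorphism $I^\om\circ I^{\nu_{\frac{1}{p}-2}}:\B(q',\om)\to\ell^{q'}_{-s/q}(X_p,\{P_n\})$. A direct coefficient check shows $\langle f,I^\om I^{\nu_{\frac{1}{p}-2}}g\rangle_{H^2}=\langle f,g\rangle_{\om,p}$, which converts the $H^2$-pairing identification into the claimed identification under $\langle\cdot,\cdot\rangle_{\om,p}$.

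Case (ii) is handled analogously. Since $q'=\infty$, the displayed isomorphism above reads $(\Apq{p}{q})^\ast\simeq\ell^\infty_{-s/q}(X_p,\{P_n\})$ under the $H^2$-pairing. Applying Proposition~\ref{prop: estimacion norma poli} with $\alpha=1+\frac{1}{q}$ (so $\mu=\mu_q$) yields $\|I^{\mu_q}P_n\ast h\|_{X_p}\asymp K^{-(1+1/q)n}\|P_n\ast h\|_{X_p}$, from which one checks, as in the second part of the proof of Proposition~\ref{prop: dualidad 1<p}, that $I^{\mu_q}:X_p(\infty,\om)\to\ell^\infty_{-s/q}(X_p,\{P_n\})$ is an isomorphism onto. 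Composing with $I^{\nu_{\frac{1}{p}-2}}:\B(\infty,\om)\to X_p(\infty,\om)$ and using the coefficient identity $\langle f,I^{\mu_q}I^{\nu_{\frac{1}{p}-2}}g\rangle_{H^2}=\langle f,g\rangle_{\om,p,q}$ produces the desired identification under $\langle\cdot,\cdot\rangle_{\om,p,q}$.

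I expect the main technical obstacle to be invoking Proposition~\ref{prop: estimacion norma poli} on the somewhat unusual admissible Banach space $X_p$; this relies essentially on Lemma~\ref{lema: bloch admisible}(ii), in particular on the delicate implication that uniform convergence on compacta of $D(0,r)$ forces $X_p$-convergence. Once admissibility is secured, the remainder is bookkeeping that runs parallel to the proofs of Propositions~\ref{prop: dualidad 1<p} and \ref{prop: dualidad 1<q y p=1}.
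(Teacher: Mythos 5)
Your proposal is correct and follows essentially the same route as the paper: the identification $(\Apq{p}{q})^\ast\simeq\ell^{q'}_{-s/q}(X_p,\{P_n\})$ via Proposition~\ref{pr:dual-admisible}, Theorem~\ref{th:pavlovic-bloques} and Theorem~\ref{th: dual zhu}, then Proposition~\ref{prop: estimacion norma poli} (with $\alpha=1$, resp.\ $\alpha=1+\frac1q$) on the admissible space $X_p$, and finally the transfer to $\B(q',\om)$, resp.\ $\B(\infty,\om)$, through the isometry of Lemma~\ref{lema: bloch admisible}. The only cosmetic difference is that you compose $I^{\om}$ (resp.\ $I^{\mu_q}$) with $I^{\nu_{\frac1p-2}}$ and verify the pairing identity directly, whereas the paper first states the duality via the $A^2_\om$- (resp.\ $A^2_{\mu_q}$-) pairing with $X_p(q',\om)$ and then pulls back by $T_p^{-1}$; the computations are identical.
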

\begin{proof}
	In order to simplify the notation, throughout the proof we write $X_p=I^{\nu_{\frac{1}{p}-2}}(\B)$. 
	Let $N\in \N$ such that $p>\frac{1}{N+1}$ and $K(\om)>1$ such that $\{M_n\}_{n=0}^\infty$ is a lacunary sequence. Now,  let us consider $\{P_n\}_{n=0}^\infty$ be the sequence of polynomials associated to $N$ and $\{M_n \}_{n=0}^\infty$ via Theorem~\ref{th: descomposicion pavlovic}.
	If $s=\log_2(K)$,  
	by Theorem~\ref{th: descomposicion pavlovic} 
	\begin{equation*}\label{eq: prop norma Xpq}
		\Apq{p}{q} = \ell^{q}_\frac{s}{q}(H^p,\{P_n\})  
	\end{equation*}
	with equivalent norms. Therefore,  by Proposition~\ref{pr:dual-admisible}, Theorem~\ref{th:pavlovic-bloques} and Theorem~\ref{th: dual zhu}, we obtain that
	\begin{equation}\label{eq:c31}
		(\Apq{p}{q})^\ast \simeq [\Apq{p}{q},H^\infty] = [\ell^{q}_\frac{s}{q}(H^p,\{P_n\}), H^\infty] = \ell^{q'}_{-\frac{s}{q}}([H^p,H^\infty],\{P_n\}) = \ell^{q'}_{-\frac{s}{q}}(X_p,\{P_n\})
	\end{equation}
	via the $H^2$-pairing with equivalence of norms. Now, bearing in mind that  $X_p$ is admissible by Lemma~\ref{lema: bloch admisible}, and using Proposition~\ref{prop: estimacion norma poli} it follows that
	$$
	\nm{I^\om P_n\ast g}_{X_p} \asymp K^{-n}\nm{P_n \ast g}_{X_p}, \quad n\in \N\cup \{0\},g\in \H(\D).
	$$
	Then, if $1<q<\infty$,  by Theorem~\ref{th: descomposicion pavlovic} and the previous estimate
	\begin{equation*}
		\begin{split}
			\nm{g}_{X_p(q',\om)} &\asymp \nm{g}_{\ell^{q'}_{\frac{s}{q'}}(X_p,\{P_n\})}\asymp   \sum_{n=0}^\infty K^{-n+nq'} \nm{I^\om P_n\ast g}_{X_p}^{q'} \\
			&= \sum_{n=0}^\infty K^{n\frac{q'}{q}}\nm{  P_n\ast I^\om g}^{q'}_{X_p} = \nm{I^\om g}_{\ell^{q'}_{-\frac{s}{q}}(X_p,\{P_n\})}, \quad g\in \H(\D)
		\end{split}
	\end{equation*}
	and hence $I^\om:X_p(q',\om) \to \ell^{q'}_{-\frac{s}{q}}(X_p,\{P_n\})$ is isomorphic and onto. This together with \eqref{eq:c31} yields that   $(\Apq{p}{q})^\ast \simeq X_p(q',\om)$ via $A^2_\om$-pairing. So for every $L\in (\Apq{p}{q})^\ast$ there exists a $g\in X_p(q',\om)$ such that $L(f) = \langle f,g \rangle_{A^2_\om}$ for every $f\in \Apq{p}{q}$ and $\nm{L}_{(\Apq{p}{q})^\star} \asymp \nm{g}_{X_p}$, let now $h = T_p^{-1}(g)\in\B$, then by Lemma~\ref{lema: bloch admisible} (iv)
	$$
	L(f)=\langle f,g\rangle_{A_{\om}^2} = \langle f,T_p(h)\rangle_{A_{\om}^2} = \lim_{r\to 1^-} \sum_{n=0}^{\infty}\widehat{f}(n)\overline{\widehat{h}(n)} \om_{2n+1}\left (\nu_{\frac{1}{p}-2} \right )_{2n+1} r^n, \quad  f\in \Apq{p}{q}.
	$$ 
	Consequently,  $(\Apq{p}{q})^\ast \simeq \B(q',\om)$ under the pairing 
	$$
	\langle f , g \rangle_{\om,p} = \lim_{r\to 1^-} \sum_{n=0}^{\infty}\widehat{f}(n)\overline{\widehat{g}(n)} \om_{2n+1}\left (\nu_{\frac{1}{p}-2} \right )_{2n+1} r^n,\quad f,g\in \H(\D).
	$$
	with equivalence of norms, due to the equality $\nm{g}_{X_p}=\nm{h}_{\B}$. This  concludes the proof of part (i).
	
	In order to prove the part (ii), we observe that Lemma~\ref{lema: bloch admisible} and  Theorem~\ref{th: descomposicion pavlovic}  imply that  
	\begin{equation*}\label{eq: prop norma Xpinf}
		X_p(\infty,\om) = \ell^{\infty}_{s}(X_p,\{P_n\})
	\end{equation*}
	Now by Proposition~\ref{prop: estimacion norma poli}
	$$
	\nm{I^{\mu_q} P_n\ast g}_{X_p} \asymp K^{-(1+\frac{1}{q})n}\nm{P_n\ast g}_{X_p},\quad n\in \N\cup \{0\},\quad g\in \H(\D).
	$$
	Therefore, 
	\begin{equation*}
		\begin{split}
			\nm{g}_{X_p(\infty,\om)} &\asymp  \sup_{n\in \N\cup\{0\}} K^{-n} \nm{P_n\ast g}_{X_p} \asymp \sup_{n\in \N\cup\{0\}} K^{-n+n(1+\frac{1}{q})} \nm{I^{\mu_q} P_n\ast g}_{X_p} \\
			&=  \sup_{n\in \N\cup\{0\}} K^{\frac{n}{q}}\nm{P_n\ast I^{\mu_q} g}_{X_p} 
			= \nm{I^{\mu_q} g}_{\ell^{\infty}_{\frac{-s}{q}}(X_p,\{P_n\})},\quad g\in \H(\D),
		\end{split}
	\end{equation*}
	and hence $I^{\mu_q}: X_p(\infty,\om)\to \ell^{\infty}_{-\frac{s}{q}}(X_p,\{P_n\})$ is isomorphic and onto.  Joinig this assertion with \eqref{eq:c31} we obtain that $(\Apq{p}{q})^\ast \simeq X_p(\infty,\om)$ via the 
	$A^2_{\mu_q}$-pairing. By applying again Lemma~\ref{lema: bloch admisible} we obtain that $(\Apq{p}{q})^\ast \simeq \B(\infty,\om)$ under the pairing
	$$
	\langle f , g \rangle_{\om,p,q} = \lim_{r\to 1^-} \sum_{n=0}^{\infty}\widehat{f}(n)\overline{\widehat{g}(n)} \mu_{2n+1}\left (\nu_{\frac{1}{p}-2} \right )_{2n+1} r^n, \quad f,g\in \H(\D),
	$$
	with equivalence of norms, concluding the proof.
\end{proof}
\section{Boundedness of $P_\om$ and $P_\om^+$ in $L^{p,q}_\om$}
We begin this section proving that $P_\om$ is bounded on $L^{p,q}_\om$ whenever $1<p,q<\infty$ and $\om\in\DD$.

\begin{Prf} {\em{Theorem~\ref{coro: acotacion proyeccion}}.}
	For each $g\in L^{p,q}_\om$ let us   consider the functional
	$$
	L_g(f) = \langle f, g\rangle_{A^2_\om}, \quad f\in \Apq{p'}{q'}.
	$$
	By H\"older's inequality 
	$$
	\abs{L_g(f)} \le \int_0^1 M_p(r,g)M_{p'}(r,f)\om(r) dr \leq \nm{g}_{L^{p,q}_\om}\nm{f}_{\Apq{p'}{q'}},
	$$
	therefore $L_g\in (\Apq{p'}{q'})^\ast$ and $\nm{L_g}_{(\Apq{p'}{q'})^\ast}\le \nm{g}_{ L^{p,q}_\om}$. In addition,  since $L^{p,q}_\om \subset L^1_\om$, by \cite[2.3]{advances} 
	$$
	L_g(f) = \lim_{r\to 1^-} \langle f_r,g \rangle_{L^2_\om} = \lim_{r\to 1^-} \langle f_r,P_\om(g) \rangle_{L^2_\om}, \quad  f\in \Apq{p'}{q'}.
	$$
	On the other hand, by Theorem~\ref{th: dualidad general} (i.a) there exists $h\in \Apq{p}{q}$ such that 
	$$
	L_g(f) = \langle f, h\rangle_{A^2_\om}, \quad f\in \Apq{p'}{q'},
	$$ 
	and $\nm{L_g}_{(\Apq{p'}{q'})^\ast} \asymp \nm{h}_{\Apq{p}{q}}$.
	Then, for each $n\in\N\cup\{0\}$, $ \langle z^n, h\rangle_{A^2_\om}= \langle  z^n, P_\om(g) \rangle_{A^2_\om}$, which implies that
	$h=P_\om(g)$. Moreover,
	$$
	\nm{P_\om(g)}_{L^{p,q}_\om} = \nm{h}_{\Apq{p}{q}} \asymp \nm{L_g}_{(\Apq{p'}{q'})^\ast} \le \nm{g}_{L^{p,q}_\om},
	$$
	that is $P_\om$ is bounded on  $L^{p,q}_\om$ .
\end{Prf} 

\medskip

Now we will prove Proposition~\ref{pr:kernels-p=1}. With this aim  let us recall the following result.
\begin{letterproposition}\label{pr:cesaro}
	Let $k \in \N$, $k>1$ and $\Psi:\mathbb{R}\to\mathbb{R}$ be a $C^\infty$-function such that $\Psi\equiv 1$ on $(-\infty,1]$, $\Psi\equiv 0$ on $[k,\infty)$ and $\Psi$ is decreasing and positive on $(1,k)$. Set $\psi(t)=\Psi\left(\frac{t}{k}\right)-\Psi(t)$ for all $t\in\mathbb{R}$. Let $V_{0,k}(z)=\sum\limits_{j=0}^{k-1} \Psi(j) z^j$
	and
	\begin{equation*}\label{vnk}
		V_{n,k}(z) 
		=\sum_{j=0}^\infty
		\psi\left(\frac{j}{k^{n-1}}\right)z^j=\sum_{j=k^{n-1}}^{k^{n+1}-1}
		\psi\left(\frac{j}{k^{n-1}}\right)z^j,\quad n\in\N.
	\end{equation*}
	
	Then,
	\begin{equation}
		\label{propervn1}
		f(z)=\sum_{n=0}^\infty (V_{n,k}\ast f)(z),\quad z\in\D,\quad f\in\H(\D),
	\end{equation}
	and for each $0<p<\infty$ there exists a constant $C=C(p,\Psi,k)>0$ such that
	\begin{equation}
		\label{propervn2}
		\|V_{n,k}\ast f\|_{H^p}\le C\|f\|_{H^p},\quad f\in H^p, \quad n \in \N.
	\end{equation}
\end{letterproposition}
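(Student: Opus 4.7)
The approach combines the polynomial decomposition of Proposition~\ref{pr:cesaro} with the regularity of the moments $\om_x$ coming from $\om\in\DD$.

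\emph{Reduction.} Differentiating the expansion $B^\om_a(\zeta) = \sum_{m\ge 0}(\overline{a}\zeta)^m/(2\om_{2m+1})$ termwise yields
$$
(B^\om_a)^{(N)}(\zeta) = \overline{a}^N G(\overline{a}\zeta), \qquad G(w) = \sum_{m\ge 0}\frac{(m+N)!}{2\,m!\,\om_{2m+2N+1}}\,w^m.
$$
After a rotation, $M_1(r,(B^\om_a)^{(N)}) = |a|^N M_1(s, G)$ with $s:=r|a|$, so the task reduces to an estimate of $M_1(s, G)$. By Lemma~\ref{lemma: caract dgorro}(iii)--(v), the positive coefficients $\beta_m$ of $G$ satisfy $\beta_m \asymp m^N/\omg(1-1/(m+1))$, and they are essentially constant on any lacunary interval $I_n:=[k^{n-1},k^{n+1})$ for fixed $k>1$.

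\emph{Block estimates.} Fix $k\ge 2$ and apply Proposition~\ref{pr:cesaro} to obtain $G_s = \sum_{n\ge 0}V_{n,k}\ast G_s$, each summand a polynomial with Fourier support in $I_n$. The central technical claim is
$$
\|V_{n,k}\ast G_s\|_{H^1} \asymp A_n, \qquad A_n := \frac{k^{nN}\,s^{k^n}}{\omg(1-k^{-n})}, \qquad n\ge 0.
$$
The upper estimate follows by summation by parts against the smooth cutoff $\psi$ in the spirit of the proof of Proposition~\ref{prop: estimacion norma poli}, combined with the uniform $H^1$-boundedness of the Ces\`aro means (Theorem~\ref{th:cesaro}, applicable since $H^1$ is admissible). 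The matching lower bound exploits that $V_{n,k}\ast G_s$ is essentially the scalar $A_n$ times a dilate of the smoothed de~la~Vall\'ee-Poussin kernel $V_{n,k}$, whose $H^1$-norm is of unit order; the smoothness of $\psi$ is essential to avoid the logarithmic loss that a sharp cut-off would introduce via the Dirichlet kernel.

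\emph{Summation and integral comparison.} The factor $s^{k^n}$ is of order~$1$ for $n$ below the critical index $n_\ast$ determined by $k^{-n_\ast}\asymp 1-s$, and super-geometrically small beyond, while Lemma~\ref{lemma: caract dgorro}(iii) ensures that $\{k^{nN}/\omg(1-k^{-n})\}_n$ is geometrically increasing up to $n_\ast$. Hence $\sum_n A_n \asymp A_{n_\ast} \asymp (1-s)^{-N}/\omg(s)$; together with the trivial bound $M_1(s,G) \ge |G(0)| \asymp 1$, this produces $M_1(s,G)\asymp 1 + (1-s)^{-N}/\omg(s)$. A routine computation using only the $\DD$-regularity of $\omg$ gives
$$
1+\int_0^s\frac{dt}{(1-t)^{N+1}\omg(t)} \asymp 1+\frac{(1-s)^{-N}}{\omg(s)},
$$
and the claim follows after absorbing $|a|^N$ into the multiplicative constants.

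\emph{Main difficulty.} The hardest point is the lower bound in the block estimate $\|V_{n,k}\ast G_s\|_{H^1}\asymp A_n$: a sharp-cut-off polynomial with constant-modulus coefficients has an $H^1$-norm with a Dirichlet-kernel logarithmic loss, so one must crucially use the smoothness of $\psi$ and a direct comparison with $V_{n,k}$ itself. Once this block estimate is in hand, the summation and integral-comparison steps are straightforward.
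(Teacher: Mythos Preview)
Your proposal does not address the stated result. Proposition~\ref{pr:cesaro} is a purely harmonic-analytic statement: it asserts that the smooth dyadic blocks $V_{n,k}$ give a partition of the identity on $\H(\D)$ and act uniformly boundedly by convolution on every $H^p$. No weight $\om$, no Bergman kernel $B^\om_a$, and no integral $\int_0^{r|a|}\frac{dt}{(1-t)^{N+1}\omg(t)}$ appear in it. What you have written is a sketch of Proposition~\ref{pr:kernels-p=1}, and indeed you invoke Proposition~\ref{pr:cesaro} as a tool in your very first line. The paper itself does not prove Proposition~\ref{pr:cesaro}; it simply cites \cite[p.~143--144]{Pabook2} for $k=2$ and \cite[Proposition~4]{PeldelaRosa22} for general $k>1$.

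Even read as a proof of Proposition~\ref{pr:kernels-p=1}, your final ``routine computation'' is not routine and in fact fails for $N=0$. The claimed equivalence
\[
1+\int_0^s\frac{dt}{(1-t)^{N+1}\omg(t)}\;\asymp\;1+\frac{1}{(1-s)^{N}\omg(s)}
\]
holds for $N\ge 1$ (use that $\omg$ is decreasing for the upper bound and $\om\in\DD$ on a short interval near $s$ for the lower bound), but for $N=0$ the upper bound $\int_0^s\frac{dt}{(1-t)\omg(t)}\lesssim 1/\omg(s)$ is precisely the characterization of $\om\in\Dd$ in Lemma~\ref{caract. D check}, which is \emph{not} assumed here. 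Relatedly, your claim that $\{k^{nN}/\omg(1-k^{-n})\}_n$ is geometrically increasing breaks down when $N=0$ for $\om\in\DD\setminus\Dd$. The paper's proof of Proposition~\ref{pr:kernels-p=1} avoids this by first establishing the case $N\ge 1$ and then deducing $N=0$ by integrating the $N=1$ bound on $M_1(s,(B^\om_{|a|})')$; you would need an analogous extra step.
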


If $k=2$ we simply denote $V_{n,2}=V_n$. A
proof of Proposition~\ref{pr:cesaro} for this choice appears in
\cite[p. 143--144]{Pabook2}. A proof for any $k>1$ can be found in \cite[Proposition~4]{PeldelaRosa22}.

\begin{Prf}{\em{  Proposition~\ref{pr:kernels-p=1}}.}
	
	Since $M_{1}(r, (B^\om_a)^{(N)})=\| (B^\om_{|a|r})^{(N)}\|_{H^1}$, it is enough to prove that
	\begin{equation}\label{eq:s1}\| (B^\om_{|a|})^{(N)}\|_{H^1}\asymp 1+\int_{0}^{|a|}\frac{dt}{(1-t)^{N+1}\omg(t)},\quad a \in \D.
	\end{equation}
	
	Firstly, let us prove the inequality 
	$$\| (B^\om_{|a|})^{(N)}\|_{H^1}\lesssim 1+\int_{0}^{|a|}\frac{dt}{(1-t)^{N+1}\omg(t)},\quad a \in \D,$$
	for $N\ge 1$.
	Since   \begin{equation}\label{kernelformula}
		B^\om_a(z)=\sum_{n=0}^\infty\frac{(z\overline{a})^n}{2\om_n},
	\end{equation}
	we have that
	\begin{displaymath}
		\left(B^\om_a\right)^{(N)}(z)=\sum_{j=N}^\infty
		\frac{j(j-1)\cdots(j-N+1)z^{j-N}\overline{a}^{j}}{2\om_{2j+1}},\quad N\in\N.
	\end{displaymath}
	For any $g(z)=\sum_{n=0}^\infty \widehat{g}(n)z^n \in \H(\D)$, let us denote $D^\omega g(z)=\sum_{n=0}^\infty \frac{\widehat{g}(n)}{2\om_{2n+1}}z^n$.
	Therefore,  by Proposition~\ref{pr:cesaro} and  \cite[Lemma~3.1]{MatPavStu84} 
	\begin{equation}\label{eq:s2}
		\| (B^\om_{|a|})^{(N)}\|_{H^1}\le \sum_{n=0}^\infty \| V_n\ast (B^\om_{|a|})^{(N)}\|_{H^1}\lesssim 1+  \sum_{n=1}^\infty |a|^{2^{n-1}} \| D^\om  V_n*F^N\|_{H^1}, \quad a\in \D,
	\end{equation}
	where  $F^N(z)=\frac{d^{(N)}}{dz^N}\left( \frac{1}{1-z}\right)$.
	Now, we will prove that
	\begin{equation}\label{eq:s3}
		\|D^\om V_n*g\|_{H^1}\lesssim  \om_{2^{n+2}}^{-1}  \| V_n*g\|_{H^1}, \quad n \in \N, \, g\in \H(\D),
	\end{equation}
	where the constants involved do not depend on $n$ or on $g(z)=\sum_{j=0}^\infty \widehat{g}(j)z^j$. Let us denote $\widehat{G}(j)(z)=\sum_{k=0}^j \widehat{V_n}(j)\widehat{g}(j)z^j$. Hence a summation by parts and the fact that $\widehat{G}(j)=0$ for $j\le 2^{n-1}-1$ yields
	
	\begin{equation*}\begin{split}
			D^\om V_n*g=\sum_{j=2^{n-1}}^{2^{n+1}-2}\widehat{G}(j)\left( \frac{1}{\om_{2j+1}}- \frac{1}{\om_{2j+3}} \right)+  \frac{1}{\om_{2^{n+2}-1}}V_n*g.
	\end{split}\end{equation*}
	Next, bearing in mid that 
	$\widehat{G}(j) = (j+1)\sigma_j V_n*g-j\sigma_{j-1}V_n*g$, a new summation by parts gives 
	\begin{equation}\begin{split}\label{eq:s4}
			D^\om V_n*g &=\sum_{j=2^{n-1}}^{2^{n+1}-3}(j+1)\sigma_j V_n*g 
			\left[\left( \frac{1}{\om_{2j+1}}- \frac{1}{\om_{2j+3}} \right)- \left( \frac{1}{\om_{2j+3}}- \frac{1}{\om_{2j+5}}\right)\right]+ 
			\\ & + 2^{n+1-2}  \sigma_{2^{n+1}-2} V_n*g   \left( \frac{1}{ \om_{2^{n+2}-3} }- \frac{1}{ \om_{2^{n+2}-1} } \right)  +\frac{1}{\om_{2^{n+2}-1}}V_n*g.
	\end{split}\end{equation}

	Therefore, by Theorem~\ref{th:cesaro} (see also \cite{HL} for a proof for $X=H^p$, $\frac{1}{2}<p<\infty$) and Lemma~\ref{lemma: caract dgorro}(vi)
	\begin{equation*}\begin{split}
			& \left\|\sum_{j=2^{n-1}}^{2^{n+1}-3}(j+1)\sigma_j V_n*g 
			\left[\left( \frac{1}{\om_{2j+1}}- \frac{1}{\om_{2j+3}} \right)- \left( \frac{1}{\om_{2j+3}}- \frac{1}{\om_{2j+5}}\right)\right] \right\|_{H^1}
			\\
			&\le \| V_n*g\|_{H^1} \sum_{j=2^{n-1}}^{2^{n+1}-3}\left|(\om_{2j+1}^{-1}-\om_{2j+3}^{-1})-(\om_{2j+3}^{-1}-\om_{2j+5}^{-1})\right|(j+1)\\
			&=\| V_n*g\|_{H^1} \sum_{j=2^{n-1}}^{2^{n+1}-3} \left|\frac{\left(\om_{\{2\}}\right)_{2j+1}\om_{2j+5}-\left(\om_{\{1\}}\right)_{2j+3}\left(\left(\om_{\{1\}}\right)_{2j+1}+\left(\om_{\{1\}}\right)_{2j+3}\right)}
			{\om_{2j+1}\om_{2j+3}\om_{2j+5}}\right|(j+1)\\
			&\le \| V_n*g\|_{H^1}   \sum_{j=2^{n-1}}^{2^{n+1}-3}(j+1)
			\left(\frac{(\om_{\{2\}})_{2j+1}}{\om_{2j+1}\om_{2j+3}}+ 2\frac{(\om_{\{1\}})^2_{2j+1}}{\om_{2j+1}\om_{2j+3}\om_{2j+5}} \right)\\
			& \lesssim \| V_n*g\|_{H^1}   \sum_{j=2^{n-1}}^{2^{n+1}-3}\frac{1}{(j+1)\om_{2j+5}}
			\\ & \lesssim    \om_{2^{n+2}}^{-1}  \| V_n*g\|_{H^1}, n\in\N,
	\end{split}\end{equation*}
	and
	\begin{equation*}
		\begin{split}
			&\left\|   2^{n+1-2}  \sigma_{2^{n+1}-2} V_n*g   \left( \frac{1}{ \om_{2^{n+2}-3} }- \frac{1}{ \om_{2^{n+2}-1} } \right)  \right\|_{H^1}\\
			&\le \| V_n*g\|_{H^1} 2^{n+1-2}   \left( \frac{1}{ \om_{2^{n+2}-3} }- \frac{1}{ \om_{2^{n+2}-1} } \right) \\
			&\lesssim \om_{2^{n+2}}^{-1}  \| V_n*g\|_{H^1}, n\in\N,
		\end{split}
	\end{equation*}
	Consequently, joining the above two chain of inequalities together with \eqref{eq:s4}, we get \eqref{eq:s3}. Next, putting together \eqref{eq:s2},   \eqref{eq:s3},  \cite[Lemma 3.1]{MatPavStu84} and \eqref{propervn2} it follows that
	
	\begin{equation}\begin{split}\label{eq:s5}
			\| (B^\om_{|a|})^{(N)}\|_{H^1} &\lesssim  1+  \sum_{n=1}^\infty |a|^{2^{n-1}}\om_{2^{n+2}}^{-1}  \| V_n*F^N\|_{H^1},
			\\ & \asymp 1 +  \sum_{n=1}^\infty |a|^{2^{n-1}}\om_{2^{n+2}}^{-1} M_1 \left(1-\frac{1}{2^{n+1}},  V_n*F^N\right) 
			\\ & \lesssim 1 +  \sum_{n=1}^\infty |a|^{2^{n-1}}\om_{2^{n+2}}^{-1} M_1 \left(1-\frac{1}{2^{n+1}},  F^N\right) 
			\\ & \asymp 1 +  \sum_{n=1}^\infty 2^{nN}|a|^{2^{n-1}}\om_{2^{n+2}}^{-1},
			\quad a\in \D,
	\end{split}\end{equation}
	where in the last equivalence we use that $N\ge 1$.
	Now, by Lemma~\ref{lemma: caract dgorro}(v)
	\begin{equation}\begin{split}\label{eq:s6}
			\sum_{n=1}^\infty 2^{nN}|a|^{2^{n-1}}\om_{2^{n+2}}^{-1}
			& \asymp \sum_{n=1}^\infty 2^{n(N-1)}|a|^{2^{n-1}}\om_{2^{n+2}}^{-1} (2^n-2^{n-1})
			\\ & \lesssim \sum_{j=1}^\infty j^{N-1}|a|^j \om_{2j}^{-1}, 
			\quad a\in \D. 
	\end{split}\end{equation}
	
	It is clear that it is enough to prove \eqref{eq:s1} for $|a|\ge \frac{3}{4}$. Therefore, take $a \in \D$ with  $|a|\ge \frac{3}{4}$ and choose $N^\star\in \N$ such that $1-\frac{1}{N^\star}\le |a|<1-\frac{1}{N^\star+1}.$
	Then,  by Lemma~\ref{lemma: caract dgorro}(iii) and (iv)
	\begin{equation}\begin{split}\label{eq:s7}
			\sum_{j=1}^{N^\star} j^{N-1}|a |^j \om^{-1}_{2j} & \asymp  \sum_{j=1}^{N^\star} \frac{ j^{N-1}|a|^j }{\omg\left(1-j^{-1} \right)}
			\\ & \lesssim 1+\int_{1}^{N^\star} \frac{x^{N-1}}{\omg(1-x^{-1})}\,dx
			\\ & \asymp 1+\int_{0}^{1-\frac{1}{N^\star}}\frac{dt}{(1-t)^{N+1}\omg(t)}
			\\ & \le  1+\int_{0}^{|a|}\frac{dt}{(1-t)^{N+1}\omg(t)},\quad  |a|\ge \frac{3}{4}.
	\end{split}\end{equation}
	Now  by Lemma~\ref{lemma: caract dgorro}(ii) there exists $\b>0$ such that $(1-r)^{-\beta}\omg(r)$ is an essentially increasing function. Therefore
	
	\begin{equation}\begin{split}\label{eq:s8}
			\sum_{j=N^\star+1}^\infty j^{N-1}|a|^j \om^{-1}_{2j} & \asymp  \ \sum_{j=N^\star+1}^\infty \frac{ j^{N+\b-1}|a|^j }{j^\beta\omg\left(1-j^{-1} \right)}
			\\ & \lesssim \frac{1}{(N^\star+1)^\b \omg\left(1-(N^\star+1)^{-1} \right)} \sum_{j=N^\star+1}^\infty  j^{N+\b-1}|a|^j
			\\ & \lesssim \frac{1}{(N^\star+1)^\b \omg\left(1-(N^\star+1)^{-1} \right)} \frac{1}{(1-|a|)^{N+\beta}}
			\\ & \asymp \frac{1}{(1-|a|)^{N}\omg(|a|)}
			\\ & \lesssim \int_{2|a|-1}^{|a|}\frac{dt}{(1-t)^{N+1}\omg(t)}\le \int_{0}^{|a|}\frac{dt}{(1-t)^{N+1}\omg(t)},
			\quad  |a|\ge \frac{3}{4}.
	\end{split}\end{equation}
	Therefore joining \eqref{eq:s5},   \eqref{eq:s6},  \eqref{eq:s7} and  \eqref{eq:s8} we get the inequality
	\begin{equation}\label{eq:s9}
		\| (B^\om_{|a|})^{(N)}\|_{H^1}\lesssim 1+\int_{0}^{|a|}\frac{dt}{(1-t)^{N+1}\omg(t)},\quad a \in \D, N\ge 1.
	\end{equation}
	In particular,
	we have that 
	\begin{equation*}\begin{split}
			M_1(s, (B^\om_{|a|})') & \lesssim 1+\int_{0}^{s|a|}\frac{dt}{(1-t)^{2}\omg(t)}
			\\ & =  1+\int_{0}^{s}\frac{|a|dt}{(1-|a|t)^{2}\omg(|a|t)}\lesssim \frac{1}{(1-|a|s)\omg(|a|s)},\quad 0\le s<1,\quad a \in \D.
	\end{split}\end{equation*}
	Therefore
	\begin{equation*}\begin{split}
			M_1(r, (B^\om_{|a|}) )  & \lesssim 1+ \int_{0}^{r} M_1(s, (B^\om_{|a|})')\,ds \lesssim  1+ \int_{0}^{r}  \frac{ds}{(1-|a|s)\omg(|a|s)}
			\\ & \asymp  \frac{1}{|a|}\int_{0}^{r|a|}  \frac{ds}{(1-s)\omg(s)}\lesssim \int_{0}^{r|a|}  \frac{ds}{(1-s)\omg(s)}  ,\quad 0<r<1,\, \, |a| \ge 1/2,
	\end{split}\end{equation*}
	which implies \eqref{eq:s9} for $N=0$.

	\par As for the reverse ineqaulity in \eqref{eq:s1}, observe that 
	using the classical Hardy's inequality \cite[p. 48]{Duren} it follows that
	$$ \| (B^\om_{|a|})^{(N)}\|_{H^1} \gtrsim  1+ \sum_{j=N}^\infty \frac{|a|^j j^{N-1}}{\om_{2j+1}},$$
	therefore a similar argument to \eqref{eq:s7}-\eqref{eq:s8} gives that 
	$$ \| (B^\om_{|a|})^{(N)}\|_{H^1} \gtrsim 1+\int_{0}^{|a|}\frac{dt}{(1-t)^{N+1}\omg(t)}, \quad a \in \D, N\in \N\cup\{0\},$$
	see  \cite[Lemma~10]{PelRatproj} for further details. Consequently, \eqref{eq:s1} holds and 
	this finishes the proof.
	
\end{Prf}

Now we are ready to prove Theorem~\ref{th: acotacion P+}.

\begin{Prf}{\em{Theorem~\ref{th: acotacion P+}.}}
	Assume that $P_\om^+:L^{p,q}_\om \to L^{p,q}_\om$ is bounded. For every $0< t <1$ consider the test functions $f_t(z)=t^{-1/q}\chi_{\D\setminus D(0,t)}$ and denote $J_\om(t)=\int_0^t\frac{ds}{\omg(s)(1-s)}$ for all $0\leq t< 1$. Bearing in mind that  $\nm{f_t}_{L^{p,q}_\om}^q = \omg(t)$ and Proposition~\ref{pr:kernels-p=1}
	\begin{equation*}
		\begin{split}
			\omg(t)\nm{P_\om^+}_{L^{p,q}_\om \to L^{p,q}_\om}^q &\geq \int_0^1\left (\frac{1}{2\pi}\int_0^{2\pi}\left (\int_\D f_t(z)\abs{B^\om_{re^{i\t}}(z)}\om(z)dA(z) \right)^p d\t \right )^{\frac{q}{p}}\, r\om(r)dr \\
			&= \int_0^1\left (\frac{1}{2\pi}\int_0^{2\pi}\left (\int_0^1 f_t(s) M_1(s,B^\om_{re^{i\t}}) s\om(s)\,ds \right)^p d\t \right )^{\frac{q}{p}}\, r\om(r)dr \\
			&\asymp \int_0^1\left (\frac{1}{2\pi}\int_0^{2\pi}\left (\int_0^1 f_t(s) (J_\om(sr)+1) s\om(s)\,ds \right)^p d\t \right )^{\frac{q}{p}}\, r\om(r)dr \\
			&= \int_0^1\left (\int_0^1 f_t(s) (J_\om(sr)+1) s\om(s) ds \right )^q \, r\om(r)dr \\
			&\geq \int_0^1\left (\int_r^1 f_t(s) (J_\om(sr)+1) s\om(s) ds \right )^q \, r\om(r)dr \\
			&\geq \int_0^1\left (\int_r^1 f_t(s) s\om(s)ds \right )^q (J_\om(r^2)+1)^q\, r\om(r)dr \\
			&\geq \int_0^t \left (\int_r^1 f_t(s) s\om(s)ds \right )^q (J_\om(r^2)+1)^q\, r\om(r)dr \\
			&\gtrsim \left (\int_{1/2}^t (J_\om(r^2)+1)^q \om(r)dr \right ) \omg(t)^q \\
			&\asymp \left (\int_{1/2}^t (J_\om(r)+1)^q \om(r)dr \right ) \omg(t)^q,
		\end{split}
	\end{equation*}
	where
	the last step follows from Lemma~\ref{lemma: caract dgorro} (ii). Therefore, 
	$$
	\sup_{1/2\le t<1} \left (\int_{1/2}^t (J_\om(r)+1)^q\om(r) \right )^{\frac{1}{q}}\omg(t)^{\frac{1}{q'}} < \infty.
	$$
	Consequently, arguing as in the
	the proof of \cite[Theorem 9]{advances},    $\om\in \Dd$. \\
	Reciprocally assume that $\om\in\Dd$. By
	Minkowski's inequality,
	\begin{equation*}
		\begin{split}
			\frac{1}{2}M_p(r,P_\om^+f) &= \left (\frac{1}{2\pi}\int_0^{2\pi}\left | \int_0^1\left ( \frac{1}{2\pi}\int_0^{2\pi}f(se^{i\t})\abs{B^\om_{se^{i\t}}(re^{it})}d\t\right )s\om(s)ds \right |^p dt \right  )^{\frac{1}{p}} \\
			&\leq \int_0^1 \left ( \frac{1}{2\pi}\int_0^{2\pi} \left (\frac{1}{2\pi}\int_0^{2\pi}\abs{f(se^{i\t})}\abs{B^\om_{se^{i\t}}(re^{it})} d\t\right )^p dt\right )^{\frac{1}{p}} s\om(s) ds.
		\end{split}
	\end{equation*}
	Next, by  H\"older's inequality and Fubini's theorem
	\begin{equation*}
		\begin{split}
			&\frac{1}{2\pi}\int_0^{2\pi} \left (\frac{1}{2\pi}\int_0^{2\pi}\abs{f(se^{i\t})}\abs{B^\om_{se^{i\t}}(re^{it})} d\t \right )^p dt \\ 
			\leq &\frac{1}{2\pi}\int_0^{2\pi}\left ( \frac{1}{2\pi}\int_0^{2\pi}\abs{f(se^{i\t})}^p\abs{B^\om_{se^{i\t}}(re^{it})}d\t\right )\left ( \frac{1}{2\pi} \int_0^{2\pi} \abs{B^\om_{se^{i\t}}(re^{it})} d\t\right )^{\frac{p}{p'}}dt \\
			= &M_1^{\frac{p}{p'}}(s,B^\om_r) \frac{1}{2\pi}\int_0^{2\pi}\left ( \frac{1}{2\pi}\int_0^{2\pi}\abs{f(se^{i\t})}^p\abs{B^\om_{se^{i\t}}(re^{it})}d\t\right ) dt 
			=  M_1^{p}(s,B^\om_r)M_p^p(r,f).
		\end{split}
	\end{equation*}
	Therefore,
	$$
	M_p(r,P_\om^+f) \leq \int_0^1 M_p(s,f)M_1(s,B^\om_r) s\om(s) ds.
	$$
	Next consider the auxiliary radial function $h(r) = \omg(r)^{-\frac{1}{qq'}}$. Then,  by  H\"older's inequality
	\begin{equation*}
		\begin{split}
			\nm{P_\om^+f}_{L^{p,q}_\om}^q &\leq \int_0^1 \left ( \int_0^1 M_p(s,f)h(s)^{-1}h(s)M_1(s,B^\om_r) s\om(s)ds \right )^q r \om(r)dr \\
			&\leq \int_0^1 \left ( \int_0^1 M_p^q(s,f)h(s)^{-q} M_1(B^\om_r,s) s \om(s) ds \right )\left ( \int_0^1 h(s)^{q'}M_1(B^\om_r,s) s\om(s)ds\right )^{\frac{q}{q'}} r\om(r)dr.
		\end{split}
	\end{equation*}
	Using Proposition~\ref{pr:kernels-p=1} and Lemma~\ref{caract. D check} (ii) 
	\begin{equation*}\label{eq: estimacion test schur 1}
		\begin{split}
			\int_0^1 h(s)^{q'}M_1(B^\om_r,s) s\om(s)ds &\lesssim \int_0^1 \frac{\om(s)}{\omg(s)^{\frac{1}{q}} }\left ( 1+\int_0^{rs}\frac{dt}{\omg(t)(1-t)}dt\right )  ds \\
			&\asymp \int_0^1 \frac{\om(s)}{\omg(s)^{\frac{1}{q}} \omg(rs)}ds \\
			&\leq \int_0^r \frac{\om(s)}{\omg(s)^{1+\frac{1}{q}}}ds + \frac{1}{\omg(r)}\int_r^1 \frac{\om(s)}{\omg(s)^{\frac{1}{q}}}ds  \\ 
			&\lesssim \frac{1}{\omg(r)^\frac{1}{q}} = h(r)^{q'},\quad 0\leq r < 1.
		\end{split}
	\end{equation*}
	And analogously it can be proved that
	$$
	\int_0^1 h(r)^{q}M_1(B^\om_s,r) r\om(r)dr \lesssim h(s)^q, \quad 0\leq s < 1.
	$$
	Using both inequalities  and Fubini's Theorem
	\begin{equation*}
		\begin{split}
			\nm{P_\om^+f}_{L^{p,q}_\om}^q &\lesssim \int_0^1 h(r)^{q} \left ( \int_0^1M_p^q(s,f)h(s)^{-q}M_1(B^\om_r,s) s \om(s)ds \right ) r\om(r) dr \\
			&= \int_0^1 M_p^q(s,f)h(s)^{-q} \left ( \int_0^1 h(r)^{q}M_1(B^\om_s,r)\om(r)dr \right ) s \om(s) ds \\
			&\lesssim \int_0^1 M_p^q(s,f) s\om(s) ds = \nm{f}_{L^{p,q}_\om}^q.
		\end{split}	
	\end{equation*}
	This finishes the proof.
\end{Prf}

\end{document}